\documentclass[a4paper,11pt,reqno]{amsart}

\usepackage{amsmath,amsfonts,amsthm,amssymb,ascmac,amscd}
\usepackage{bm}
\usepackage{graphicx}
\usepackage{enumerate}
\usepackage{overpic}
\usepackage{mathtools}
\usepackage{hyperref} 
\usepackage{xcolor}
\hypersetup{
bookmarksnumbered=true, 
colorlinks=true, 
citecolor=black,
linkcolor=black,
urlcolor=black,
}
\numberwithin{equation}{section}
\usepackage{cleveref}
\crefname{def}{Definition}{Definitions}
\crefname{theorem}{Theorem}{Theorems}
\crefname{lemma}{Lemma}{Lemmas}
\crefname{corollary}{Corollary}{Corollaries}
\crefname{proposition}{Proposition}{Propositions}
\crefname{claim}{Claim}{Claims}
\crefname{example}{Example}{Examples}
\crefname{remark}{Remark}{Remarks}
\crefname{question}{Question}{Questions}
\crefname{equation}{}{}
\crefname{figure}{Figure}{Figures}
\theoremstyle{definition}
\newtheorem{theorem}{Theorem}[section]
\newtheorem*{theorem*}{Theorem}
\newtheorem{definition}[theorem]{Definition}
\newtheorem*{definition*}{Definition}
\newtheorem{proposition}[theorem]{Proposition}
\newtheorem*{proposition*}{Proposition}

\newtheorem*{example*}{Example}
\newtheorem{remark}[theorem]{Remark}
\newtheorem*{remark*}{Remark}

\newtheorem*{recall*}{Recall}
\newtheorem{lemma}[theorem]{Lemma}
\newtheorem*{lemma*}{Lemma}
\newtheorem{corollary}[theorem]{Corollary}
\newtheorem*{corollary*}{Corollary}

\newtheorem*{question*}{Question}

\newtheorem*{conjecture*}{Conjecture}

\newtheorem*{exercise*}{Exercise}

\newtheorem*{claim*}{Claim}

\newtheorem*{fact*}{Fact}
\newtheorem{theorema}{Theorem}

\newtheorem{corollarya}[theorema]{Corollary}
\newcommand{\Z}{\mathbb{Z}}
\newcommand{\R}{\mathbb{R}}
\newcommand{\C}{\mathbb{C}}

\DeclareMathOperator{\id}{id}

\let\Im\relax
\DeclareMathOperator{\Im}{Im}
\DeclareMathOperator{\pr}{pr}

\DeclareMathOperator{\ord}{ord}
\newcommand{\Teich}{\mathcal{T}}
\newcommand{\QD}{\mathrm{QD}}

\DeclareMathOperator{\Res}{Res}

\allowdisplaybreaks[4]
\title[Degeneration of hyperbolic surfaces with boundary]{Uniform Degeneration of hyperbolic surfaces with boundary along harmonic map rays}
\author{Kento Sakai}
\address{Graduate~School~of~Science, Osaka~University}
\email{u741819k@ecs.osaka-u.ac.jp}
\date{\today}
\begin{document}

\begin{abstract}
    We study the degeneration of hyperbolic surfaces along a ray given by the harmonic map parametrization of Teichmüller space.
    The direction of the ray is determined by a holomorphic quadratic differential on a punctured Riemann surface, which has poles of order $\geq 2$ at each puncture.
    We show that the rescaled distance functions of the universal covers of hyperbolic surfaces uniformly converge, on a certain non-compact region containing a fundamental domain, to the intersection number with the vertical measured foliation given by the holomorphic quadratic differential determining the direction of the ray.
    
    This implies that hyperbolic surfaces along the ray converge to the dual $\R$-tree of the vertical measured foliation in the sense of Gromov-Hausdorff.
    As an application, we determine the limit of the hyperbolic surfaces in the Thurston boundary.
\end{abstract}
\maketitle

\section{Introduction}

Let $S$ be a connected and oriented surface.
We suppose that $S$ admits a complete, finite area hyperbolic structure with geodesic boundary.
The \textit{Teichmüller space} of $S$ is the space of marked hyperbolic structures on $S$.
There are many kinds of parametrizations for the Teichmüller space (for example, Fenchel-Nielsen coordinates, Teichmüller's parametrization, the earthquake parametrization, the Fricke coordinate, and so on), and each of them demonstrates geometric properties of Teichmüller spaces.
In this paper, we are interested in the parametrization by the space of holomorphic quadratic differentials via harmonic maps between hyperbolic surfaces.

We here review the history of the parametrization given by harmonic maps.
If the surface is closed, 
a harmonic diffeomorphism uniquely exists in the homotopy class of a homeomorphism \cite{eells1964harmonic,hartman1967homotopic,schoen1978harmonic,sampson1978harmonic}.
Based on this result, Wolf \cite{wolf1989harmonic} proved that the Teichmüller space is homeomorphic to the vector space of holomorphic quadratic differentials on a Riemann surface by taking the associated Hopf differentials.
Wolf's parametrization of Teichmüller space was also found independently and by different means by Hitchin \cite{hitchin1987selfduality} and Wan \cite{wan1992constantmeancurvature}.
For hyperbolic surfaces with cusps, Lohkamp \cite{lohkamp1991harmonic} established an analogous parametrization by the space of holomorphic quadratic differentials with poles of order at least $1$.
For surfaces with closed boundaries, the parametrization is provided with the space of holomorphic quadratic differentials with poles of order 2 and fixed residues.
Gupta \cite{gupta2021harmonic} similarly parametrized the Teichmüller space for hyperbolic surfaces with crown ends using the space of holomorphic quadratic differentials with poles of order at least 3 and fixed principal parts.
Recently, Allegretti \cite{allegretti2021stability} addressed surfaces with punctures and boundaries of both types, and some exceptional types of surfaces, which had never been dealt with.

\begin{figure}[t]
   \centering
   \begin{overpic}[scale=0.6]{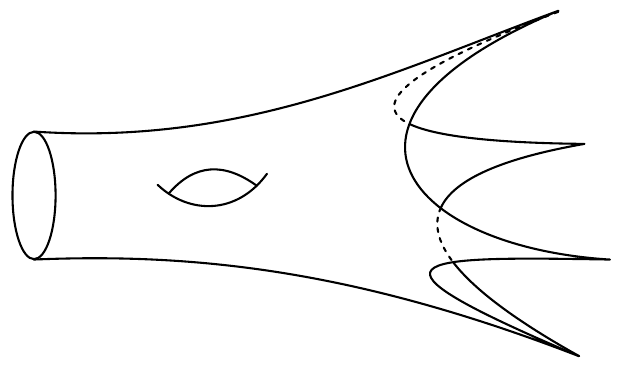}
   \end{overpic}
   \caption{Hyperbolic surface with a closed boundary (left side) and a crown end (right side).}
   \label{figure:HyperbolicSurface}
\end{figure}

In the case that a hyperbolic surface is closed or with cusps,
the interesting fact of this parametrization is that, if we deform a hyperbolic surface along a ray from the origin in the vector space of holomorphic quadratic differentials on a Riemann surface, these hyperbolic surfaces converge to a point in the Thurston boundary \cite{wolf1989harmonic,sakai2023compactification}.
Moreover, the limit point is the projective class of the vertical measured foliation determining the direction of the ray.
In terms of metric geometry, Wolf studied the degenerations of hyperbolic structures along a ray, by realizing them as metric spaces via harmonic maps.
As he mentioned in \cite{wolf1995rtree}, the distance functions of the metric lifted to the universal covering of the hyperbolic surfaces uniformly converge on compact sets.
Wolf also described degenerations of hyperbolic structures by using the notion of the equivariant Gromov-Hausdorff convergence, which was introduced by Paulin \cite{paulin1988rtree}. From these results, Wolf recovered the result of Morgan-Shalen \cite{morgan1984degeneration} in the case of $2$-dimensional hyperbolic manifolds.

In this paper, we consider the degeneration of hyperbolic surfaces with closed boundaries or crown ends (see \Cref{figure:HyperbolicSurface}). 
For the sake of simplicity, first we explain our result in the case that  $S$ is an ideal $k$-gon for $k\geq 3$.
In this case, we deal with harmonic maps from the complex plane $\C\cong \C P^1\setminus\{\infty\}$ to the hyperbolic plane $\mathbb{H}^2$ with polynomial Hopf differentials.
Harmonic maps of this type were first studied in \cite{han1995harmonic} where it was shown that their images form ideal polygons.
Let $\varphi$ be a polynomial quadratic differential of degree $k-2$ on $\C$, which has a pole of order $k+2$ at $\infty \in \C P^1$.
Then, there exists a unique marked hyperbolic ideal $k$-gon $[C_\varphi,f_\varphi] \in \Teich(S)$ satisfying the following condition: there exists a harmonic diffeomorphism $h_\varphi\colon \C\to C_\varphi$ such that the \textit{Hopf differential} of $h_\varphi$ is equal to $\varphi$,
where the Hopf differential of $h_\varphi$ is the $(2,0)$-part of the pullback metric ${h_\varphi}^\ast C_\varphi$.
Therefore, for each $t>0$, we obtain the hyperbolic surface $[C_{t\varphi},f_{t\varphi}]$ and the harmonic map $h_{t\varphi}$ corresponding to $t\varphi$.
The one-parameter family $\{[C_{t\varphi},f_{t\varphi}]\}_{t>0}$ of the hyperbolic surfaces is called the \textit{harmonic map ray} in the direction of $\varphi$.
The pullback metric ${h_{t\varphi}}^\ast C_{t\varphi}$ rescaled by $\frac1{4t}$ defines a distance function on $\C$.
For each $t>0$, let $d_t$ denote the distance function.
On the other hand, another metric space is obtained from $\varphi$.
Let $T(\varphi)$ denote the leaf space of the vertical measured foliation $\mathcal{F}_{\varphi,\mathrm{vert}}$.
The transverse measure of $\mathcal{F}_{\varphi,\mathrm{vert}}$ defines a distance $d_{T(\varphi)}$ on $T(\varphi)$.
The metric space $(T(\varphi),d_{T(\varphi)})$ is called the \textit{dual $\R$-tree} to $\mathcal{F}_{\varphi,\mathrm{vert}}$.
If $\varphi$ is a polynomial quadratic differential on $\C$, it is a simplicial metric tree.
Our main result in the case of an ideal $k$-gon is the following.
\begin{theorema}[\Cref{corollary:GromovHausdorff}]
    The rescaled metric spaces $(\C,d_t)$ uniformly converge to the simplicial metric tree $(T(\varphi),d_{T(\varphi)})$ in the sense of Gromov-Hausdorff as $t$ tends to $\infty$.
\end{theorema}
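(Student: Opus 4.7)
\emph{Proof plan.} The plan is to deduce the Gromov--Hausdorff statement from the main technical theorem of the paper, namely the uniform convergence of the rescaled distance functions $d_t(x,y)$ to the intersection number $i(\mathcal{F}_{\varphi,\mathrm{vert}}, \overline{xy})$ on regions exhausting $\C$. First I would recall the natural projection $\pi \colon \C \to T(\varphi)$ that collapses each leaf of the vertical measured foliation to a point. By definition of the dual tree distance, $d_{T(\varphi)}(\pi(x), \pi(y))$ equals the infimum of the transverse $\mathcal{F}_{\varphi,\mathrm{vert}}$-measure over all paths from $x$ to $y$. For polynomial $\varphi$ the vertical foliation has only finitely many singularities and decomposes $\C$ into a finite union of horizontal strips, so this infimum is realized by a horizontal-plus-saddle-connection path in the natural $\varphi$-coordinates and equals $i(\mathcal{F}_{\varphi,\mathrm{vert}}, \overline{xy})$. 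Thus the main theorem rephrases as the uniform convergence $d_t(x,y) \to d_{T(\varphi)}(\pi(x), \pi(y))$.

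Next I would verify the pointed Gromov--Hausdorff criterion through $\pi$. Fix $R, \varepsilon > 0$. Using the uniform convergence above together with an a priori comparison between $d_t$ and the flat $|\varphi|^{1/2}$-metric, I would choose $T_0 = T_0(R, \varepsilon)$ such that for all $t \geq T_0$ and all $x, y$ lying in some region $\Omega_{R,t} \subset \C$ containing the $d_t$-ball $B_{d_t}(0, R)$, the inequality $|d_t(x,y) - d_{T(\varphi)}(\pi(x), \pi(y))| < \varepsilon$ holds. Since $\pi$ is surjective, its restriction to $\Omega_{R,t}$ defines a correspondence whose image covers the $d_{T(\varphi)}$-ball of radius $R + \varepsilon$ around $\pi(0)$, making it an $\varepsilon$-approximation between the respective balls. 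Letting $\varepsilon \to 0$ and $R \to \infty$ yields pointed Gromov--Hausdorff convergence, which is the appropriate notion given the non-compactness of both spaces.

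The main obstacle lies in the uniformity required in the second step. Because $d_t$ rescales the underlying $|\varphi|^{1/2}$-metric and distances contract as $t \to \infty$, a $d_t$-ball of fixed radius $R$ may correspond to a region in $\C$ whose $|\varphi|^{1/2}$-diameter grows with $t$, so one must guarantee that the region $\Omega_{R,t}$ on which the central theorem provides uniform convergence absorbs this growth. This reduces to a coarse lower bound $d_t(x,y) \gtrsim \sqrt{t}\,|\varphi|^{1/2}\text{-distance} - o(\sqrt{t})$ outside a controlled neighborhood of the zeros, which should be extracted from the energy density and Hopf differential estimates that underpin the main theorem. Once this comparison is in place, the Gromov--Hausdorff conclusion follows formally from the reformulation in the first paragraph.
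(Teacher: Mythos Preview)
Your overall strategy---use the collapsing map $\pr_\varphi\colon\C\to T(\varphi)$ as an $\varepsilon$-approximation---is exactly what the paper does. But you have misread the scope of the input: in the ideal polygon case the paper's Theorem~3.1 asserts that $d_t$ converges to $I_\varphi=d_{T(\varphi)}\circ\operatorname{Pr}_\varphi$ \emph{uniformly on all of $\C$}, not merely on an exhausting sequence of regions. Once you have
\[
|d_t(x_1,x_2)-d_{T(\varphi)}(\pr_\varphi x_1,\pr_\varphi x_2)|<\varepsilon\qquad\text{for all }x_1,x_2\in\C,
\]
the relation $R=\{(x,\pr_\varphi x):x\in\C\}$ is a surjective $\varepsilon$-relation in the global (not pointed) Gromov--Hausdorff sense, and the corollary is immediate. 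No pointed version, no choice of $\Omega_{R,t}$, and no comparison of $d_t$-balls with $|\varphi|$-balls is needed.

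Your final paragraph therefore invents an obstacle that is not there, and the proposed remedy is in fact false: the bound $d_t(x,y)\gtrsim\sqrt{t}\cdot d_{|\varphi|}(x,y)-o(\sqrt{t})$ cannot hold, since $d_t(x,y)\to I_\varphi(x,y)<\infty$ while the right-hand side would diverge. The correct lower bound, used in the paper, is simply $d_t\ge I_\varphi$ for every $t$ (Lemma~3.3), which comes from the expression of $g_t$ in natural coordinates. The global uniform upper bound $d_t\le I_\varphi+\varepsilon$ is where the real work lies, and it relies on the exponential decay estimates for $G(t)$ along infinite horizontal and vertical rays into the single pole at $\infty$ (Lemma~3.4); this is precisely what allows the argument to escape compact sets.
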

This theorem is obtained by showing that, for every $\varepsilon>0$ and every sufficiently large $t>0$, the projection map $\pr_\varphi\colon \C\to T(\varphi)$ to the leaf space defines an \textit{$\varepsilon$-approximation} between $(\C,d_t)$ and $(T(\varphi),d_{T(\varphi)})$, namely, for every $x_1,x_2\in \C$, $|d_t(x_1,x_2)-d_{T(\varphi)}(\pr_\varphi x_1,\pr_\varphi x_2)|<\varepsilon$.
In other words, the distance functions $d_t$ uniformly converge to $d_{T(\varphi)}\circ \operatorname{Pr}_\varphi$ on the complex plane $\C$, which is a non-compact domain.
Here, we define the map $\operatorname{Pr}_\varphi$ by $\operatorname{Pr}_\varphi(x_1,x_2)=(\pr_\varphi x_1,\pr_\varphi x_2)$.

Next, we present the general case.
Let $X$ be a closed Riemann surface of genus $g$ with $n$ punctures and let $\varphi$ be a holomorphic quadratic differential on $X$ with poles only at the punctures.
We suppose that the order of $\varphi$ at each puncture is at least $2$.
The orders of $\varphi$ at punctures determine a topological surface $S$ with closed boundaries or crown ends. 
Then, there exists a unique hyperbolic structure $[C_\varphi,f_\varphi]\in\mathcal{T}(S)$ satisfying the same condition as the ideal polygon case. Thus, having the hyperbolic surface $[C_{t\varphi},f_{t\varphi}]$ associated with $t\varphi$ for each $t>0$, we obtain the harmonic map ray $\{[C_{t\varphi},f_{t\varphi}]\}$.

To describe the limit of a harmonic map ray, we take the universal covering $\pi \colon \widetilde{X}\to X$.
Since $\widetilde{X}$ is conformally equivalent to the hyperbolic plane $\mathbb{H}$ (or the Euclidean plane $\mathbb{E}$), we identify $\widetilde{X}$ with $\mathbb{H}$ (or $\mathbb{E}$).
Let $\Gamma$ be the discrete free subgroup of the orientation-preserving isometry group $\operatorname{Isom}^+(\mathbb{H})$ of the hyperbolic plane (or $\operatorname{Isom}^+(\mathbb{E})$ of the Euclidean plane) with $X=\widetilde{X}/\Gamma$.
Taking the lift of ${h_{t\varphi}}^\ast C_{t\varphi}$ to $\widetilde{X}$ and rescaling the metric by $\frac1{4t}$, we get the distance function $d_t\colon\widetilde{X}\times\widetilde{X}\to \R_{\geq 0} $ from the rescaled metric on $\widetilde{X}$, which is invariant under the action of $\Gamma$.
To state our main theorem, we consider the hyperbolic plane (or the Euclidean plane) with the Gromov boundary $\partial_\infty \Gamma$ of $\Gamma$ attached in an equivariant manner.
We denote this union with a natural topology by $\widetilde{X}\cup \partial_\infty \Gamma$.
Its formal definition is introduced in \Cref{section:HyperbolicPlaneWithTheEndsOfTheCayleyGraph}.
Let $T(\varphi)$ be the dual $\R$-tree to the vertical measured foliation $\mathcal{F}_{\widetilde{\varphi},\mathrm{vert}}$ of $\widetilde{\varphi}$.
The transverse measure of $\mathcal{F}_{\widetilde{\varphi},\mathrm{vert}}$ defines a distance on $T(\varphi)$.
We define a map $\pr_\varphi\colon \widetilde{X}\to T(\varphi)$ as the projection to the leaf space of $\mathcal{F}_{\widetilde{\varphi},\mathrm{vert}}$.
\begin{theorema}[\Cref{theorem:GeneralCases}]\label{theoremB}
    Let $U\subset \widetilde{X}\cup \partial_\infty \Gamma$ be a neighborhood of $\partial_\infty {\Gamma}$.
    The sequence of the rescaled metric spaces $(\widetilde{X}\setminus U,d_t)$ uniformly converges to $(\pr_\varphi(\widetilde{X} \setminus U), d_{T(\varphi)})$ in the sense of Gromov-Hausdorff as $t$ tends to $\infty$.
\end{theorema}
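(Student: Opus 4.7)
The plan is to exhibit the projection $\pr_\varphi$ as a family of $\varepsilon_t$-approximations, with $\varepsilon_t\to 0$, between $(\widetilde{X}\setminus U,d_t)$ and $(\pr_\varphi(\widetilde{X}\setminus U),d_{T(\varphi)})$. Concretely, I aim to show that for every $\varepsilon>0$ there exists $t_0>0$ such that
$$
|d_t(x_1,x_2)-d_{T(\varphi)}(\pr_\varphi x_1,\pr_\varphi x_2)|<\varepsilon
$$
for every $t\geq t_0$ and every $x_1,x_2\in \widetilde{X}\setminus U$. Since $\pr_\varphi$ is tautologically surjective onto $\pr_\varphi(\widetilde{X}\setminus U)$, this estimate implies the Gromov--Hausdorff convergence.

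The first reduction I would make uses $\Gamma$-invariance. The action of $\Gamma$ on $\widetilde{X}$ induces an isometric action on $(T(\varphi),d_{T(\varphi)})$ that makes $\pr_\varphi$ equivariant, and $d_t$ is $\Gamma$-invariant by construction, so the error above is $\Gamma$-invariant in $(x_1,x_2)$. Picking a fundamental domain $F$ for $\Gamma$, I may therefore assume $x_1\in F$; and since $U$ is a neighborhood of $\partial_\infty\Gamma$ in $\widetilde{X}\cup\partial_\infty\Gamma$, the set $\overline{F\cap(\widetilde{X}\setminus U)}$ is a compact subset of $\widetilde{X}$, so $x_1$ varies over a compact region.

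Next I would handle $x_2$ by splitting $\widetilde{X}$ into a thick part projecting compactly to $X$ and lifts of punctured-disc neighborhoods of the punctures; modulo $\Gamma$ there are only finitely many such end-types. Near each puncture at which $\varphi$ has a pole of order $k+2\geq 3$, a uniformizing coordinate identifies the end with a neighborhood of $\infty\in\C P^1$ on which $\widetilde{\varphi}$ is asymptotic to a polynomial quadratic differential, and the corresponding harmonic map is asymptotic to the model ideal-polygon map of Han--Tam--Treibergs--Wan \cite{han1995harmonic}. Theorem A then supplies the desired uniform estimate on these end-neighborhoods; a parallel asymptotic analysis with the half-plane or annular model treats the poles of order exactly $2$ coming from closed boundary components. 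On the thick part, uniform convergence of $d_t$ on compact sets in the spirit of Wolf \cite{wolf1995rtree} gives the estimate for $x_2$ ranging over a bounded region.

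The main obstacle I anticipate is patching these region-wise estimates into a single uniform bound across $\widetilde{X}\setminus U$, especially when $x_1$ and $x_2$ lie in different regions and a near-geodesic for $d_t$ connecting them transits through intermediate zones. My plan is to trace such a path along horizontal and vertical segments of the foliation $\mathcal{F}_{\widetilde{\varphi},\mathrm{vert}}$: after the $\tfrac{1}{4t}$-rescaling, the horizontal segments contribute $o(1)$ to $d_t$, whereas the vertical segments asymptotically realize the transverse-measure distance on $T(\varphi)$, which is itself additive along $\pr_\varphi$-images. Because $\Gamma$-equivariance leaves only finitely many transition types, summing the local contributions should yield the required uniform bound.
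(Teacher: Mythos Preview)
Your reduction contains a genuine error: the set $\overline{F\cap(\widetilde{X}\setminus U)}$ is \emph{not} compact. The fundamental domain $F$ is an ideal polygon whose ideal vertices are lifts of the punctures of $X$, and these vertices are not points of $\partial_\infty\Gamma$ (the Gromov boundary of the free group $\Gamma$, a Cantor set attached via half-planes bounded by lifts of ideal arcs). A neighborhood $U$ of $\partial_\infty\Gamma$ removes half-planes lying deep in the Cayley graph, not horoball neighborhoods of the parabolic fixed points; indeed the paper stresses that $\widetilde{X}\setminus U$ can be noncompact, and establishing uniform convergence on this noncompact set is the whole content of the theorem. So you cannot confine $x_1$ to a compact region, and the subsequent ``compact-sets'' argument from \cite{wolf1995rtree} does not suffice. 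A second issue is your appeal to Theorem~A as a black box on end-neighborhoods: Theorem~A concerns the specific harmonic map $\C\to\mathbb{H}^2$ with polynomial Hopf differential, whereas on a neighborhood of a pole of $\varphi$ the relevant metric $g_t$ comes from the restriction of the \emph{global} harmonic map $h_{t\varphi}$, which is only asymptotic to the model; you would need a quantitative comparison you have not supplied. Finally, you have the roles of horizontal and vertical reversed: in natural coordinates $\zeta=\xi+i\eta$ the rescaled metric is $\tfrac{1}{2}(\cosh G(t)+1)\,d\xi^2+\tfrac{1}{2}(\cosh G(t)-1)\,d\eta^2$, so it is \emph{vertical} arcs whose $g_t$-length tends to $0$ while \emph{horizontal} arcs realize the transverse measure of $\mathcal{F}_{\varphi,\mathrm{vert}}$.

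The paper's actual argument avoids these problems. It first replaces $F$ by a fundamental domain $F_\varphi$ that is convex for the $|\varphi|$-metric (still noncompact), and observes that $\widetilde{X}\setminus U$ is covered by finitely many $\Gamma$-translates of $F_\varphi$. The lower bound $I_\varphi\le d_t$ is immediate from the metric expression. For the upper bound, given $x_1,x_2\in F_\varphi$, the $|\widetilde\varphi|$-geodesic between them stays in $F_\varphi$ by convexity; one deforms it to a staircase of horizontal and vertical arcs, detouring around $\varepsilon$-balls at zeros. The crucial uniform control is then obtained not from Theorem~A but from Minsky's pointwise bounds on $G(h_t)$ (Propositions~2.7--2.9), which give exponential decay of $G(t)$ in the $|\varphi|$-distance to the zero set; these yield $\ell_{g_t}(V)\le De^{-C\sqrt{t}}$ for infinite vertical arcs and $\ell_{g_t}(H)\le i(\mathcal{F}_{\widetilde\varphi,\mathrm{vert}},H)+De^{-C\sqrt{t}}$ for infinite horizontal arcs inside the sink neighborhoods of the poles (Lemma~5.3). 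The compact part $F_\varphi\setminus\pi^{-1}(\bigcup_i U_i)$ contributes a bounded total $|\varphi|$-measure $K$ independent of $x_1,x_2$, and combining these gives $d_t\le I_\varphi+O(e^{-\varepsilon\sqrt t})+O(\varepsilon)$ uniformly on $F_\varphi$.
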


By introducing the complement of a neighborhood of $\partial_\infty \Gamma$, we can characterize all domains where $d_t$ converge uniformly.
Note that the complement $\widetilde{X}\setminus U$ can be a non-compact subset.

\Cref{theoremB} implies that, in the sense of \cite[Definition 5.1]{wolf1995rtree}, the identity maps $\id\colon \widetilde{X} \to (\widetilde{X},d_t)$ converge to $\pr_\varphi \colon \widetilde{X}\to T(\varphi)$.
The identity map $\id\colon \widetilde{X} \to (\widetilde{X},d_t)$ is a harmonic map to the metric space $(\widetilde{X},{\widetilde{h}_{t\varphi}}\!^\ast\widetilde{C}_{t\varphi}/\sqrt{4t})= (\widetilde{X},d_t)$, and the limit $\pr_\varphi\colon\widetilde{X}\to T(\varphi)$ is also a harmonic map as in \cite[Proposition 3.1]{wolf1995rtree}.

Lastly, we describe an application of \Cref{theoremB}.
Let $\mathcal{C}$ and $\mathcal{A}$ be the sets of isotopy classes of simple closed curves and properly embedded arcs on $S$, respectively.
We consider the hyperbolic structures in terms of the length function $\ell_\ast \colon \Teich (S) \to \R_{\geq 0}^{\mathcal{C}\cup\mathcal{A}}$, and describe the limit along a ray.

\begin{corollarya}[\Cref{corollary:PointInBoundary}]
    $(4t)^{-1/2}\ell_\ast [C_{t\varphi},f_{t\varphi}]\to I_\ast\mathcal{F}_{\varphi,\mathrm{vert}}\in \R^{\mathcal{C}\cup\mathcal{A}}$ as $t\to \infty$.
\end{corollarya}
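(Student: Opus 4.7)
The plan is to reduce the corollary to Theorem B by interpreting both sides of the asserted limit as geometric quantities on the universal cover $\widetilde{X}$. For a simple closed curve $\gamma \in \mathcal{C}$, let $g_\gamma \in \Gamma$ denote the corresponding deck transformation. Since $h_{t\varphi}$ lifts to a $\Gamma$-equivariant isometry from $(\widetilde{X}, h_{t\varphi}^\ast C_{t\varphi})$ onto the universal cover of $C_{t\varphi}$, the rescaled length $(4t)^{-1/2}\ell_\gamma(C_{t\varphi})$ coincides with the translation length $\tau_t(g_\gamma)$ of $g_\gamma$ acting on $(\widetilde{X}, d_t)$. On the other side, the standard duality between measured foliations and $\R$-trees identifies $I(\gamma, \mathcal{F}_{\varphi,\mathrm{vert}})$ with the translation length $\tau_T(g_\gamma)$ of $g_\gamma$ on $(T(\varphi), d_{T(\varphi)})$. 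The task therefore reduces to proving $\tau_t(g_\gamma) \to \tau_T(g_\gamma)$.

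For this, I would pick $x_0 \in \widetilde{X}$ with $\pr_\varphi x_0$ on the axis of $g_\gamma$ in $T(\varphi)$, so that $d_{T(\varphi)}(\pr_\varphi x_0, \pr_\varphi g_\gamma^n x_0) = n\,\tau_T(g_\gamma)$ for every $n \geq 1$. For each fixed $n$ the set $\{x_0, g_\gamma^n x_0\}$ lies in $\widetilde{X} \setminus U_n$ for some neighborhood $U_n$ of $\partial_\infty \Gamma$, and Theorem B yields $d_t(x_0, g_\gamma^n x_0) \to n\,\tau_T(g_\gamma)$ as $t \to \infty$. Combining this with the CAT(0) sandwich
\[
n\,\tau_t(g_\gamma) \;\leq\; d_t(x_0, g_\gamma^n x_0) \;\leq\; n\,\tau_t(g_\gamma) + 2\,d_t(x_0, A_{d_t}),
\]
where $A_{d_t}$ denotes the $d_t$-axis of $g_\gamma$, and using that $d_t(x_0, A_{d_t})$ stays uniformly bounded (the negative-curvature control exerted by the bounded displacement $d_t(x_0, g_\gamma x_0)$, which converges by the $n=1$ case), one obtains $\tau_t(g_\gamma) \to \tau_T(g_\gamma)$ by sending first $t \to \infty$ and then $n \to \infty$. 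The degenerate case $\tau_T(g_\gamma)=0$ is handled by the upper inequality alone.

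For an arc $\alpha \in \mathcal{A}$, the rescaled length $(4t)^{-1/2}\ell_\alpha(C_{t\varphi})$ equals the $d_t$-distance between two specific lifts $\widetilde{\partial_1}, \widetilde{\partial_2}$ of the boundary components meeting the endpoints of a lift of $\alpha$, while $I(\alpha, \mathcal{F}_{\varphi,\mathrm{vert}})$ equals $d_{T(\varphi)}(\pr_\varphi\widetilde{\partial_1}, \pr_\varphi\widetilde{\partial_2})$. Choosing points on each $\widetilde{\partial_i}$ that approximately realize the tree-side infimum, lying in a fixed compact subset of $\widetilde{X} \setminus U$, Theorem B gives the upper bound; the lower bound follows from a compactness argument applied to the $d_t$-perpendicular feet, using that the two boundary lifts have distinct endpoints in $\partial_\infty \Gamma$. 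The main technical obstacle throughout is precisely this uniform control: ensuring axis fundamental domains and perpendicular feet stay in a fixed complement $\widetilde{X} \setminus U$ as $t \to \infty$. This is where the quantitative CAT geometry of the Riemannian pullback metric is essential; alternatively, one could package the whole argument as an application of Paulin's theorem on the continuity of length functions under equivariant Gromov–Hausdorff convergence of $\Gamma$-spaces.
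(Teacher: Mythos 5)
Your closed-curve argument is correct in outline but takes a genuinely different route from the paper. The paper simply writes the rescaled geodesic length of $[\gamma]$ as $\inf_{x\in F} d_t(x,g_{[\gamma]}x)$ over a fundamental domain $F$, notes that Theorem B gives \emph{uniform} convergence of $d_t$ to $I_\varphi$ on $F$ (a set of the form $\widetilde{X}\setminus U$), and passes the uniform convergence through the infimum; no $g^n$-trick or exchange of limits is needed. Your route via $n\,\tau_t \le d_t(x_0,g^n x_0)\le n\,\tau_t + 2\,d_t(x_0,A_{d_t})$ works, but be aware that the uniform bound on $d_t(x_0,A_{d_t})$ requires not only that the displacement $d_t(x_0,g x_0)$ stays bounded above but also that $\tau_t(g)$ stays bounded \emph{below}: in a CAT$(-1)$ comparison one has roughly $\sinh(d(x_0,gx_0)/2)\ge\cosh(d(x_0,A))\sinh(\tau_t/2)$, so you need $\tau_t\ge\tau_{T(\varphi)}(g)>0$, which is supplied by the paper's Lemma stating $I_\varphi<d_t$ (and which you do not cite). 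The rescaled metrics are CAT$(-4t)$, hence CAT$(-1)$ for $t\ge 1/4$, so this is available; what you buy with the extra machinery is a way around pinning down where the $d_t$-axis sits, which is the genuine technical point here and which you correctly identify.

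The arc case is where your proposal has a real gap. You assert that $(4t)^{-1/2}\ell_\alpha(C_{t\varphi})$ ``equals the $d_t$-distance between two lifts of the boundary components,'' but $\partial C_{t\varphi}$ is not in the image of $h_{t\varphi}$: the boundary is ideal for the conformal structure on $X$, so this distance is not literally defined in $(\widetilde{X},d_t)$, and Theorem B says nothing about it directly. The paper's mechanism is to choose half-plane (resp.\ cylindrical) domains $P_i$ (resp.\ $A_i$) of $\varphi$ at each pole, observe that the geodesic arc must cross them (giving the lower bound $\ge \inf d_t(x_1,x_2)$ over $x_i$ in these domains), and then use the vertical-line estimate (Lemma~\ref{lemma:GeneralCaseestimateForInfiniteLines}) to connect any point of $P_i$ to $\partial_i$ by a path of $g_t$-length at most $De^{-\delta\sqrt{t}}$, yielding the matching upper bound $\le \inf d_t(x_1,x_2)+2De^{-\delta\sqrt{t}}$; Theorem B then applies to points of $P_1\times P_2$, and the infimum of $I_\varphi$ there realizes $I_\ast\mathcal{F}_{\varphi,\mathrm{vert}}([\gamma])$. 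Your ``compactness argument applied to the $d_t$-perpendicular feet'' does not substitute for this exponential estimate, which is the essential quantitative input. Finally, your suggested repackaging via Paulin's continuity of length functions would at best cover $\mathcal{C}$, not the arc classes $\mathcal{A}$, and would require the equivariant form of the convergence, which Theorem B does not state.
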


From this corollary, we find that the direction of the ray determines the limit point in the projective space $P(\R_{\geq 0}^{\mathcal{C}\cup\mathcal{A}})$ of functions.
Since the limit is an accumulation point of the sequence of hyperbolic surfaces $\{[C_{t\varphi},f_{t\varphi}]\}_{t>0}$, there is a point in the \textit{Thurston boundary} corresponding to the limit.
In \Cref{subsection:TheThurstonBoundary}, we define the Thurston boundary of the Teichmüller space of a surface with closed boundaries or crown ends in a similar way to \cite{alessandrini2016horofunction}.

We briefly remark on some relevant results.

Using the notion of ultralimit, we obtain the limit metric space $(X_\infty,d_\infty)$ of the sequence $(\widetilde{X},d_t)$ as described in \cite{kleiner1997rigidity}.
Furthermore, it follows from \cite{korevaar1997global} that harmonic maps $h_t\colon \widetilde{X} \to \widetilde{C}_{t\varphi}$ converge uniformly on compact subsets of $\widetilde{X}$ to a harmonic map $h_\infty \colon \widetilde{X} \to (X_\infty,d_\infty)$.
These results are obtained under a general framework.
In contrast, the result in \Cref{theoremB} is more desirable, as it provides explicit descriptions of the limits of both metric spaces and harmonic maps, with convergence valid even on the noncompact domains $\widetilde{X}\setminus U$.
Additionally, our conclusion is achieved without relying on significant tools in \cite{kleiner1997rigidity,korevaar1997global}.

Recently, for the results on which this paper is based, analogous results have been obtained for the Hitchin component of surface group representations into $\mathrm{SL}(3,\R)$.
For instance, the paper \cite{loftin2022limits} demonstrates that harmonic maps arising from rays of cubic differentials converge to a harmonic map into a building and the associated holonomy representations into $\mathrm{SL}(3,\R)$ also converge to a representation into the isometry group of the building.
Moreover, the paper \cite{nie2023poles} establishes an analogue of Gupta's work \cite{gupta2021harmonic} for convex $\R P^2$-structures.

\subsection*{Acknowledgements}
I would like to thank my supervisor, Shinpei Baba, for helpful advice, many discussions, and encouragement.
I am grateful to the anonymous referee for many helpful comments and suggestion of references.
This work was supported by JSPS KAKENHI Grant Number JP23KJ1468.

\section{Preliminaries}

\subsection{Harmonic maps}

Let $X$ be a Riemann surface with a local holomorphic coordinate $z$ and $(S,\rho(w)|dw|^2)$ be a surface with a Riemannian metric $\rho$, where $w$ denotes a isothermal coordinate of $\rho$.
A $C^2$ map $f\colon X\to (S,\rho)$ is \textit{harmonic} if $f$ satisfies
\begin{equation*}
    f_{z\overline{z}}+\frac{\rho_w}{\rho} f_zf_{\overline{z}}=0.
\end{equation*}
For a map $f\colon X\to (S,\rho)$, the $(2,0)$-part of the pullback metric $(f^\ast \rho)^{2,0}$ is a quadratic differential on $X$, which is called the \textit{Hopf differential} of $f$.
In the Teichmüller theory of harmonic maps, the following is fundamental: if the Jacobian of $f$ is nowhere vanishing, $f$ is a harmonic map if and only if its Hopf differential is a holomorphic quadratic differential on $X$ (see \cite{sampson1978harmonic}).

\subsection{Teichmüller space}\label{section:TeichmullerSpace}

The \textit{Teichmüller space} is the space of hyperbolic structures with marking on a topological surface.
In this subsection, we will define a \textit{marking} as a map from a punctured Riemann surface to a hyperbolic surface with closed geodesic boundaries or crown ends.

Let $\overline{S}$ be an oriented, closed surface of genus $g$.
From $\overline{S}$, we remove $n$ open disks $D_1,\ldots,D_n\subset \overline{S}$ whose closures are pairwise disjoint.
In addition, for each $1 \leq i\leq n$, we remove $m_i\geq 0$ distinct points on $\partial D_i$.
We denote the resulting surface by $S$.
Except for 6 cases (see \Cref{figure:ExceptionalSurfaces}), $S$ admits a complete hyperbolic structure of finite area with totally geodesic boundary.
\begin{figure}[h]
   \centering
   \begin{overpic}[scale=1.1]{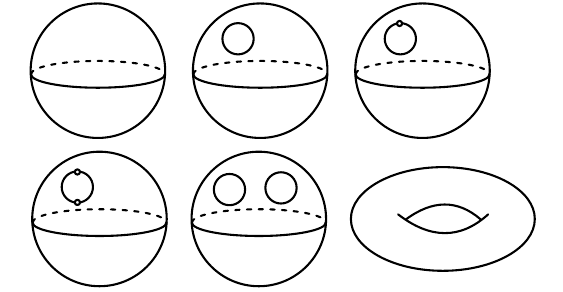}
   \end{overpic}
   \caption{Exceptional surfaces}
   \label{figure:ExceptionalSurfaces}
\end{figure}

Let $X$ be a closed Riemann surface of genus $g$ with $n$ punctures.
At each puncture $p_i\ (1\leq i\leq n)$, we pick $m_i$ tangent directions $v_{i,1},\ldots,v_{i,m_i} \in (T_{p_i}X\setminus \{0\})/\R_{\geq 0} $ counterclockwise.
The data $\theta=(\theta_i)_{i=1}^n$ of tuples $\theta_i=(v_{i,1},\ldots,v_{i,m_i})$ is called \textit{direction data}.
Taking a real oriented blow-up at each puncture $p_i$ and removing the points corresponding to the directions in the direction data, we obtain the surface $S$.
Thus, we can regard $X$ as a subset of $S$.

Let $C$ be a complete, finite area hyperbolic surface homeomorphic to $S$ with totally geodesic boundary.
A \textit{marking} of $C$ from $(X,\theta)$ is a diffeomorphism $f\colon X \to C\setminus \partial C$ which is the restriction of a diffeomorphism $\bar{f}\colon S\to S$.
For a hyperbolic surface $C$ homeomorphic to $S$ and its marking $f$ from $(X,\theta)$, we call the pair $(C,f)$ the \textit{marked hyperbolic surface}.

Let $(C_1,f_1)$ and $(C_2,f_2)$ be marked hyperbolic surfaces from $(X,\theta)$.
We define the equivalence relation $(C_1,f_1)\sim (C_2,f_2)$,
if there exists an isometry $g\colon C_1\to C_2$ such that $g\circ f_1$ is isotopic to $f_2$ relative to the direction data $\theta$. 
The \textit{Teichmüller space} $\mathcal{T}(S)$ is the quotient space of equivalence classes of marked hyperbolic surfaces homeomorphic to $S$.
The Teichmüller space $\mathcal{T}(S)$ is homeomorphic to the Euclidean space of dimension
\begin{equation*}
    6g-6+\sum_{i=1}^l(m_i+3).
\end{equation*}

\subsection{Principal part, analytic residue}

Let $X$ be a closed Riemann surface with punctures $\{p_1,\ldots,p_n\}$ and $\varphi$ a holomorphic quadratic differential on $X$ with poles at punctures.
We take a coordinate disk $(D_i,z_i)$ around each puncture $p_i$.
Let $k_i$ be the order of $\varphi$ at $p_i$.
The 1-form $\pm\sqrt{\varphi(z_i)}\,dz_i$ (up to the multiplication by $\pm 1$) can be expanded in a Laurent series.
Then the principal part of the Laurent series (i.e. terms of degree at least $-1$) is
\begin{equation} \label{eq:PrincipalPart}
    \begin{cases}
        \pm (c_r z_i^{-r}+c_{r-1}z_i^{-r+1}+\cdots+c_1z_i^{-1})\,dz_i & (k_i \text{ is even})\\
        \pm z^{-1/2} (c_r z_i^{-r}+c_{r-1}z_i^{-r+1}+\cdots+c_1z_i^{-1})\,dz_i & (k_i \text{ is odd}),
    \end{cases}
\end{equation}
where $k_i=2r$ or $2r+1$, and $c_1,\ldots,c_r$ are complex numbers.
\begin{definition}[analytic residue]
    We call the form in \Cref{eq:PrincipalPart} the \textit{principal part} of $\varphi$ at $p_i$ with respect to the coordinate disk $(D_i,z_i)$.
    The coefficient of degree $-1$ term in \Cref{eq:PrincipalPart} is determined up to the multiplication by $\pm 1$. It is called the \textit{analytic residue} $\Res(\varphi,p_i)$ of $\varphi$ at $p_i$.
\end{definition}
By this definition, if $k_i$ is odd, the analytic residue of $\varphi$ at $p_i$ is $0$, and if $k_i=2$, the analytic residue of $\varphi$ at $p_i$ equals to the square root of the coefficient of $z_i^{-2}$ in the Laurent series of $\varphi$.
By the residue theorem, we have 
\begin{equation*}
    \pm\int_\gamma \sqrt{\varphi} = \pm 2\pi i \Res (\varphi,p_i),
\end{equation*}
where $\gamma$ is a loop around $p_i$.
The principal part depends on the choice of the coordinate disk, but the analytic residue does not.

Conversely, if a form $P_i(z_i)\,dz_i$ which is expressed as \Cref{eq:PrincipalPart} is given on each coordinate disk $(D_i,z_i)$, we call the data $P=(\pm P_i(z_i))_{i=1}^n$ the \textit{prescribed principal part data}.
Let $\QD(X,P)$ denote the space of holomorphic quadratic differentials on $X$ whose principal part at each puncture $p_i$ with respect to $(D_i,z_i)$ is equal to $P_i(z_i)$. 
The analytic residue $\Res(P,p_i)$ of $P$ at $p_i$ is the coefficient of $z_i^{-1}$ in $P_i(z_i)$.

\subsection{Direction data obtained from principal parts}\label{subsection:DirectionDataObtainedFromPincipalParts}

We consider a prescribed principal part data $P=(P_i(z_i))_{i=1}^n$ on a closed Riemann surface $X$ of genus $g$ with $n$ punctures $p_1,\ldots,p_n$.
We suppose the quadratic differential $P_i^2=P_i(z_i)^2dz_i^2$ has a pole of order $m_i+2$ at each puncture $p_i$.
In the case of $m_i\geq 1$, if each leaf of horizontal foliation of $P_i^2$ on $(D_i,z_i)$ tends to $p_i$, its tangent direction converges to one of the $m_i$ asymptotic directions $v_{i,1},\ldots,v_{i,m_i}$ determined by $P_i^2$ (e.g.\ see \cite{gupta2019meromorphic}).
We set $\theta_i=(v_{i,1},\ldots,v_{i,m_i})$ and $\theta(P)=(\theta_i)_{i=1}^n$.

In the following, we suppose that the marking of every $[C,f] \in \mathcal{T}(S)$ is from $(X,\theta(P))$ to $C$. 

\subsection{Length data}
Following \cite{allegretti2021stability}, we here define the \textit{length data} of the marked hyperbolic surface $C \in \mathcal{T}(S)$.
Let $\partial_i$ denote the closed boundary or crown end of $C$ corresponding to $p_i$ by the marking of $C$.
We first consider the case that the number of cusps of $\partial_i$ is a positive even number.
On the hyperbolic surface $C$, we put horocycles centered at each ideal vertex $p_{i,j}\ (1\leq j\leq m_i)$ of $\partial_i$, and truncate ideal vertices along the horocycles.
If the horocycles are disjoint each other, a geodesic segment is left on each geodesic boundary between adjacent two ideal vertices $p_{i,j}, p_{i,j+1}$. Let $\ell_j$ denote the length of the geodesic segment.
Then, the alternate sum
\begin{equation*}
    L_i\coloneqq \pm \sum_{j} (-1)^j \ell_j
\end{equation*}
is well-defined up to the multiplication by $\pm 1$ (see \cite[Lemma 2.10]{gupta2021harmonic}).
If two adjacent horocycles have an intersection, we define the length $\ell_j$ is negative.
\begin{definition}
    The number $L_i$ up to $\pm 1$ is called the \textit{metric residue} of $C$ with respect to $\partial_i$.
\end{definition}
In the case that the number of cusps of $\partial_i$ is odd, we define $L_i=0$.
For closed boundary $\partial_i$, we define $L_i$ as the length of $\partial_i$ in the metric of $C$.

\subsection{Compatibility} 

We introduce the compatibility between the principal part data $P=(P_i)_{i=1}^n$ and the length data $L=(L_i)_{i=1}^n$.
\begin{definition}\label{definition:CompatibilityCondition}
    The length data $(L_i)_{i=1}^n$ of $C\in\mathcal{T}(S)$ is \textit{compatible} with $P$ if $|L_i|=4\pi|\Im \Res(P,p_i)|$ holds for each $i=1,\ldots,n$.
\end{definition}
\begin{remark}
    The compatibility condition is introduced by Gupta \cite{gupta2021harmonic} for hyperbolic surfaces with crown ends and by Sagman \cite{sagman2023infinite} for those with closed boundaries, and Allegretti \cite{allegretti2021stability} summarized up the both cases.
    The condition in \Cref{definition:CompatibilityCondition} is apparently different from one in \cite[p.3855 (1),(2)]{allegretti2021stability}, however we find these conditions are the same by a direct computation. (Note that the definitions of ``analytic residue'' and ``metric residue'' in papers \cite{gupta2021harmonic,sagman2023infinite,allegretti2021stability} differ slightly from each other.)
\end{remark}

\subsection{Parametrization}\label{subsection:Parametrization}

The Teichmüller space $\mathcal{T}_L(S)$ can be parametrized by the space of holomorphic quadratic differentials on $X$ via harmonic maps. In the case that the differentials have a double pole at each puncture, Wolf proved the existence of harmonic maps \cite{wolf1991infinite} and Sagman gave the parametrization result \cite{sagman2023infinite}.
In the case that the differentials have a higher order pole at each puncture, the existence and parametrization results were given by Gupta \cite{gupta2021harmonic}.
Allegretti \cite{allegretti2021stability} also considered the mixed cases, which had not been treated in the former papers.

\begin{theorem}[Existence, \cite{wolf1991infinite,gupta2021harmonic,allegretti2021stability}]
    For each marked hyperbolic surface $[C,f] \in \mathcal{T}_L(S)$, there exists a unique harmonic map $h_{[C,f]}=h\colon X\to C\setminus \partial C$ such that the Hopf differential of $h$ is in $\QD(X,P)$ and $h$ is homotopic to $f$ relative to the direction data $\theta(P)$.
\end{theorem}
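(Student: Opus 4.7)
My plan is to combine the exhaustion-and-approximation strategy of Wolf with the model-map analysis developed by Gupta and Sagman so as to treat all pole orders uniformly. The starting point is to fix, on a punctured coordinate disk $(D_i^\ast, z_i)$ around each puncture $p_i$, an explicit \emph{local model} harmonic map $h_i^{\mathrm{mod}}\colon D_i^\ast \to C$ whose Hopf differential agrees asymptotically with $P_i(z_i)^2\,dz_i^2$. For a pole of order $\geq 3$, Gupta's asymptotic analysis produces a model mapping the leaves of the horizontal foliation of $P_i^2$ to prongs accumulating on the ideal vertices of the crown end. For a pole of order exactly $2$, the Wolf--Sagman analysis gives a model into a standard collar of the closed geodesic boundary $\partial_i$, and this is the place where the compatibility assumption $|L_i|=4\pi|\Im\Res(P,p_i)|$ enters: it is exactly the condition under which a model with the prescribed principal part can wrap correctly around $\partial_i$ with the correct translation length.

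Next, I would exhaust $X$ by an increasing family of compact subsurfaces $X_k$ whose complements lie in the $D_i^\ast$, and on each $X_k$ solve the Dirichlet problem for harmonic maps to $C$, with boundary data obtained by restricting $h_i^{\mathrm{mod}}$ to $\partial X_k \cap D_i^\ast$ and with $h_k$ in the homotopy class determined by $f$ and $\theta(P)$. Because $C$ has non-positive curvature (take the convex core, or double along $\partial C$ for the closed-boundary case), existence and uniqueness of $h_k$ on each $X_k$ is standard (Hamilton--Jost). Applying interior $C^{k,\alpha}$ estimates and the Cheng gradient estimate, one extracts a subsequential limit $h\colon X \to C\setminus \partial C$ which is harmonic on all of $X$.

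The essential step is then to show that the limit $h$ carries the correct asymptotic data, so that its Hopf differential actually lies in $\QD(X,P)$. For this I would estimate the distance $\delta(z) = d_C(h(z), h_i^{\mathrm{mod}}(z))$ on each $D_i^\ast$. Since the target is NPC, $\delta$ is subharmonic on $D_i^\ast$; by construction of the Dirichlet problem $\delta$ is bounded on compact pieces of $D_i^\ast$, and the model together with the compatibility condition forces $\delta(z)\to 0$ as $z\to p_i$. The maximum principle then gives a quantitative estimate of the form $\delta(z) = o(1)$ as $z\to p_i$, which in turn yields that $\mathrm{Hopf}(h) - P_i(z_i)^2\,dz_i^2$ extends holomorphically across $p_i$. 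Hence $\mathrm{Hopf}(h)\in \QD(X,P)$.

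Uniqueness is obtained in the same spirit: given two harmonic maps $h_1, h_2$ both in the homotopy class of $f$ relative to $\theta(P)$ with Hopf differentials in $\QD(X,P)$, the function $z\mapsto d_C(h_1(z), h_2(z))$ is subharmonic on $X$ and tends to $0$ at each puncture by the asymptotic matching, so the maximum principle gives $h_1 \equiv h_2$. The main obstacle I anticipate is the asymptotic comparison in the third paragraph. Constructing the model maps is delicate in the crown-end case (where one must track individually each of the $m_i$ prongs, matching them to the asymptotic directions in $\theta(P)$) and in the mixed case handled by Allegretti, and proving that $\delta(z)\to 0$ rather than merely staying bounded requires a careful quantitative version of the subharmonicity argument together with precise asymptotics of both the model and of $h$ near the puncture.
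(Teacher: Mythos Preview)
The paper does not prove this theorem at all: it is stated as a background result, with the proof attributed entirely to the cited works of Wolf, Gupta, and Allegretti (and, implicitly, Sagman). There is therefore nothing in the paper to compare your proposal against.

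That said, your sketch is a reasonable outline of the strategy actually used in those references: the exhaustion-and-Dirichlet-problem approach goes back to Wolf's infinite-energy paper, the model maps at higher-order poles and the matching with crown ends are Gupta's contribution, and the compatibility condition enters exactly where you say it does. One point to be careful about: your uniqueness argument assumes $d_C(h_1(z),h_2(z))\to 0$ at each puncture, but a priori the two maps only have the same \emph{principal part}, so the distance need only be bounded, not decaying; in the cited papers uniqueness is handled either by showing boundedness suffices (via a Liouville-type argument for bounded subharmonic functions on the punctured surface) or by a more delicate comparison. But since the paper under review simply quotes the result, this is a matter for the original sources rather than for this paper.
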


\begin{theorem}[Parametrization, \cite{gupta2021harmonic,sagman2023infinite,allegretti2021stability}]\label{theorem:ParametrizationResult}
    We define the map 
    \begin{equation*}
        \Psi\colon \mathcal{T}_L(S) \to \QD(X,P)
    \end{equation*}
    by assigning each $[C,f]\in \mathcal{T}_L(S)$ to the Hopf differential of the harmonic map $h_{[C,f]}$.
    Then, the map $\Psi$ is a homeomorphism.
\end{theorem}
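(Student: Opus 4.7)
The plan is to establish \Cref{theorem:ParametrizationResult} in three stages: well-definedness and injectivity of $\Psi$ via a Bochner-type equation governing harmonic maps, surjectivity by solving this equation for prescribed data, and continuity through a dimension and invariance-of-domain argument.

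Well-definedness of $\Psi$ is immediate from the existence theorem stated just above. For injectivity, I would exploit the fact that a harmonic diffeomorphism $h\colon X\to (C,\sigma)$ with Hopf differential $\varphi$ has pullback metric of the schematic form $h^{\ast}\sigma = \varphi + e^{2w}\sigma_{X} + \overline{\varphi}$, where $\sigma_{X}$ is an auxiliary conformal background metric on $X$ and $w$ satisfies a semilinear elliptic equation of Bochner type whose nonlinearity involves $e^{2w}$ and $|\varphi|^{2}_{\sigma_{X}}e^{-2w}$. A maximum-principle argument, together with the prescribed asymptotic behaviour of $w$ near each puncture dictated by $\varphi$ and the direction data $\theta(P)$, gives uniqueness for this PDE among solutions with these asymptotics. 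Since the pair $(\varphi,w)$ determines $h^{\ast}\sigma$, and hence $[C,f]$ up to marked isometry, $\Psi$ is injective.

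For surjectivity, given $\varphi\in\QD(X,P)$, I would construct the pullback metric directly by solving the same Bochner equation on $X$ with asymptotic behaviour prescribed near each puncture. Near a pole of order $2$, one seeks $w$ whose asymptotic expansion corresponds to a closed geodesic boundary of length $4\pi|\Im\Res(\varphi,p_{i})|$; near a pole of order $\geq 3$, the analogous expansion produces a crown end whose ideal vertices are exactly those recorded by $\theta(P)$. The concrete approach is to build explicit model solutions on punctured neighbourhoods of each $p_{i}$, use them as sub- and supersolution barriers, solve the PDE on an exhaustion of $X$ by compact subsurfaces with Dirichlet data matching the barriers, and pass to a global limit. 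The reconstructed metric then has totally geodesic boundary or crown end of the prescribed topological type, and the construction simultaneously delivers a harmonic map $h\colon X\to C$ whose Hopf differential is $\varphi$.

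Continuity of $\Psi$ and of its inverse follows from continuous dependence of the Bochner PDE on $\varphi$ together with standard elliptic estimates on the exhaustion. Since both $\mathcal{T}_{L}(S)$ and $\QD(X,P)$ are manifolds of the same finite real dimension $6g-6+\sum_{i}(m_{i}+3)$, invariance of domain upgrades a continuous injection to a homeomorphism onto an open subset, and surjectivity then promotes this to a homeomorphism onto all of $\QD(X,P)$. The hard part will be the barrier construction at higher-order poles, where the interaction between the principal part of $\varphi$ and the crown-end geometry must match to leading order in several terms; the compatibility condition of \Cref{definition:CompatibilityCondition} is precisely what makes this matching possible and guarantees that the asymptotic data of the local models is globally consistent.
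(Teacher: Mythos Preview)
The paper does not prove \Cref{theorem:ParametrizationResult} at all: it is stated as a quoted result from \cite{gupta2021harmonic,sagman2023infinite,allegretti2021stability} and used as a black box, with no argument supplied. So there is nothing in the paper to compare your proposal against.

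That said, your outline is broadly in line with how the cited references actually proceed. Gupta and Sagman do work with the Bochner-type (or ``vortex'') equation for the log-energy density, build model solutions near the poles whose asymptotics encode the boundary/crown geometry, use sub- and supersolution barriers to solve on an exhaustion, and then argue injectivity via the maximum principle and continuity via elliptic estimates plus invariance of domain. One point to be careful about in your sketch: the dimension match you invoke is between $\mathcal{T}_L(S)$ (where the length data $L$ is fixed) and $\QD(X,P)$ (where the full principal part $P$ is fixed), and both are affine spaces of real dimension $6g-6+2n$, not $6g-6+\sum_i(m_i+3)$; the latter is the dimension of the full $\mathcal{T}(S)$. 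If you intend to carry this out in detail you should check that the compatibility condition pins down exactly the right number of parameters on each side.
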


Let $\QD(X)$ be the space of holomorphic quadratic differentials on $X$ with poles at punctures.
By \Cref{theorem:ParametrizationResult}, given $\varphi \in \QD(X)$, we have the hyperbolic surface $C_\varphi$ and the harmonic diffeomorphism $h_\varphi\colon X\to C_\varphi$ such that $\Psi([C_\varphi,h_\varphi])=\varphi$.
Therefore, we obtain the well-defined map 
\begin{equation*}
    \rho\colon \QD(X)\to \operatorname{Met}(X) \text{ given by }\rho(\varphi)={h_\varphi}^\ast C_\varphi,
\end{equation*}
where $\operatorname{Met}(X)$ denotes the set of Riemannian metrics on $X$.
Note that $\rho(\varphi)$ can be incomplete, if a puncture is a regular point, zero or simple pole of $\varphi$.

Let $\pi\colon \widetilde{X}\to X$ be the universal covering of $X$.
The universal covering $\widetilde{X}$ is identified with the complex plane $\C$ in the case that $X$ is the Riemann sphere and the number of punctures is $1$ or $2$, and with the upper half plane $\mathbb{H}$ in the other case.
Ler $\Gamma$ be the discrete free subgroup of $\operatorname{Isom}^+(\widetilde{X})$ such that $X=\widetilde{X}/\Gamma$.
The lift metric of ${h_\varphi}^\ast C_\varphi$ to $\widetilde{X}$ is denoted by $g_\varphi$.
 
\subsection{Measured foliation and dual R-tree}\label{subsection:MeasuredFoliationAndDualR-tree}

Let $\varphi$ be a holomorphic quadratic differential on a Riemann surface $X$.
We introduce the measured foliations and the $\R$-tree obtained from $\varphi$ (for more details, see \cite{wolf1995rtree}).
In a \textit{natural coordinates} $\zeta=\xi +i\eta$ which is defined on a sufficiently small neighborhood of a nonzero point of $\varphi$,
the holomorphic quadratic differential $\varphi$ is represented by $d\zeta^2$.
A natural coordinate is uniquely determined up to a multiplication $\pm 1$ and additive constant.
Therefore, vertical lines $\{\xi=\text{constant}\}$ (resp.\,horizontal lines $\{\eta=\text{constant}\}$) determine a singular foliation on $\widetilde{X}$ which is pronged at each zero of $\varphi$.
The number of prongs at a zero is equal to $(\text{the order of the zero})+2$.
The density $|d\xi|$ (resp.\,$|d\eta|$) defines a transverse measure on the singular foliation.
The measured foliation obtained from $\varphi$ is called the \textit{vertical measured foliation} (resp.\,horizontal measured foliation) of $\varphi$ and denoted by $\mathcal{F}_{\varphi,\mathrm{vert}}$ (resp.\,$\mathcal{F}_{\varphi,\mathrm{hori}}$).

Taking the lift $\widetilde{\mathcal{F}}_{\varphi,\mathrm{vert}}$ of the vertical measured foliation of $\varphi$ to the universal covering $\widetilde{X}$, we obtain the \textit{leaf space} $T(\varphi)$ which is given by collapsing each leaf of $\widetilde{\mathcal{F}}_{\varphi,\mathrm{vert}}$ to a point.
The \textit{dual $\R$-tree} to $\widetilde{\mathcal{F}}_{\varphi,\mathrm{vert}}$ is the metric space $(T(\varphi), d_{T(\varphi)})$, where $d_{T(\varphi)}$ is the distance function on $T(\varphi)$ induced by the transverse measure $|d\xi|$.
The projection $\pr_\varphi \colon \widetilde{X} \to T(\varphi)$ is called the \textit{collapsing map}.

\subsection{Geometry of harmonic maps}
\label{subsection:GeometryOfHarmonicMaps}

Let $X$ be a Riemann surface and $Y$ be a hyperbolic surface with finite area.
We suppose that $h \colon X\to Y$ is a harmonic diffeomorphism and $\varphi$ is the Hopf differential of $h$.
The norm $|\nu(h)|\coloneqq |h_{\overline{z}}/h_z|$ of the Beltrami differential of $h$ is a well-defined function on $X$.
We set $G(h)=\log(1/|\nu(h)|)$.
The function $G(h)$ plays an important role in studying the geometry of harmonic maps.
In \cite{minsky1992harmonic}, Minsky gave useful estimates for the function $G(h)$, which originated from Wolf's work \cite{wolf1991high}.
\begin{proposition}[{\cite[Lemma 3.2]{minsky1992harmonic}}]\label{Proposition:RoughBound}
    We fix a point $p\in X$ which is nonzero of $\varphi$.
    We suppose a $|\varphi|$-metric ball $B_{|\varphi|}(p,r)$ contains no zero of $\varphi$ and is embedded into $X$.
    Then, we have
    \begin{equation*}
        G(h)(p)\leq \sinh^{-1}\left(\frac{\operatorname{Area}(X)}{2\pi r^2}\right).
    \end{equation*}
\end{proposition}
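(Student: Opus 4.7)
I would first reduce to a flat model by choosing a natural coordinate $\zeta = \xi + i\eta$ for $\varphi$ on $B_{|\varphi|}(p,r)$; this is possible because the ball is embedded and contains no zero of $\varphi$. In this coordinate the $|\varphi|$-metric is Euclidean, so $B_{|\varphi|}(p,r)$ is identified with the Euclidean disk $B(0,r)$, and $|\varphi|\equiv 1$. Let $\mathcal{H}$ and $\mathcal{L}$ denote the holomorphic and antiholomorphic energy densities of $h$ with respect to the hyperbolic metric on $Y$, so that $\mathcal{H}\mathcal{L}=|\varphi|^2\equiv 1$ and $|\nu(h)|^2=\mathcal{L}/\mathcal{H}$. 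It follows that $\mathcal{H}=e^{G(h)}$, $\mathcal{L}=e^{-G(h)}$, and the Jacobian is $J(h)=\mathcal{H}-\mathcal{L}=2\sinh G(h)$. Also $G(h)\geq 0$, since $h$ is an orientation-preserving diffeomorphism.

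The next step is the Bochner--Sampson identity for a harmonic map from a flat surface into a surface of Gaussian curvature $-1$: at points where $\mathcal{H}>0$,
\[ \Delta_\zeta \log \mathcal{H} = -2K_Y(\mathcal{H}-\mathcal{L}) = 2(\mathcal{H}-\mathcal{L}). \]
Since $\log\mathcal{H}=G(h)$, this becomes $\Delta G(h) = 4\sinh G(h) \geq 0$, so $G(h)$ is subharmonic on $B(0,r)$. The planar sub-mean-value inequality gives
\[ G(h)(p) \leq \frac{1}{\pi r^2}\int_{B(0,r)} G(h)\, d\xi\, d\eta, \]
and, because $\sinh$ is convex and increasing on $[0,\infty)$, Jensen's inequality upgrades this to
\[ \sinh G(h)(p) \leq \frac{1}{\pi r^2}\int_{B(0,r)} \sinh G(h)\, d\xi\, d\eta. \]

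To close the argument, I would bound the right-hand side by the target area: because $h$ restricted to the embedded ball is a diffeomorphism onto its image in $Y$, the change-of-variables formula yields
\[ \int_{B(0,r)} 2\sinh G(h)\, d\xi\, d\eta = \int_{B(0,r)} J(h)\, d\xi\, d\eta = \operatorname{Area}(h(B(0,r))) \leq \operatorname{Area}(X), \]
where $\operatorname{Area}(X)$ stands for the total hyperbolic area (equivalently $\operatorname{Area}(Y)$, since $h$ is a diffeomorphism). Combining this with the Jensen step and applying $\sinh^{-1}$ gives $G(h)(p) \leq \sinh^{-1}(\operatorname{Area}(X)/(2\pi r^2))$, as claimed. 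The step I expect to be the most delicate is pinning down the exact sign and normalization in the Bochner--Sampson identity so that $\Delta G(h) = 4\sinh G(h)$ drops out cleanly in the flat natural coordinate; once that PDE is in hand, subharmonicity, Jensen, and the global area bound combine in a completely routine way.
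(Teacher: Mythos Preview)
The paper does not supply its own proof of this proposition; it is simply quoted from Minsky \cite[Lemma~3.2]{minsky1992harmonic}. Your argument is correct and is essentially Minsky's original proof: pass to a natural coordinate so that $\mathcal{H}\mathcal{L}=1$ and $G=\log\mathcal{H}$, use the Bochner identity $\Delta G = 2(\mathcal{H}-\mathcal{L}) = 4\sinh G$ to obtain subharmonicity, apply the sub-mean-value inequality followed by Jensen's inequality for the convex increasing function $\sinh$, and then bound $\int_{B(0,r)} 2\sinh G = \int_{B(0,r)} J(h)$ by the hyperbolic area of the target. Your parenthetical remark that $\operatorname{Area}(X)$ here means the hyperbolic area of the image surface is exactly the intended reading.
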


\begin{proposition}[{\cite[Lemma 3.3]{minsky1992harmonic}}]\label{Proposition:ExponentialEstimate}
    Let $p\in X$ be a point $|\varphi|$-distant at least $d$ away from zeros of $\varphi$.
    If $G(h)$ is bounded by $B$ from above on $B_{|\varphi|}(p,d)$,  then we have
    \begin{equation*}
        G(h)(p) \leq \frac{B}{\cosh d}.
    \end{equation*}
\end{proposition}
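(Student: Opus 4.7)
My plan is a maximum-principle argument on the flat disk $B_{|\varphi|}(p,d)$: first derive a semilinear Bochner inequality for $G(h)$, then compare with an explicit radial barrier.

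The first step is to establish the Bochner identity. In a natural coordinate $z=\xi+i\eta$ for $\varphi$ (in which $\varphi = dz^2$ and the $|\varphi|$-metric is the flat $d\xi^2 + d\eta^2$), harmonicity of $h$ combined with the constant curvature $-1$ of the target yields
\[
\Delta_{\xi,\eta}\,G(h) \;=\; 4\sinh G(h),
\]
where $\Delta_{\xi,\eta}$ is the flat Laplacian. This is the standard Weitzenböck-type identity $\Delta\log\mathcal{H}=2(\mathcal{H}-\mathcal{L})$, with $\mathcal{H}=\rho(h)|h_z|^2$ and $\mathcal{L}=\rho(h)|h_{\bar z}|^2$, combined with the Hopf-differential constraint $\mathcal{H}\mathcal{L} = |\varphi|^2 \equiv 1$ in natural coordinates and the identification $G=\tfrac{1}{2}\log(\mathcal{H}/\mathcal{L})$. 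Since $G\geq 0$ and $\sinh G \geq G$, this gives the scalar subelliptic inequality $\Delta_{\xi,\eta} G \geq 4G$ on $B_{|\varphi|}(p,d)$, which, being free of zeros of $\varphi$ and embedded, is isometric via natural coordinates to the Euclidean disk $D_d = \{|z|<d\}$.

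Next, I would compare $G$ with the radial supersolution
\[
\psi(z)\;:=\;B\,\frac{I_{0}(2|z|)}{I_{0}(2d)},
\]
where $I_0$ is the modified Bessel function of order zero. By construction $\psi$ satisfies $\Delta\psi = 4\psi$ in $D_d$ and $\psi \equiv B \geq G$ on $\partial D_d$. The function $u := G - \psi$ therefore satisfies $\Delta u \geq 4u$ in $D_d$ and $u \leq 0$ on $\partial D_d$; at any interior positive maximum, $\Delta u \leq 0$ would contradict $\Delta u \geq 4u > 0$, so $u \leq 0$ throughout $D_d$. Evaluating at the center gives
\[
G(h)(p)\;\leq\;\psi(0)\;=\;\frac{B}{I_{0}(2d)}\;\leq\;\frac{B}{\cosh d},
\]
the last step being the termwise comparison of power series $I_{0}(2d) = \sum_{k\geq 0} d^{2k}/(k!)^2 \geq \sum_{k\geq 0} d^{2k}/(2k)! = \cosh d$, which reduces to $\binom{2k}{k}\geq 1$.

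The only delicate point is the Bochner identity with the sharp coefficient $4$, where the curvature $-1$ hypothesis and the harmonic-map equation enter in an essential way; the rest is a routine application of the scalar maximum principle to a semilinear elliptic inequality, against an explicit Bessel-function barrier.
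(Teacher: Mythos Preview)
The paper itself gives no proof of this proposition; it is quoted verbatim from Minsky's paper and used as a black box. Your argument is correct and is precisely the route taken in Minsky's original proof: derive the Bochner-type equation $\Delta_{|\varphi|} G = 2\,\mathrm{Jac}(h) = 2(\mathcal H-\mathcal L)$, use $\mathcal H\mathcal L\equiv 1$ in natural coordinates to rewrite this as a sinh-Gordon equation for $G$, linearize via $\sinh G\ge G$, and compare with a radial solution of the modified Helmholtz equation on the flat disk. Your termwise estimate $I_0(2d)\ge\cosh d$ is exactly how the $\cosh d$ in the statement arises from the natural Bessel barrier.

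One small remark: you assert in passing that $B_{|\varphi|}(p,d)$ is embedded, but the hypothesis only guarantees it is zero-free. This is harmless---either pass to the universal cover of $X\setminus\varphi^{-1}(0)$, where the lifted ball is an honest Euclidean disk and $G$ lifts, or run the maximum principle on the developed disk; the conclusion descends. It would be worth one sentence to say this explicitly.
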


Using these propositions, we prepare some estimates we will use later.
Let $p_i$ be a puncture of $X$.
If $p_i$ is a double pole of $\varphi$, we can take a disk neighborhood $U_i$ of $p_i$ such that $(U_i\setminus \{p_i\},|\varphi|)$ is isometric to a Euclidean one-sided infinite annulus.
If $p_i$ is a pole of order higher than $2$, we can take a \textit{sink neighborhood} of $p_i$, which does not contain zeros of $\varphi$ (see Definition 2 \cite{gupta2019meromorphic}).
The sink neighborhood $U_i$ is broken into \textit{half-planes} and \textit{horizontal strips} by the horizontal measured foliation of $\varphi$.
Each half-plane is isometric to a Euclidean half plane $\{(x,y)\mid y> 0\}$ and each horizontal strip is isometric to a Euclidean strip $\{(x,y)\mid a<y<b \}$.

\begin{proposition}\label{proposition:estimateFromMinsky'sOne}
    We suppose that the length of any nontrivial loop around $p_i$ in $U_i\setminus\{p_i\}$ is at least $2C$ and $\partial U_i$ is of distance at least $C$ away from zeros of $\varphi$ in the $|\varphi|$-metric.
    Then, for each $p\in U_i\setminus\{p_i\}$,
    \begin{equation*}
        G(h)(p)\leq 2e^{-f(p)}\sinh^{-1}(\operatorname{Area}(Y)/2\pi C^2),
    \end{equation*}
    where $f(p)=\operatorname{dist}_{|\varphi|}(p,\varphi^{-1}(0))$.
\end{proposition}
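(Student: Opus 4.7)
The plan is to combine the rough bound of \Cref{Proposition:RoughBound} with the exponential decay of \Cref{Proposition:ExponentialEstimate}. Abbreviating $A := \sinh^{-1}(\operatorname{Area}(Y)/(2\pi C^2))$, the target inequality reads $G(h)(p) \leq 2 e^{-f(p)} A$. The factor $2 e^{-f(p)}$ is suggestive of the elementary inequality $1/\cosh(f(p)) = 2/(e^{f(p)}+e^{-f(p)}) \leq 2 e^{-f(p)}$, so the natural strategy is first to secure a \emph{uniform} bound $G(h) \leq A$ on the ball $B_{|\varphi|}(p, f(p))$, and then to feed this into \Cref{Proposition:ExponentialEstimate} at $p$ with $d = f(p)$.

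To produce the uniform bound, at each $q$ in the ball I invoke \Cref{Proposition:RoughBound} with radius $r = C$. This requires verifying that $B_{|\varphi|}(q, C)$ (i) contains no zero of $\varphi$ and (ii) embeds into $X$. Condition (i) follows from the hypothesis that $\partial U_i$ sits at $|\varphi|$-distance at least $C$ from the zero set: any $|\varphi|$-geodesic from $q\in U_i$ to a zero of $\varphi$ must cross $\partial U_i$ (since $U_i$ itself contains no zeros) and then continue for a further distance at least $C$, so $f(q)\geq C$, which gives (i). Condition (ii) is controlled by the loop-length hypothesis: within the flat sink geometry of $U_i\setminus\{p_i\}$, described in \Cref{subsection:GeometryOfHarmonicMaps} as a union of Euclidean half-planes and horizontal strips, the only way a $|\varphi|$-ball of radius $C$ can fail to embed is by wrap-around of a nontrivial loop around $p_i$, and every such loop has length at least $2C$.

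The main obstacle I anticipate is a careful verification of (ii) when the center $q$ lies close to, or outside of, $\partial U_i$, since the $C$-ball may then leave the sink neighborhood. To handle this I expect to exploit the explicit half-plane/strip decomposition together with the $C$-buffer between $\partial U_i$ and the zero set: this buffer prevents any short nontrivial loops from entering the relevant region and forces the only possible self-intersection of the ball to be a loop around $p_i$, which is ruled out by the loop-length hypothesis.

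Once the uniform estimate $G(h)\leq A$ on $B_{|\varphi|}(p,f(p))$ is in place, \Cref{Proposition:ExponentialEstimate} applied at $p$ with $d=f(p)$ immediately yields
\begin{equation*}
G(h)(p) \leq \frac{A}{\cosh(f(p))} \leq 2 A e^{-f(p)} = 2 e^{-f(p)} \sinh^{-1}\!\left(\frac{\operatorname{Area}(Y)}{2\pi C^2}\right),
\end{equation*}
which is the claimed bound.
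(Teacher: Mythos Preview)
Your strategy is exactly the one the paper has in mind: feed the rough bound of \Cref{Proposition:RoughBound} into the exponential decay of \Cref{Proposition:ExponentialEstimate} and finish with $1/\cosh x\le 2e^{-x}$. However, your verification of condition (i) has a genuine gap. You argue that $f(q)\ge C$ for $q\in U_i$ because a geodesic from such $q$ to a zero must first exit through $\partial U_i$; but the ball $B_{|\varphi|}(p,f(p))$ on which you need the uniform bound is \emph{not} contained in $U_i$. Indeed, since the zeros lie outside $U_i$ and $\partial U_i$ is itself at $|\varphi|$-distance $\ge C$ from them, one has $f(p)\ge \dist_{|\varphi|}(p,\partial U_i)+C$, so the $f(p)$-ball always overshoots $\partial U_i$. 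Along the $|\varphi|$-geodesic from $p$ to its nearest zero $z_0$ there are points $q\in B_{|\varphi|}(p,f(p))$ with $f(q)$ arbitrarily small; for those $q$ the ball $B_{|\varphi|}(q,C)$ contains $z_0$ and \Cref{Proposition:RoughBound} is simply unavailable at radius $C$. You flag this overshoot when discussing (ii), but it is equally fatal for (i), and in fact $G(h)$ blows up at $z_0$, so no uniform bound by $A$ can hold on all of $B_{|\varphi|}(p,f(p))$.

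The standard repair is to shrink the radius in \Cref{Proposition:ExponentialEstimate} to $d=f(p)-C$. Then every $q\in B_{|\varphi|}(p,f(p)-C)$ satisfies $f(q)\ge f(p)-d_{|\varphi|}(p,q)>C$ by the triangle inequality, so \Cref{Proposition:RoughBound} applies with $r=C$ throughout that smaller ball, and one obtains
\[
G(h)(p)\ \le\ \frac{A}{\cosh\bigl(f(p)-C\bigr)}\ \le\ 2e^{C}\,A\,e^{-f(p)},\qquad A=\sinh^{-1}\!\Bigl(\tfrac{\operatorname{Area}(Y)}{2\pi C^{2}}\Bigr).
\]
This differs from the stated inequality only by the harmless constant factor $e^{C}$, which is all that the subsequent applications (\Cref{lemma:estimateForInfintiteLines,lemma:GeneralCaseestimateForInfiniteLines}) actually require.
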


\section{Uniform convergence of distance functions: ideal polygon case}

\subsection{Main result}\label{subsection:StatementIdealPolygonsCase}

First, we state and prove the main theorem in the case of ideal polygons, since this is the simplest case and many of the methods of the proof are the same as in the general case.

Let $S$ be a topological ideal $k$-gon ($k\geq 3$), i.e.\,$S$ is a topological closed disk and we remove distinct $k$ points on the boundary.
In this case, the Riemann surface $X$ is equivalent to $\mathbb{C}P^1\setminus \{\infty\} \cong \C$ and $\varphi$ is a polynomial quadratic differential on $\C$.
The point at infinity is a pole of order $k+2$, and
the degree of the polynomial differential $\varphi$ equals $k-2$.
The principal part of $\varphi$ determines the direction data at $\infty$.

For $t>0$, let $d_t$ denote the distance function associated with the metric $(4t)^{-1}g_{t\varphi}$.
In the ideal polygon case, the metric $g_{t\varphi}$ is equal to the pullback metric ${h_{t\varphi}}^\ast C_{t\varphi}$ (see the last paragraph in \Cref{subsection:Parametrization}).
Let $I_\varphi$ denote the composite function of the collapsing map and the distance function of the dual $\R$-tree $T(\varphi)$ to $\mathcal{F}_{\varphi,\mathrm{vert}}$ i.e.\ we set $I_\varphi(x_1,x_2)=d_{T(\varphi)}(\pr_\varphi x_1,\pr_\varphi x_2)$.
Note that, in the ideal polygon case, the dual $\R$-tree is a simplicial metric tree.

\begin{theorem}\label{theorem:IdealPolygonsCase}
    The family of distance functions $d_t$ uniformly converges to $I_\varphi$ on $X=\C$.
\end{theorem}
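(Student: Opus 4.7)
The plan is to rewrite $d_t$ explicitly in natural coordinates of $\varphi$. On a natural coordinate patch with $\zeta = \xi + i\eta$ and $\varphi = d\zeta^2$, the relation between the pullback metric, the Hopf differential, and the function $G_{t\varphi}$ recalled in \Cref{subsection:GeometryOfHarmonicMaps} gives, by a direct computation,
\[
{h_{t\varphi}}^\ast C_{t\varphi} \;=\; 4t\,\bigl[\cosh^2(G_{t\varphi}/2)\,d\xi^2 + \sinh^2(G_{t\varphi}/2)\,d\eta^2\bigr],
\]
so $d_t$ has infinitesimal length element $\sqrt{\cosh^2(G_{t\varphi}/2)\,d\xi^2 + \sinh^2(G_{t\varphi}/2)\,d\eta^2}$ in the natural coordinates of $\varphi$. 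In this form the theorem reduces to showing that $G_{t\varphi}$ decays fast enough on all of $\C$ as $t\to\infty$.

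The lower bound $d_t(x_1,x_2) \geq I_\varphi(x_1,x_2)$ follows immediately: for any path $\gamma$ from $x_1$ to $x_2$, using $\cosh \geq 1$,
\[
\mathrm{length}_{d_t}(\gamma) \;\geq\; \int_\gamma \cosh(G_{t\varphi}/2)\,|d\xi| \;\geq\; \int_\gamma |d\xi| \;\geq\; d_{T(\varphi)}(\pr_\varphi x_1,\pr_\varphi x_2) = I_\varphi(x_1,x_2),
\]
since $\pr_\varphi\circ\gamma$ is a path in $T(\varphi)$ joining the two projected points of length $\int_\gamma|d\xi|$. For the upper bound, I would take a canonical path $\gamma_0 \subset \C$ that projects under $\pr_\varphi$ to the geodesic in $T(\varphi)$ between $\pr_\varphi x_1$ and $\pr_\varphi x_2$, realized as a concatenation of finitely many horizontal leaf segments of $\varphi$ meeting at singular points of the vertical foliation. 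Along such $\gamma_0$ one has $d\eta \equiv 0$ and $\int_{\gamma_0}|d\xi| = I_\varphi(x_1,x_2)$, so
\[
\mathrm{length}_{d_t}(\gamma_0) - I_\varphi(x_1,x_2) \;=\; \int_{\gamma_0}\bigl(\cosh(G_{t\varphi}/2)-1\bigr)\,|d\xi|,
\]
and the task becomes bounding this excess uniformly in $(x_1,x_2)$.

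The uniform decay of $G_{t\varphi}$ needed here comes from \Cref{Proposition:RoughBound} and \Cref{proposition:estimateFromMinsky'sOne} applied in the $|t\varphi|$-metric, which rescales $|\varphi|$-distances by $\sqrt t$. Since $\varphi$ is polynomial, its zeros form a finite set, the complement of a fixed large disk is a sink neighborhood of $\infty$ decomposed into finitely many half-planes and horizontal strips, and $\mathrm{Area}(C_{t\varphi}) = (k-2)\pi$ is $t$-independent. Combining these inputs, \Cref{proposition:estimateFromMinsky'sOne} yields on the sink neighborhood a bound of the form
\[
G_{t\varphi}(p) \;\lesssim\; t^{-1}\,\exp\!\bigl(-\sqrt t\,\dist_{|\varphi|}(p,\varphi^{-1}(0))\bigr),
\]
while \Cref{Proposition:RoughBound} yields a uniform decay on the compact complement away from zeros. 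Along any horizontal leaf inside a half-plane or horizontal strip the quantity $\dist_{|\varphi|}(\cdot,\varphi^{-1}(0))$ grows essentially linearly in arc length, so the sink-neighborhood contribution to the excess is $O(t^{-5/2})$ uniformly in the endpoints.

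The main obstacle is the uniformity over the non-compact $\C$: both $d_t$ and $I_\varphi$ can be arbitrarily large and one must show their difference is uniformly small. Two effects need care: (i) the excess along horizontal leaves extending to $\infty$, for which the exponential decay of $G_{t\varphi}$ in $\sqrt t\cdot\dist_{|\varphi|}(\cdot,\varphi^{-1}(0))$ converts potentially divergent contributions into uniformly vanishing ones, and (ii) the passage of $\gamma_0$ through zeros of $\varphi$, where $G_{t\varphi}$ fails to be small. For (ii), $\gamma_0$ can be perturbed to a nearby path making small detours of $|\varphi|$-diameter $O(t^{-1/4})$ around each zero; on such a detour \Cref{proposition:estimateFromMinsky'sOne} forces $G_{t\varphi} \lesssim t^{-1}e^{-t^{1/4}}$, so the combined contribution of extra transverse measure and of $\int\sinh(G_{t\varphi}/2)|d\eta|$ is $O(t^{-1/4})$ per zero. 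Since $\gamma_0$ crosses at most $k-2$ zeros, the total detour cost is $O(t^{-1/4})$ uniformly in $(x_1,x_2)$, and combining with the lower bound yields $d_t \to I_\varphi$ uniformly on $\C\times\C$.
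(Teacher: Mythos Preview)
Your lower bound and the overall strategy---write $g_t$ in natural coordinates and control the excess via the decay of $G_{t\varphi}$ coming from \Cref{Proposition:RoughBound} and \Cref{proposition:estimateFromMinsky'sOne}---are correct and match the paper. The gap is in the construction of the comparison path $\gamma_0$.

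A path from $x_1$ to $x_2$ that is a concatenation of horizontal leaf segments with $d\eta\equiv 0$ and $\int_{\gamma_0}|d\xi|=I_\varphi(x_1,x_2)$ simply does not exist in general. Already when $x_1,x_2$ lie on the same vertical leaf (so $I_\varphi(x_1,x_2)=0$) no nonconstant horizontal arc joins them. More generally, the horizontal leaf through $x_1$ need not hit any zero of $\varphi$, so you cannot change horizontal direction without first inserting a vertical segment along a singular leaf; and even after you reach the vertical leaf of $x_2$ you must travel vertically to reach $x_2$ itself. These vertical pieces can have \emph{arbitrarily large} $|\varphi|$-length (take $x_1,x_2$ far out in distinct half-planes), so they are not accounted for by your $O(t^{-1/4})$ detours around zeros. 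Your items (i) and (ii) control the horizontal excess and the behaviour near zeros, but the long vertical connectors are a third, separate contribution that you have not bounded.

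The paper's remedy is to start not from a hypothetical horizontal lift of the tree geodesic but from the $|\varphi|$-geodesic $\Gamma_1$ between $x_1$ and $x_2$, and then deform it to a \emph{staircase curve} $\Gamma_2$ built from horizontal and vertical segments with the same horizontal and vertical measures as $\Gamma_1$. The horizontal measure of $\Gamma_2$ equals $I_\varphi(x_1,x_2)$ automatically; the vertical measure can be huge, but its $g_t$-length is controlled in two regimes: inside a fixed sink neighborhood $U$ of $\infty$ each vertical ray has $g_t$-length at most $De^{-C\sqrt t}$ by integrating the exponential decay of $G_{t\varphi}$ along the ray (\Cref{lemma:estimateForInfintiteLines}), while outside $U$ the total vertical measure is bounded by a constant $K$ depending only on the compact set $X\setminus U$, and \Cref{lemma:estimateForCompactArcs} converts this to $g_t$-length at most $KDe^{-\varepsilon\sqrt t}$. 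Once these vertical estimates are in place, your treatment of the horizontal excess and of the neighbourhoods of zeros finishes the proof. Your choice of a $t$-dependent detour radius $t^{-1/4}$ is in fact a slightly cleaner variant of the paper's fixed-$\varepsilon$ detour, but it does not substitute for the missing control of the long vertical segments.
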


This theorem tells us that, for each $\varepsilon>0$, the collapsing map $\pr_\varphi$ is an \textit{$\varepsilon$-approximation} between $(X,d_t)$ and $(T(\varphi),d_{T(\varphi)})$ by taking a sufficiently large $t$, i.e. there exists $t_1>0$ such that, for each $t>t_1$ and each two points $x_1,x_2\in X$, 
\begin{equation}\label{eq:Approximation}
    |d_t(x_1,x_2)-d_{T(\varphi)}(\pr_\varphi x_1,\pr_\varphi x_2)|<\varepsilon.
\end{equation}

Following \cite{bridson1999metric}, we recall the definition of the Gromov-Hausdorff convergence.

\begin{definition}
    Let $(X,d)$ and $(Y,d')$ be metric spaces.
    For $\varepsilon>0$, the relation $R\subset X\times Y$ is called $\varepsilon$-\textit{relation} between $(X,d)$ and $(Y,d')$ if $R$ satisfies the following conditions:
    \begin{enumerate}
        \item the relation $R$ is surjective i.e. $\pr_1(R)=X, \pr_2(R)=Y$,
        \item if $(x_1,y_1),(x_2,y_2)\in R$, then $|d(x_1,x_2)-d(y_1,y_2)|<\varepsilon$.
    \end{enumerate}

    A sequence of metric spaces $\{(X_t,d_t)\}_{t\in \R}$ converges to a metric  space $(Y,d)$ \textit{in the sense of Gromov-Hausdorff} as $t\to \infty$, if it satisfies the following: for each $\varepsilon>0$, there exists $t_1\in \R$ such that, for every $t>t_1$, there exists an $\varepsilon$-relation $R_t$ between $(X_t,d_t)$ and $(Y,d')$.
\end{definition}

Setting $R=\{(x,y)\in X\times T(\varphi)\mid y=\pr_\varphi x\}$, we find that, for each $\varepsilon>0$, $R$ is an $\varepsilon$-relation between $(X,d_t)$ and $(T(\varphi),d_{T(\varphi)})$ for every $t>t_1$ from \Cref{eq:Approximation}. Therefore we obtain the following corollary.

\begin{corollary}\label{corollary:GromovHausdorff}
    The metric space $(X,d_t)$ converges to $(T(\varphi),d_{T(\varphi)})$ in the sense of Gromov-Hausdorff.
\end{corollary}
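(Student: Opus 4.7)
The plan is to directly unpack the definition of Gromov-Hausdorff convergence and exhibit the graph of the collapsing map as the required $\varepsilon$-relation, using \Cref{theorem:IdealPolygonsCase} as the sole input. Concretely, I would take
\[
R = \{(x, \pr_\varphi x) : x \in X\} \subset X \times T(\varphi)
\]
and verify, for every $\varepsilon > 0$ and every sufficiently large $t$, that $R$ satisfies the two defining conditions of an $\varepsilon$-relation between $(X, d_t)$ and $(T(\varphi), d_{T(\varphi)})$.

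For the surjectivity condition, the projection $\pr_1(R) = X$ is immediate because $R$ is the graph of a function defined on all of $X$. The projection $\pr_2(R) = T(\varphi)$ holds because the collapsing map $\pr_\varphi \colon X \to T(\varphi)$ is surjective by the definition of the leaf space given in \Cref{subsection:MeasuredFoliationAndDualR-tree}; every point of $T(\varphi)$ corresponds to a leaf of $\widetilde{\mathcal{F}}_{\varphi,\mathrm{vert}}$, which contains at least one point of $X$.

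For the metric-comparison condition, given two elements $(x_1, \pr_\varphi x_1), (x_2, \pr_\varphi x_2) \in R$, we have, by the very definition of $I_\varphi$ in \Cref{subsection:StatementIdealPolygonsCase},
\[
d_{T(\varphi)}(\pr_\varphi x_1, \pr_\varphi x_2) = I_\varphi(x_1, x_2).
\]
Hence $|d_t(x_1, x_2) - d_{T(\varphi)}(\pr_\varphi x_1, \pr_\varphi x_2)| = |d_t(x_1, x_2) - I_\varphi(x_1, x_2)|$. By \Cref{theorem:IdealPolygonsCase}, for each $\varepsilon > 0$ there exists $t_1 > 0$ such that $|d_t(x_1,x_2) - I_\varphi(x_1,x_2)| < \varepsilon$ for all $t > t_1$ and all $(x_1, x_2) \in X \times X$. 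This is exactly what the $\varepsilon$-relation condition requires.

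Combining these two checks, for every $\varepsilon > 0$ and every $t > t_1$ the relation $R$ is an $\varepsilon$-relation between $(X, d_t)$ and $(T(\varphi), d_{T(\varphi)})$, which by definition yields the Gromov-Hausdorff convergence. I anticipate no substantive obstacle: the corollary is essentially a translation of the uniform convergence statement in \Cref{theorem:IdealPolygonsCase} into the language of $\varepsilon$-relations, and the relevant $\varepsilon$-relation $R$ has already been singled out in the paragraph preceding the statement.
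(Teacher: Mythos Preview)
Your proposal is correct and follows essentially the same approach as the paper: both take $R=\{(x,\pr_\varphi x)\mid x\in X\}$ as the $\varepsilon$-relation and invoke \Cref{theorem:IdealPolygonsCase} (via \Cref{eq:Approximation}) for the metric-comparison condition. The only difference is that you spell out the surjectivity check explicitly, whereas the paper leaves it implicit.
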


\subsection{One side of the inequality}
In this section, we prepare some lemmas to prove \Cref{theorem:IdealPolygonsCase}. 
For simplicity,
we denote the harmonic map $h_{t\varphi}\colon X\to C_{t\varphi}$ by $h_t$, the function $G(h_t)$ by $G(t)$ (see \Cref{subsection:GeometryOfHarmonicMaps}), and the metric $(4t)^{-1}g_{t\varphi}$ by $g_t$.

\begin{lemma}\label{lemma:IntersectionNumbersBoundedFromDistance}
    For each $t>0$, the inequality $I_\varphi<d_t$ holds on $X$.
\end{lemma}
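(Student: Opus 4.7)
My plan is to work in natural coordinates of $\varphi$ and exploit the explicit decomposition of $g_t$. Let $\zeta = \xi + i\eta$ be a natural coordinate of $\varphi$ (so $\varphi = d\zeta^2$) defined on a neighborhood of a nonzero point. A standard computation for harmonic maps---using the Hopf differential expression $\varphi = \rho h_z \bar h_{\bar z}\, dz^2$ and the relation $|\nu(h_t)| = e^{-G(t)}$---shows that in the corresponding natural coordinate $\zeta_t = \sqrt{t}\,\zeta$ of $t\varphi$,
\begin{equation*}
h_t^{\ast} C_{t\varphi} = 4\cosh^2\!\bigl(G(t)/2\bigr)\, d\xi_t^2 + 4\sinh^2\!\bigl(G(t)/2\bigr)\, d\eta_t^2.
\end{equation*}
After rescaling by $1/(4t)$ and converting back to $\varphi$-natural coordinates via $d\xi_t = \sqrt{t}\, d\xi$, this becomes
\begin{equation*}
g_t = \cosh^2\!\bigl(G(t)/2\bigr)\, d\xi^2 + \sinh^2\!\bigl(G(t)/2\bigr)\, d\eta^2,
\end{equation*}
while the transverse measure of $\mathcal{F}_{\varphi,\mathrm{vert}}$ in these coordinates is simply $|d\xi|$.

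Given $x_1, x_2 \in X = \mathbb{C}$, the metric $g_t$ is complete (the hyperbolic ideal polygon $C_{t\varphi}$ is complete in its interior and $h_t$ is a diffeomorphism), so a $g_t$-minimizing geodesic $\gamma_t$ from $x_1$ to $x_2$ exists. The elementary pointwise bound $\sqrt{A^2 u^2 + B^2 v^2} \geq A|u|$ then yields
\begin{equation*}
d_t(x_1, x_2) = \mathrm{length}_{g_t}(\gamma_t) \geq \int_{\gamma_t}\cosh\!\bigl(G(t)/2\bigr)\,|d\xi| \geq \int_{\gamma_t}|d\xi| \geq I_\varphi(x_1, x_2).
\end{equation*}
Since $h_t$ is a harmonic diffeomorphism its Jacobian vanishes nowhere, so $G(t) > 0$ on $X \setminus \varphi^{-1}(0)$, hence $\cosh(G(t)/2) > 1$ off this finite set. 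If $\int_{\gamma_t}|d\xi| > 0$, the middle inequality is therefore strict. Otherwise $\gamma_t$ lies entirely in a single leaf of $\mathcal{F}_{\varphi,\mathrm{vert}}$, so $\pr_\varphi x_1 = \pr_\varphi x_2$ and $I_\varphi(x_1, x_2) = 0 < d_t(x_1, x_2)$ (when $x_1 \neq x_2$). In every case $I_\varphi(x_1, x_2) < d_t(x_1, x_2)$.

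The only real obstacle is the standard care required at zeros of $\varphi$: the formula for $g_t$ only holds on $X \setminus \varphi^{-1}(0)$, and $\gamma_t$ may in principle pass through zeros. Since $\varphi^{-1}(0)$ is discrete (hence of measure zero for the one-dimensional integrals along $\gamma_t$), the displayed estimates are unaffected; one only needs a mild approximation argument to handle integration across these isolated points. The substantive content of the proof is thus the metric decomposition in natural coordinates together with the pointwise positivity $G(t) > 0$.
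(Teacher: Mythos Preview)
Your proof is correct and takes essentially the same approach as the paper: both use the natural-coordinate expression of $g_t$ (your $\cosh^2(G(t)/2)\,d\xi^2+\sinh^2(G(t)/2)\,d\eta^2$ is exactly the paper's $\frac{\cosh G(t)+1}{2}\,d\xi^2+\frac{\cosh G(t)-1}{2}\,d\eta^2$ via the half-angle identities) together with the pointwise bound $ds_{g_t}\geq |d\xi|$ along a $g_t$-geodesic. You actually go slightly further than the paper, which stops at the non-strict chain of inequalities, by supplying the strictness argument from $G(t)>0$ off the zero set and by flagging the (harmless) technicality at zeros of $\varphi$.
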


\begin{proof}
    We take any two points $x_1,x_2\in X$.
    By the definition of the distance on the dual $\R$-tree, we have
    \begin{equation*}
        I_\varphi(x_1,x_2)=\inf\left\{\int_{\gamma} |d\xi| \,\middle|\, \gamma \text{ is a curve connecting } x_1 \text{ and }x_2 \right\},
    \end{equation*}
    where $\zeta=\xi+i\eta$ is natural coordinates of $\varphi$.
    In the coordinates $\zeta$, the metric $g_t$ is represented by 
    \begin{equation}
        \label{eq:LocalRepresentationOftheMetric}
        ds_{g_t}^2 = \frac{\cosh G(t) +1}{2}\,d\xi^2+\frac{\cosh G(t) -1}{2}\,d\eta^2
    \end{equation}
    (see \cite[Section 3]{minsky1992harmonic}).
    From this representation, since we find $ds_{g_t}^2\geq d\xi^2$.
    Let $\gamma_t\colon [0,1]\to X$ be a $g_t$-geodesic segment from $x_1$ to $x_2$.
    Since $\int|d\xi|$ defines the transverse measure of the vertical measured foliation of $\varphi$, for each $t>0$, we have
    \begin{equation*}
        d_t(x_1,x_2)=\ell_{g_t}(\gamma_t)=\int_\gamma ds_{g_t}\geq \int_\gamma|d\xi|\geq I_\varphi(x_1,x_2).
    \end{equation*}
    Thus, we obtain the conclusion.
\end{proof}

\begin{remark}
    Since the above proof works similarly, we find the following: Let $(S,\rho)$ be a surface with a convex metric, $h\colon X\to (S,\rho)$ be a harmonic diffeomorphism and $\varphi$ be its Hopf differential.
    Then, the inequality $d_{h^\ast \rho}> I_\varphi$ holds on $X$, where $d_{h^\ast \rho}$ denotes the distance function associated with the pullback metric $h^\ast \rho$.
\end{remark}

\subsection{Estimate for vertical and horizontal lines}

We next estimate the hyperbolic length of a vertical or horizontal line of $\varphi$.

\begin{lemma}\label{lemma:estimateForInfintiteLines}
    Let $U$ be a sufficiently small sink neighborhood (see \Cref{subsection:GeometryOfHarmonicMaps}) of $\infty$ with respect to $\varphi$ so that the length of any nontrivial loop around $\infty$ in $U\setminus \{\infty\}$ is at least $2C$ and $\partial U$ is of distance at least $C$ away from $\varphi^{-1}(0)$ in the $|\varphi|$-metric.
    Then, there exists a sufficiently large $t_0>0$ such that, for every $t>t_0$, there exists a constant $D>0$ depending only on $\varphi$ such that the following hold:
    \begin{enumerate}
        \item Let $H\colon [0,\infty)\to X$ be a horizontal line of $\varphi$, i.e.\ a curve whose image is in a horizontal leaf of $\varphi$.
        We suppose that $H$ is parametrized by the $|\varphi|$-arc length and is contained in $U$.
        Then, for each $a\geq0$, 
        \begin{equation}
            \ell_{g_t}(H|_{[0,a]})-i(\mathcal{F}_{\varphi,\mathrm{vert}},H|_{[0,a]}) <De^{-C\sqrt{t}}\label{eq:HorizontalInfiniteSegment}
        \end{equation}
        \item Let $V\colon [0,\infty)\to X$ be a vertical line of $\varphi$, i.e.\ a curve whose image is in a vertical leaf of $\varphi$.
        We suppose that $V$ is parametrized by the $|\varphi|$-arc length and contained in $U$. Then,
        \begin{equation}
            \ell_{g_t}(V) < De^{-C\sqrt{t}}. \label{eq:VerticalInfiniteSegment}
        \end{equation}
    \end{enumerate}
\end{lemma}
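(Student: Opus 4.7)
The plan is to combine Minsky's exponential decay estimate (\Cref{proposition:estimateFromMinsky'sOne}) with the local representation of the metric $g_t$ in natural coordinates of $\varphi$ given in \Cref{eq:LocalRepresentationOftheMetric}. Along a horizontal line ($d\eta=0$, $d\xi=ds$),
\begin{equation*}
    \ell_{g_t}(H|_{[0,a]}) - i(\mathcal{F}_{\varphi,\mathrm{vert}}, H|_{[0,a]}) = \int_0^a \bigl(\cosh(G(t)/2) - 1\bigr)\,ds,
\end{equation*}
and along a vertical line ($d\xi=0$), $\ell_{g_t}(V) = \int_0^\infty \sinh(G(t)/2)\,ds$. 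Since $\cosh(x/2)-1$ and $\sinh(x/2)$ are bounded by constant multiples of $x^2$ and $x$ respectively for $|x|\leq 1$, it suffices to estimate $\int G(t)^2\,ds$ on horizontal lines and $\int G(t)\,ds$ on vertical lines.

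To control $G(t)=G(h_t)$ on $U$, I apply \Cref{proposition:estimateFromMinsky'sOne} to $h_t$, whose Hopf differential is $t\varphi$. Because the $|t\varphi|$-metric equals $\sqrt{t}$ times the $|\varphi|$-metric, the hypothesis on $\partial U$ holds with $\sqrt{t}C$ in place of $C$, the distance to zeros becomes $\sqrt{t}f$, and $\operatorname{Area}(C_{t\varphi})=A$ is topologically fixed by Gauss--Bonnet. This yields
\begin{equation*}
    G(t)(p) \leq 2 e^{-\sqrt{t}\,f(p)}\sinh^{-1}\!\left(\frac{A}{2\pi t C^2}\right) \leq \frac{A'}{t}\,e^{-\sqrt{t}\,f(p)}
\end{equation*}
for $p\in U$, $t\geq t_0$, and $A'$ depending only on $\varphi$.

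The main obstacle is the geometric input needed to integrate this pointwise bound along an entire horizontal or vertical line: a linear lower bound of the form $f(H(s)), f(V(s)) \geq C + \max(0, s-R)$ with $R$ depending only on $\varphi$. I plan to extract this from the decomposition of the sink neighborhood $U$ into half-planes and horizontal strips, each isometric in its natural coordinate to a Euclidean region with the finitely many zeros of $\varphi$ at fixed positions on its boundary. A horizontal or vertical line contained in $U$ and parametrized by $|\varphi|$-arc length is a Euclidean geodesic inside the piece it enters and must escape to $\infty$ within that piece, so its $|\varphi|$-distance to the zeros on the piece's boundary grows linearly in $s$ once $s$ exceeds some threshold $R$; for $s\in [0,R]$ the hypothesis $f\geq C$ on $U$ suffices. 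Granted this, splitting the integrals at $s=R$ gives $\int e^{-k\sqrt{t}f(\cdot)}\,ds \leq (R+1)e^{-k\sqrt{t}C}$ for $k=1,2$. Combining with the pointwise bound on $G(t)$, the two length differences are dominated by $(R+1)(A'/t)^2 e^{-2\sqrt{t}C}$ and $(R+1)(A'/t)e^{-\sqrt{t}C}$; absorbing the polynomial factors of $1/t$ into the exponential for $t \geq t_0$ produces bounds of the form $De^{-\sqrt{t}C}$ with $D$ depending only on $\varphi$.
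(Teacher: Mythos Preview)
Your approach is essentially the paper's: both feed \Cref{proposition:estimateFromMinsky'sOne} into the natural-coordinate expression for $g_t$, establish a linear lower bound $f(\Gamma(s))\geq C+s$ on the infinite tail of the line, and integrate; where you bound $\sinh(G/2)\lesssim G$ and $\cosh(G/2)-1\lesssim G^2$ directly, the paper packages the same estimate as $\sqrt{\cosh(Be^{-x})-1}\leq Ae^{-x}$ and reduces the horizontal case to the vertical one via $\sqrt{\cosh G+1}<\sqrt{\cosh G-1}+\sqrt{2}$. One imprecision in your geometric sketch: the zeros of $\varphi$ are not ``on the boundary'' of the half-plane or strip pieces of $U$ --- they lie outside $U$ altogether --- so the linear growth of $f$ comes (as in the paper) from observing that any path from $\Gamma(s)$ to a zero must first exit the maximal half-plane containing the tail of $\Gamma$ through its bounding line perpendicular to $\Gamma$ (distance $s$), and then travel at least $C$ further.
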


\begin{proof}
    We may assume $\Gamma(s)$ is a maximal horizontal (or vertical) line that is contained in $U$.
    If $\Gamma(s)$ is contained in a half-plane domain, it is a bi-infinite line, and if $\Gamma(s)$ is contained in a strip domain, it is a one-sided infinite line.

    We first consider the case that $\Gamma(s)$ is a horizontal line.
    Let $P_1,\ldots,P_{k}$ be maximal half-plane domains bounded by vertical lines contained in $U$ (in \Cref{figure:Half-PlaneDomain}, these are domains bounded by dotted lines).
    We cyclically put the index for $P_1,\ldots,P_k$.
    Then, there exist adjacent $P_i$ and $P_{i+1}$ decomposing $\Gamma(s)$ into a finite arc and at most two one-sided infinite lines.
    The $|\varphi|$-length of the finite arc is at most $2W$ of the width of strip domains which lies between $P_i$ and $P_{i+1}$.
    Let $\Gamma_1\colon [0,\infty)\to U$ be the one-sided infinite horizontal line.
    Then, the function $f\circ\Gamma_1(s)=d_{|\varphi|}(\Gamma_1(s),\varphi^{-1}(0)) $ is larger than $C+s$. 
    In the case that $\Gamma(s)$ is a vertical line as well, the same thing holds true.
    
    \begin{figure}
        \centering
        \begin{overpic}[scale=0.8]{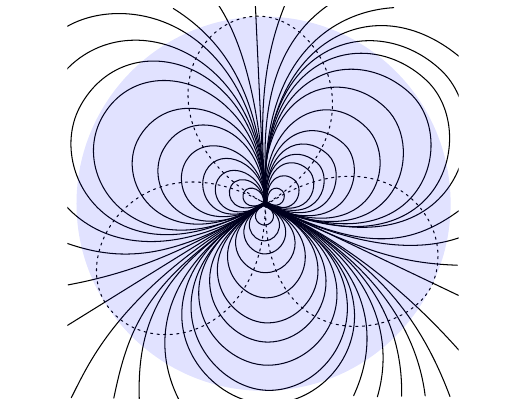}
        \end{overpic}
        \caption{The blue colored domain is $U$. The dotted lines denote the vertical lines bounding maximal half-plane domains.}
        \label{figure:Half-PlaneDomain}
    \end{figure}

    Here we introduce an inequality used later: for any constant $B>0$, there exist constants $A>0$ and $x_0>0$ such that
    \begin{equation}\label{eq:BasicInequality}
        \sqrt{\cosh(B e^{-x})-1} \leq A e^{-x}\ \ \ \ (x>x_0).
    \end{equation}
    Since this inequality is equivalent to
    \begin{equation*}
        B\leq e^x\cosh^{-1}(A^2e^{-2x}+1),
    \end{equation*}
    and the right hand side converges to $\sqrt{2}A$ as $x\to \infty$, the inequality \Cref{eq:BasicInequality} holds by taking $A=B/\sqrt{2}+1$.

    We here consider the case where $\Gamma$ is an infinite vertical line $V$, and for simplicity we suppose that $\Gamma$ is a bi-infinite line.
    Then, setting $B=2\sinh^{-1}((k-2)\pi/2\pi C^2)$, we have 
    \begin{align*}
        \ell_{g_t}(V) &= \frac1{\sqrt{2}}\int_V \sqrt{\cosh G(t)-1}\,d\eta \\
        &\leq \frac1{\sqrt{2}}\int_V \sqrt{\cosh(Be^{-f \sqrt{t}} )-1}\,d\eta 
        & (\text{\Cref{proposition:estimateFromMinsky'sOne}}) \\
        &\leq \frac{A}{\sqrt{2}} \int_V e^{-f\sqrt{t}}\,d\eta & \text{\Cref{eq:BasicInequality}},
    \end{align*}
    where the final inequality holds if $t>(x_0/C)^2$, since $f\circ V(s)>C$.
    We now have 
    \begin{align*}
        \int_V e^{-\sqrt{t}f} \,d\eta &=\int_{-W}^W e^{-f\circ V(s)\sqrt{t}}\,ds+2\int_W^\infty e^{-f\circ V(s)\sqrt{t}}\,ds \\
        &\leq \int_{-W}^W e^{-C\sqrt{t}}\,ds+2\int_W^\infty e^{-(s+C)\sqrt{t}}\,ds\\
        &\leq e^{-C \sqrt{t}} \left(2W+\frac{1}{\sqrt{t}}\right).
    \end{align*}
    We thus obtain \Cref{eq:VerticalInfiniteSegment}.

    In the case where $\Gamma$ is a horizontal line $H$, we find
    \begin{align*}
        \ell_{g_t}(H|_{[0,a]}) &= \frac{1}{\sqrt{2}} \int_{H|_{[0,a]}} \sqrt{\cosh G(t) +1}\,d\xi \\
        &\leq \frac1{\sqrt{2}}\int_{H|_{[0,a]}} \sqrt{\cosh G(t)-1}\,d\xi+\int_{H|_{[0,a]}}\,d\xi,
    \end{align*}
    since $\sqrt{\cosh G(t) +1}<\sqrt{\cosh G(t)-1}+\sqrt{2}$.
    Therefore we have
    \begin{equation*}
        \ell_{g_t}(H|_{[0,a]})-i(\mathcal{F}_{\varphi,\mathrm{vert}},H|_{[0,a]})\leq \frac1{\sqrt{2}}\int_{H|_{[0,a]}} \sqrt{\cosh G(t)-1}\,d\xi.
    \end{equation*}
    Since we can evaluate the right hand side in a similar manner as the case of a vertical line, \Cref{eq:HorizontalInfiniteSegment} holds.
\end{proof}

We also have the following estimate, which is stated in \cite{wolf1991high} and, it can be proven in a similar way as the above proof.

\begin{lemma}[{\cite[Lemma 2.3]{wolf1991high}}]
    \label{lemma:estimateForCompactArcs}
    Let $H$ and $V$ be a compact horizontal and vertical arc of $\varphi$, respectively.
    If $H$ and $V$ avoid an $\varepsilon$-neighborhood of $\varphi^{-1}(0)$ in the $|\varphi|$-metric, then 
    \begin{equation*}
        \ell_{g_t}(H)\leq (1+De^{-\varepsilon \sqrt{t}})\ell_{|\varphi|}(H),\ \  \ell_{g_t}(V)\leq De^{-\varepsilon \sqrt{t}}\ell_{|\varphi|}(V) ,
    \end{equation*}
    where the constant $D$ depends only on $\varepsilon$.
\end{lemma}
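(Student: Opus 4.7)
The plan is to mimic the argument for \Cref{lemma:estimateForInfintiteLines}, but in a simpler setting since the arcs are compact and do not need to be split into bounded and unbounded pieces. The two ingredients are a uniform exponential bound on $G(t)$ along the arcs $H$ and $V$, and the local representation \Cref{eq:LocalRepresentationOftheMetric} of the rescaled metric $g_t$ in the natural coordinates of $\varphi$.

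First, I would establish an estimate of the form $G(t)(p)\leq D_1 e^{-\varepsilon\sqrt{t}}$ for $p\in H\cup V$. The argument is the standard Wolf--Minsky combination of \Cref{Proposition:RoughBound} and \Cref{Proposition:ExponentialEstimate}, applied to $h_t$, whose Hopf differential is $t\varphi$ so that the $|t\varphi|$-distance to $\varphi^{-1}(0)$ along $H\cup V$ is at least $\varepsilon\sqrt{t}$. Concretely, for any $q$ in a $|t\varphi|$-ball of appropriate radius around $p$, \Cref{Proposition:RoughBound} bounds $G(t)(q)$ by some constant $B$ (depending on $\operatorname{Area}(C_{t\varphi})$ and $\varepsilon$, going to $0$ as $t\to\infty$), and then \Cref{Proposition:ExponentialEstimate} upgrades this into the desired pointwise exponential decay at $p$, uniformly in $p\in H\cup V$.

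Next, I would compute the two lengths using \Cref{eq:LocalRepresentationOftheMetric}. For the vertical arc,
\begin{equation*}
    \ell_{g_t}(V)=\frac1{\sqrt{2}}\int_V\sqrt{\cosh G(t)-1}\,d\eta,
\end{equation*}
and applying the elementary inequality $\sqrt{\cosh(Be^{-x})-1}\leq Ae^{-x}$ used in the proof of \Cref{lemma:estimateForInfintiteLines} with $x=\varepsilon\sqrt{t}$ gives $\sqrt{\cosh G(t)-1}\leq D_2 e^{-\varepsilon\sqrt{t}}$ pointwise on $V$, so that integrating $d\eta$ against this bound yields $\ell_{g_t}(V)\leq D_2 e^{-\varepsilon\sqrt{t}}\,\ell_{|\varphi|}(V)$. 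For the horizontal arc, the inequality $\sqrt{\cosh G(t)+1}\leq\sqrt{\cosh G(t)-1}+\sqrt{2}$ leads to
\begin{equation*}
    \ell_{g_t}(H)\leq \ell_{|\varphi|}(H)+\tfrac{1}{\sqrt{2}}\int_H\sqrt{\cosh G(t)-1}\,d\xi\leq\bigl(1+D_2 e^{-\varepsilon\sqrt{t}}\bigr)\ell_{|\varphi|}(H),
\end{equation*}
which is the claimed bound after relabeling the constant.

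The only real subtlety, which I view as the main bookkeeping obstacle, is that the naive application of \Cref{Proposition:RoughBound} on a ball of $|t\varphi|$-radius $\varepsilon\sqrt{t}/2$ followed by \Cref{Proposition:ExponentialEstimate} at the center produces the rate $e^{-\varepsilon\sqrt{t}/2}$ rather than $e^{-\varepsilon\sqrt{t}}$. To recover the exponent stated in the lemma one should either weaken $\varepsilon$ in the intermediate bounds (working with a slightly larger zero-free neighborhood and paying for it in $D$), or, following Wolf, apply \Cref{Proposition:ExponentialEstimate} together with a slightly sharpened form of $1/\cosh d\lesssim e^{-d}$. Either way, $D$ will depend only on $\varepsilon$ and on the fixed quantities $\operatorname{Area}(C_{t\varphi})$ and $\deg\varphi$, so the dependence claimed in the statement is preserved.
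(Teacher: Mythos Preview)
Your proposal is correct and follows precisely the approach the paper indicates: the paper does not give an independent proof of this lemma but simply remarks that it ``can be proven in a similar way as the above proof'' of \Cref{lemma:estimateForInfintiteLines}, and your argument is exactly that---combine the Wolf--Minsky bounds (\Cref{Proposition:RoughBound,Proposition:ExponentialEstimate}) to get $G(t)\lesssim e^{-\varepsilon\sqrt t}$ on $H\cup V$, then feed this into \Cref{eq:LocalRepresentationOftheMetric} via \Cref{eq:BasicInequality} and the inequality $\sqrt{\cosh G+1}\le\sqrt{\cosh G-1}+\sqrt2$. Your remark about the exponent $\varepsilon\sqrt t$ versus $\varepsilon\sqrt t/2$ is a fair piece of bookkeeping, but as you note it is absorbed into the constant $D$ and does not affect the argument.
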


\begin{lemma}\label{lemma:DiameterOfEpsilonNeighborhoodOfzero}
    Let $p\in X$ be a zero of $\varphi$.
    We take $\varepsilon>0$ so that no zero of $\varphi$ is contained in the $|\varphi|$-ball $B_{|\varphi|}(p,3\varepsilon)$ of radius $3\varepsilon$ centered at $p$.
    Then, for a sufficiently large $t$, the diameter of $B_{|\varphi|}(p,\varepsilon)$ in the $g_t$-metric is at most $M\varepsilon$, where $M$ is a constant depending only on the degree of the zero $p$.
\end{lemma}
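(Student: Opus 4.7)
My plan is to localize near $p$ using a coordinate $w$ in which $\varphi = w^k dw^2$, so the $|\varphi|$-metric is a flat cone of angle $(k+2)\pi$. I split $B = B_{|\varphi|}(p,\varepsilon)$ into the inner ball $B^\circ = B_{|\varphi|}(p,\delta_t)$ and the outer annulus $A = B \setminus B^\circ$, where $\delta_t = C/\sqrt{t}$ for a large constant $C$ to be chosen. On $A$, every point has $|\varphi|$-distance at least $\delta_t$ from $p$ and at least $2\varepsilon$ from any other zero of $\varphi$, so \Cref{Proposition:RoughBound} applied to an embedded $|\varphi|$-ball of radius $\delta_t/2$ around each such point yields a uniform bound $G(h_{t\varphi}) \leq G_\ast = G_\ast(C)$, independent of $t$. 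Using \Cref{eq:LocalRepresentationOftheMetric}, this gives $ds_{g_t} \leq \cosh(G_\ast/2)\,ds_{|\varphi|}$ on $A$, and an elementary cone computation bounds the $|\varphi|$-diameter of $A$ by $(2 + (k+2)\pi/2)\varepsilon$ (go radially out to $\partial B$, along at most half of the outer circle, and radially back). Hence $\diam_{g_t}(A) \leq M_1(k)\varepsilon$ for an explicit $M_1(k)$.

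The delicate step is to bound $\diam_{g_t}(B^\circ)$, since Minsky's estimate degenerates as we approach $p$. For this I would rescale by $\tilde w = t^{1/(k+2)}w$: the Hopf differential $t\varphi$ becomes $\tilde w^k d\tilde w^2$ (independent of $t$), and $B^\circ$ becomes the ball $\{|\tilde w|\leq R_0\}$ of radius $R_0 = (C(k+2)/2)^{2/(k+2)}$ also independent of $t$. The rescaled harmonic maps $\tilde h_t(\tilde w) = h_{t\varphi}(\tilde w/t^{1/(k+2)})$ have the fixed Hopf differential $\tilde w^k d\tilde w^2$; applying \Cref{Proposition:RoughBound} in the rescaled picture to balls of $|\tilde\varphi|$-radius $C/2$ around each point of the slightly larger annulus $\{R_0\leq|\tilde w|\leq 2R_0\}$ gives a uniform bound on $G(\tilde h_t)$ there, and via $e(\tilde h_t) = 2|\tilde\varphi|\cosh G(\tilde h_t)$ this yields a uniform pointwise bound on the energy density of $\tilde h_t$ on that annulus. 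Consequently $\tilde h_t$ is uniformly Lipschitz on $\{|\tilde w| = R_0\}$, so the hyperbolic diameter of $\tilde h_t(\{|\tilde w| = R_0\})$ is bounded by some $D = D(R_0,k)$ independent of $t$; the maximum principle for harmonic maps into nonpositively curved targets (subharmonicity of the hyperbolic distance to a fixed basepoint) then gives the same bound for $\tilde h_t(\{|\tilde w|\leq R_0\})$. Since the $g_t$-length of a curve equals $(4t)^{-1/2}$ times the hyperbolic length of its image in $C_{t\varphi}$, this gives $\diam_{g_t}(B^\circ) \leq D/(2\sqrt{t})$.

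Combining via the triangle inequality (any two points of $B$ are joined by a path through $\partial B^\circ \subset A \cap \overline{B^\circ}$), I obtain $\diam_{g_t}(B) \leq M_1(k)\varepsilon + D/(2\sqrt{t})$. For $t$ large enough that $D/(2\sqrt{t}) \leq \varepsilon$, this yields the desired $\diam_{g_t}(B) \leq M\varepsilon$ with $M = M_1(k) + 1$ depending only on the degree $k$ of the zero. I anticipate the main obstacle to be verifying the uniform energy and Lipschitz bounds on the rescaled maps $\tilde h_t$: one must check that the embeddedness and no-other-zero hypotheses of \Cref{Proposition:RoughBound} hold $t$-uniformly in the rescaled picture, and that the subharmonicity/maximum-principle argument remains valid across $\tilde w = 0$ (which is harmless since $\tilde h_t$ extends smoothly across this point as a harmonic diffeomorphism).
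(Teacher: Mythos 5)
Your argument is correct in outline, but it is a genuinely different route from the paper's. The paper does not decompose the ball into an annulus and a shrinking core: it takes the right-angled $|\varphi|$-staircase polygon $P$ tangent to $B_{|\varphi|}(p,\varepsilon)$, whose horizontal and vertical edges all stay at $|\varphi|$-distance $\varepsilon$ from the zeros, and applies \Cref{lemma:estimateForCompactArcs} edge by edge to get $\ell_{g_t}(\partial P)\leq 2\varepsilon\,\ord_\varphi(p)(1+2De^{-\varepsilon\sqrt t})$, which immediately yields a constant depending only on the degree. The trade-offs are as follows. The paper's proof is much shorter, but it is silent on how two points in the \emph{interior} of the ball (in particular near $p$, exactly where the $G$-estimates degenerate) are joined to $\partial P$ by controlled paths; your two-scale argument addresses precisely this point, by showing that the core of $|t\varphi|$-radius $O(1)$ has image of bounded hyperbolic diameter and hence $g_t$-diameter $O(t^{-1/2})$. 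On the other hand, your constant $M_1(k)=\cosh(G_\ast/2)\bigl(2+(k+2)\pi/2\bigr)$ inherits a dependence on $\operatorname{Area}(C_\varphi)$ through \Cref{Proposition:RoughBound} (on the inner rim of the annulus the exponential decay has not yet kicked in), so strictly speaking your $M$ depends on the topology of the surface and not only on the degree of $p$; this is harmless for the way the lemma is used (one lets $\varepsilon\to 0$ with $\varphi$ fixed), but it deviates from the stated dependence. Two small points you should make explicit to close the sketch: the subharmonicity/maximum-principle step must be run after lifting $\tilde h_t|_{\{|\tilde w|\leq R_0\}}$ to the universal cover of the target (distance to a point is not globally convex on a hyperbolic surface, but the domain disk is simply connected so the lift exists), and the passage from ``the image of the core has hyperbolic diameter $\leq D$'' to ``$\diam_{g_t}(B^\circ)\leq D/\sqrt{4t}$'' uses that $h_{t\varphi}$ is a diffeomorphism, so the preimage of a target geodesic is a competitor path of the same pullback length.
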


\begin{proof}
    Let $P$ be the $|\varphi|$-regular right angled polygonal region tangent to $B_{|\varphi|}(p,\varepsilon)$ (see \Cref{figure:RightAngledPolygon}).
    \begin{figure}
       \centering
       \begin{overpic}[scale=1.2]{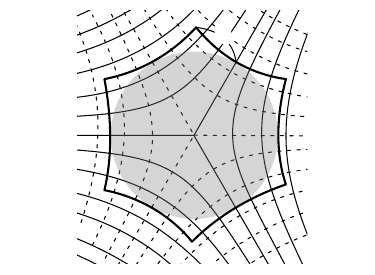}
           \put(58.5,62.5){$\varepsilon$}
       \end{overpic}
       \caption{The dotted lines denote the vertical lines of $\varphi$.
       The thick line denotes the boundary of $P$.}
       \label{figure:RightAngledPolygon}
    \end{figure}
    It is sufficient to show that the $g_t$-length of $\partial P$ is at most $M\varepsilon$.
    From the assumption, each horizontal edge $H$ or vertical edge $V$ of $\partial P$ is of distance $\varepsilon$ away from zeros of $\varphi$.
    Therefore, from \Cref{lemma:estimateForCompactArcs}, we find 
    \begin{equation*}
        \ell_{g_t}(H)\leq 2(1+De^{-\varepsilon\sqrt{t}})\varepsilon,\ \ \ \ell_{g_t}(V)\leq 2De^{-\varepsilon\sqrt{t}}\varepsilon.
    \end{equation*} 
    Therefore, the $g_t$-length of $\partial P$ is bounded by
    \begin{equation*}
        \ell_{g_t}(\partial P)\leq 2\varepsilon \ord_\varphi(p)(1+2De^{-\varepsilon\sqrt{t}}).
    \end{equation*}
    We thus get the conclusion.
\end{proof}

\subsection{Proof in the case of ideal polygons}

\begin{proof}[Proof of \Cref{theorem:IdealPolygonsCase}]
    We fix a sufficiently small $0<\varepsilon<C$ so that, for any two zeros $z_i,z_j$ of $\varphi$, the disks $B_{|\varphi|}(z_i,2\varepsilon)$ and $B_{|\varphi|}(z_j,2\varepsilon)$ have no intersection.
    We pick any two points $x_1,x_2 \in X$ and take any path $\Gamma_0$ from $x_1$ to $x_2$, and
    we deform $\Gamma_0$, in several steps, via homotopy relative to the endpoints.
    (Step 1) We straighten $\Gamma_0$ to $|\varphi|$-geodesic $\Gamma_1$.
    (Step 2) We deform $\Gamma_1$ to a $\varphi$-\textit{staircase curve} $\Gamma_2$ outside the $\varepsilon$-neighborhood of zeros of $\varphi$.
    Here, the $\varphi$-staircase curve is a piecewise horizontal or vertical arc of $\varphi$.
    This deformation keeps horizontal and vertical measures of the curve.
    (Step 3) We drag out the horizontal measure of $\Gamma_2$ in the $\varepsilon$-neighborhood of each zero which $\Gamma_2$ passes through.
    Since $\Gamma_2$ is passing through at most finitely many zeros of $\varphi$, the number $N$ of times dragging out is also finite.
    This deformation preserves the horizontal measure of $\Gamma_2$, although the vertical measure increases in general.
    However, the additional vertical measure is at most $4N\varepsilon$.
    Let $\Gamma_3$ be the resulting curve.
    (Step 4) We replace each vertical arc contained in $\varepsilon$-neighborhoods of zeros by the $g_t$-geodesic segment whose endpoints are the original endpoints connected by the vertical arc (see \Cref{fig:StepsOfTheDeformation2}).
    \begin{figure}
        \centering
        \begin{overpic}[scale=1.1]{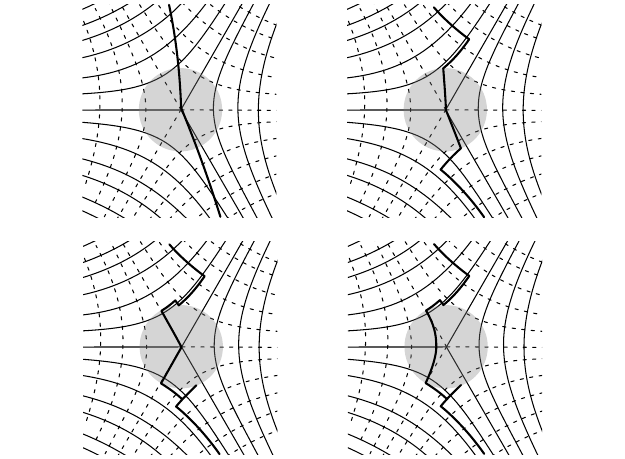}
        \end{overpic}
        \caption{This illustrates the steps of the deformation around zeros. The dotted lines are leaves of the vertical foliation, the solid lines are leaves of the horizontal foliation. The upper left side: Step 1, the upper right side: Step 2, the lower left side: Step 3, the lower right side: Step 4.}
        \label{fig:StepsOfTheDeformation2}
     \end{figure}
    \Cref{lemma:DiameterOfEpsilonNeighborhoodOfzero} implies the length of the geodesic in the $g_t$-metric is bounded by $M\varepsilon$.
    We denote the resulting curve by $\Gamma_{4,t}$.

    Now $\Gamma_{4,t}$ can be decomposed into horizontal line parts $\Gamma_{\mathrm{hori}}$, vertical line parts $\Gamma_{\mathrm{vert}}$ and $g_t$-geodesics part $\Gamma_t$.
    Since the complement $X\setminus U$ of the sink neighborhood $U$ is compact, the horizontal or vertical measure between any two points $x_3,x_4 \in X\setminus U$ can be uniformly bounded by $K$.
    For the horizontal part, we have
    \begin{align}
        \ell_{g_t}(\Gamma_{\mathrm{hori}}) &=\ell_{g_t}(\Gamma_{\mathrm{hori}}\cap (X\setminus U)) + \ell_{g_t}(\Gamma_{\mathrm{hori}}\cap U) \notag \\
        &\leq i(\mathcal{F}_{\varphi,\mathrm{vert}},\Gamma_{\mathrm{hori}}\cap (X\setminus U))(1+De^{-\varepsilon\sqrt{t}}) \notag & (\text{by \Cref{lemma:estimateForCompactArcs}})\\
        &\quad + i(\mathcal{F}_{\varphi,\mathrm{vert}},\Gamma_{\mathrm{hori}}\cap U)+De^{-C\sqrt{t}} & \text{(by \Cref{eq:HorizontalInfiniteSegment})}\notag \\
        &\leq i(\mathcal{F}_{\varphi,\mathrm{vert}},\Gamma_{\mathrm{hori}})+(K+1)De^{-\varepsilon\sqrt{t}} \notag \\
        &= i(\mathcal{F}_{\varphi,\mathrm{vert}},\Gamma_1) + (K+1)De^{-\varepsilon\sqrt{t}} \notag \\
        &= I_\varphi(x_1,x_2) + (K+1)De^{-\varepsilon\sqrt{t}} \label{eq:Horizontalestimate}. 
    \end{align}
    For the vertical part, we see
    \begin{align}
        \ell_{g_t}(\Gamma_{\mathrm{vert}})&=\ell_{g_t}(\Gamma_{\mathrm{vert}}\cap (X\setminus U)) + \ell_{g_t}(\Gamma_{\mathrm{vert}}\cap U) \notag \\
        &\leq De^{-\varepsilon\sqrt{t}} (K+4N\varepsilon) + De^{-C\sqrt{t}} \notag \\
        & (\text{by \Cref{lemma:estimateForCompactArcs} and \Cref{eq:VerticalInfiniteSegment}}) \notag \\
        &\leq (K+4N\varepsilon+1)De^{-\varepsilon\sqrt{t}}, 
        \label{eq:Verticalestimate}
    \end{align}
    where $N$ denotes the number of zeros through which the $|\varphi|$-geodesic between $x_1$ and $x_2$ passes.
    For the geodesic part, we find 
    \begin{equation}
        \ell_{g_t}(\Gamma_t)<NM\varepsilon. \label{eq:GeodesicsEstimate}
    \end{equation}
    Since the total number $N'$ of zeros of $\varphi$ bounds $N$ from above, by combining \Cref{eq:Horizontalestimate,eq:Verticalestimate,eq:GeodesicsEstimate} and \Cref{lemma:IntersectionNumbersBoundedFromDistance}, we have 
    \begin{equation*}
        \begin{split}
            I_\varphi(x_1,x_2)&<d_t(x_1,x_2) \\
            &<\ell_{g_t}(\Gamma_{4,t}) \\
            &<I_\varphi(x_1,x_2)+(2K+4N'\varepsilon+2)De^{-\varepsilon\sqrt{t}}+N'M\varepsilon.
        \end{split}
    \end{equation*}
    This implies, for every sufficiently large $t$, 
    \begin{equation*}
        |I_\varphi(x_1,x_2)-d_t(x_1,x_2)|<\varepsilon\ \ \ \ (\forall x_1,x_2 \in X).
    \end{equation*}
    Thus, we obtain the theorem.
\end{proof}

\section{Attaching the Cayley graph ends}
\label{section:HyperbolicPlaneWithTheEndsOfTheCayleyGraph}

In this section, to state the main theorem for a Riemann surface with a nontrivial fundamental group, we explain how to attach the ends of the Cayley graph to the universal covering of the Riemann surface.
Let $X$ be a closed Riemann surface of genus $g$ with $n>0$ punctures.
If $(g,n)=(0,1)$ or $(0,2)$, the Riemann surface $X$ admits a Euclidean structure; otherwise, it admits a hyperbolic structure.
In the following, we regard $X$ as the complete hyperbolic (or Euclidean) surface.
We fix the universal covering $\pi\colon \mathbb{H}\  (\text{or } \mathbb{E})\to X$.
Let $\mathcal{L}$ denote the set of bi-infinite geodesics on $\mathbb{H}$ (or $\mathbb{E}$) which are lifts of \textit{geodesic ideal arcs} on $X$, where a geodesic ideal arc is an infinite hyperbolic (or Euclidean) geodesic connecting (possibly the same) two punctures of $X$ (if $X$ is a Euclidean cylinder, we suppose that geodesic ideal arcs on $X$ are orthogonal to the core geodesic of $X$).

Let $\mathbf{e}=\{e_1,\ldots,e_{2g+n-1}\}$ be a family of geodesic ideal arcs on $X$ such that the complement of the union $\bigcup_i e_i$ is a simply connected domain.
Let $\Gamma< \operatorname{Isom}^+(\mathbb{H})$ (or $\operatorname{Isom}^+(\mathbb{E})$) be a discrete free subgroup with $X= \mathbb{H}/\Gamma$ (or $\mathbb{E}/\Gamma$).
We fix a base point $p\in X\setminus \bigcup_i e_i$.
For each edge $e_i$, there exists a unique $\gamma_i\in\pi_1(X,p)$ such that the algebraic intersection number of $\gamma_i$ and $e_j$ is $+1$ if $i=j$ or $0$ if $i\neq j$.
The loops $\gamma_1,\ldots,\gamma_{2g+n-1}$ generate the fundamental group $\pi_1(X,p)$.
Let $G_{\mathbf{e}}$ denote the Cayley graph of the group $\pi_1(X,p)$ with respect to the generating set $\{\gamma_1,\ldots,\gamma_{2g+n-1}\}$ and we equip the graph $G_{\mathbf{e}}$ with the word metric.
We define the word distance on $G_{\mathbf{e}}$.
The \textit{Gromov boundary} $\partial_\infty G_{\mathbf{e}}$ of $G_{\mathbf{e}}$ is the set of ends of the Cayley graph, i.e.
\begin{equation*}
    \partial_\infty G_{\mathbf{e}}=\{r\colon [0,\infty) \to G_{\mathbf{e}} \mid r \text{ is a geodesic ray in }G_{\mathbf{e}} \}/\sim,
\end{equation*}
where the equivalence relation $r_1\sim r_2$ is defined by that $d(r_1(t),r_2(t))$ is bounded from above for any $t\in[0,\infty)$.

The Gromov boundary $\partial_\infty G_{\mathbf{e}}$ usually has the \textit{cone topology} which is generated by the system $\{U(r,t,k)\}$ of subsets.
Here, for each ray $r\colon [0,\infty)\to G_{\mathbf{e}}$, and for $t,k>0$, we define
\begin{equation*}
    U(r,t,k)\coloneqq \{[r_1]\in\partial_\infty G_{\mathbf{e}} \mid r_1(0)=e, d(r_1(t),r(t)) < k\}.
\end{equation*}
It is known that the Gromov boundary $\partial_\infty G_{\mathbf{e}}$ is homeomorphic to a Cantor set (e.g. see \cite[p.266]{bridson1999metric}).

However we define the same topology in another way.
Let $\iota\colon G_{\mathbf{e}} \hookrightarrow \mathbb{H}$ (or $\mathbb{E}$) be a $\pi_1(X,p)$-equivariant embedding.
We define a set $\mathcal{P}$ of half-planes in $\mathbb{H}$ (or $\mathbb{E}$) by
\begin{equation*}
    \mathcal{P}=\{P\subset \mathbb{H}\ (\text{or } \mathbb{E}) \mid P=(\text{connected component of } \mathbb{H}\setminus \ell\  (\text{or } \mathbb{E}\setminus \ell)), \ell \in \mathcal{L}\}
\end{equation*}
Here we consider the set $\widehat{\mathbb{H}} = \mathbb{H}\sqcup \partial_\infty G_{\mathbf{e}}$ (or $\widehat{\mathbb{E}}\coloneqq  \mathbb{E}\sqcup \partial_\infty G_{\mathbf{e}}$), which is topologized in the following way: for $P\in \mathcal{P}$, we define the set $U_P\subset \widehat{\mathbb{H}}$ (or $\widehat{\mathbb{E}}$) as 
\begin{equation*}
    U_P = P \cup \{[r]\in\partial_\infty G_{\mathbf{e}} \mid \exists T>0 \text{ s.t. } \forall t >T, \iota\circ r(t) \in P\},
\end{equation*}
and let the union of $\{U_P\}_{P\in\mathcal{P}}$ and all open sets of $\mathbb{H}$ (or $\mathbb{E}$) be an open basis.



Next, we will show that the topology on $\widehat{\mathbb{H}}$ is independent of the choice of the family $\mathbf{e}$ of geodesic ideal arcs, the basepoint $p$, and the embedding $\iota$.
In a similar way, we can check the same fact holds for $\widehat{\mathbb{E}}$.
Let $\mathbf{e}_j=\{e_{1,j},\ldots,e_{2g+n-1,j}\}\ (j=1,2)$ be a family of geodesic ideal arcs such that the complement of the union $\bigcup_i e_{i,j}$ is a single simply connected domain.
We choose $p_j$ as the basepoint of the fundamental group of $X$.
Let $\iota_j\colon G_{\mathbf{e}_j} \hookrightarrow \mathbb{H}\ (\text{or } \mathbb{E})$ be a $\pi_1(X,p_j)$-equivariant embedding.
Let $\widehat{\mathbb{H}}_j$ denote the space $\mathbb{H} \sqcup \partial_\infty G_{\mathbf{e}_j}$ which has a topology determined from $\mathbf{e}_j, p_j,$ and $\iota_j$.
We construct a homeomorphism $\mathcal{I}\colon \widehat{\mathbb{H}}_1 \to \widehat{\mathbb{H}}_2$.
Let $c$ be a path from $p_1$ to $p_2$ and $\beta_c \colon \pi_1(X,p_1)\to \pi(X,p_2)$ be the change-of-basepoint map defined by $[\gamma]\mapsto[c\cdot \gamma\cdot c^{-1}]$.
Let $f\colon G_{\mathbf{e}_1} \to G_{\mathbf{e}_2}$ be the extension map of $\beta_c$.
Since $f$ is a quasi-isometry, it induces the homeomorphism $\partial f\colon \partial_\infty G_{\mathbf{e}_1}\to \partial_\infty G_{\mathbf{e}_2}$.
We define the map $\mathcal{I}\colon \widehat{\mathbb{H}}_1\to \widehat{\mathbb{H}}_2$ by
\begin{equation*}
    \mathcal{I}(x)=
    \begin{cases}
        x & (x\in \mathbb{H}) \\
        \partial f(x) & (x\in \partial_\infty G_{\mathbf{e}_1}).
    \end{cases}
\end{equation*}

\begin{proposition}\label{proposition:TheTopologyIsIndependentOfChoice}
    The map $\mathcal{I}\colon \widehat{\mathbb{H}}_1\to \widehat{\mathbb{H}}_2$ is a homeomorphism.
\end{proposition}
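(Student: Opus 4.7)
The plan is to verify separately that $\mathcal{I}$ is a bijection, that it is continuous, and that its inverse is continuous. Bijectivity is immediate: on $\mathbb{H}$ the map is the identity, and on $\partial_\infty G_{\mathbf{e}_1}$ it is $\partial f$, which is a homeomorphism because the change-of-basepoint map $\beta_c$ is an isomorphism of finitely generated groups and its extension $f$ to Cayley graphs is a quasi-isometry (translating a generator by a bounded word length in the other generating set). Continuity at points of $\mathbb{H}$ is equally clear: the set of half-planes $\mathcal{P}$ depends only on the hyperbolic (or Euclidean) metric on $X$ and on $\mathcal{L}$, not on the choice of $\mathbf{e}_j, p_j$, or $\iota_j$, so basic open sets of the form $U_P \cap \mathbb{H}=P$ agree in the two compactifications.

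The substantive step is continuity at boundary points $\xi \in \partial_\infty G_{\mathbf{e}_1}$. Let $\eta = \partial f(\xi)$ and let $U_P$ be a basic neighborhood of $\eta$ in $\widehat{\mathbb{H}}_2$, bounded by a geodesic $\ell \in \mathcal{L}$. The key geometric input is a uniform bound: there exists $K=K(\mathbf{e}_1,\mathbf{e}_2,p_1,p_2,c,\iota_1,\iota_2)$ such that
\begin{equation*}
    \dist_{\mathbb{H}}\bigl(\iota_1(\gamma),\,\iota_2(f(\gamma))\bigr) \leq K \qquad \text{for all } \gamma \in \pi_1(X,p_1).
\end{equation*}
This follows because $f(\gamma)=[c]\gamma[c]^{-1}$ in the appropriate sense, and the $\Gamma$-equivariance of the two embeddings $\iota_j$ reduces the comparison to a single isometry of $\mathbb{H}$ applied to the $\Gamma$-orbit, with the remaining discrepancy controlled by the Lipschitz behavior of $\iota_j$ on edges of the Cayley graph. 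Since geodesic rays in $G_{\mathbf{e}_1}$ map under $f$ to $(\lambda,c)$-quasi-geodesics in $G_{\mathbf{e}_2}$ at uniform Hausdorff distance from genuine rays representing $\partial f(\xi)$, the estimate extends to imply $\dist_{\mathbb{H}}(\iota_1\circ r(t),\,\iota_2\circ r'(t)) \leq K'$ whenever $r$ represents $\xi$ and $r'$ represents $\eta$, for some enlarged constant $K'$ and all sufficiently large $t$.

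With this estimate in hand, choose a second geodesic $\ell' \in \mathcal{L}$ cobounding with $\ell$ a strip in $\mathbb{H}$ of width greater than $K'$, and let $P'\subsetneq P$ be the corresponding smaller half-plane. Then any point of $P'\subset \mathbb{H}$ lies in $P\subset U_P$, while any ray $r$ with $\iota_1\circ r(t)\in P'$ for large $t$ satisfies $\iota_2\circ r'(t)\in P$ for large $t$, so $\partial f([r])\in U_P$. Hence $\mathcal{I}(U_{P'})\subset U_P$. Verifying that $\xi$ itself lies in $U_{P'}$ for suitable choices of $P'$ shrinking toward $\xi$ (which again uses that $\iota_1\circ r$ represents $\xi$ and eventually enters any sufficiently deep half-plane containing a representative of $\eta$) shows that $\{U_{P'}\}$ forms a neighborhood basis at $\xi$ mapping into $U_P$. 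The same argument applied with the roles of $\widehat{\mathbb{H}}_1$ and $\widehat{\mathbb{H}}_2$ swapped handles continuity of $\mathcal{I}^{-1}$, and $\mathcal{I}$ is thus a homeomorphism.

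The main obstacle I anticipate is establishing the uniform bound on $\dist_{\mathbb{H}}(\iota_1(\gamma),\iota_2(f(\gamma)))$, which requires carefully tracking how the two $\Gamma$-equivariant embeddings relate under the change-of-basepoint conjugation. Once this bound is in hand, the rest of the argument is a routine comparison of neighborhood bases defined by half-planes.
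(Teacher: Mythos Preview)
Your approach is essentially the same as the paper's: reduce to continuity at boundary points and establish the uniform bound $d_{\mathbb{H}}(\iota_1\circ r_1(t),\iota_2\circ f\circ r_1(t))\leq K$. The difference is in emphasis. The paper devotes most of its proof to computing this bound explicitly---it introduces an auxiliary embedding $\tau_2=\iota_1\circ\beta_c^{-1}$, observes that both $\tau_2(G_{\mathbf{e}_2})$ and $\iota_2(G_{\mathbf{e}_2})$ project to bouquets in $X$ so that $d_{\mathbb{H}}(\tau_2(x),\iota_2(x))<C$ uniformly, and then combines this with the Lipschitz constants of $\iota_1,\tau_2,f$ to get $K$---whereas you only sketch this step. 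Conversely, you add the shrinking-half-plane argument ($P'\subsetneq P$) to pass from the bound to continuity; the paper simply asserts that the bound implies $\iota_1\circ r_1(t)\in P$ for large $t$ and stops there. One caution: your phrase ``strip of width greater than $K'$'' is imprecise in $\mathbb{H}$---what you actually need is $d_{\mathbb{H}}(P',\ell)>K'$, which forces $\ell'$ to be ultraparallel to $\ell$, and you should check that such $\ell'\in\mathcal{L}$ with $\xi\in U_{P'}$ can always be found (this is not automatic when the embedded ray limits to a parabolic endpoint of $\ell$).
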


\begin{proof}
    It is sufficient to show the continuity of $\mathcal{I}$ at each point $x\in\partial_\infty G_{\mathbf{e}_1}$.
    Let $r_1\colon [0,\infty)\to G_{\mathbf{e}_1}$ be a representative of $x$.
    We need to show that if $U_P$ is a neighborhood of $[f\circ r_1]$, then it is also a neighborhood of $[r_1]$.
    This assumption means that there exists $T>0$ such that, for any $t>T$, $\iota_2\circ f\circ r_1(t) \in U_P$.
    We set $\widetilde{p}_1=\iota_1(e)\in\pi^{-1}(p_1)$.
    The embedding $\iota_1$ maps vertices of $G_{\mathbf{e}_1}$ to the orbit $\Gamma\cdot \widetilde{p}_1$.
    We define another embedding $\tau_2$ of $G_{\mathbf{e}_2}$ by $\tau_2=\iota_1\circ\beta_c^{-1}$.
    In other words, the graph $\tau_2(G_{\mathbf{e}_2})$ is obtained by changing the edges of the graph $\iota_1(G_{\mathbf{e}_1})$.
    Let $\mathcal{G}_1, \mathcal{G}_2$ denote the sets of generators determined from edge systems $\mathbf{e}_1, \mathbf{e}_2$, respectively.
    Then, the lipschitz constants of $\iota_1,\tau_2$ are given by
    \begin{align*}
        \operatorname{Lip}(\iota_1) &=\max_{\gamma\in\mathcal{G}_1} d_{\mathbb{H}} (\widetilde{p}_1,\gamma\widetilde{p}_1) \\
        \operatorname{Lip}(\tau_2) &=\max_{\gamma\in\mathcal{G}_2} d_{\mathbb{H}} (\widetilde{p}_1,\gamma\widetilde{p}_1).
    \end{align*}
    Since the images $\tau_2(G_{\mathbf{e}_2}), \iota_2(G_{\mathbf{e}_2})$ of Cayley graphs embedded into $\mathbb{H}$ are mapped to a $(2g+n-1)$-bouquet embedded into $X$ by $\pi$, there exists a constant $C>0$ such that $d_{\mathbb{H}}(\tau_2(x),\iota_2(x))<C$ for any point $x\in G_{\mathbf{e}_2}$.
    Since $f\colon G_{\mathbf{e}_1}\to G_{\mathbf{e}_2}$ is an extension of $\beta_c$, for each vertex $\gamma\in \pi_1(X,p_1) \subset G_{\mathbf{e}_1}$, we see $f(\gamma)=\beta_c(\gamma)\in \pi_1(X,p_2)$.
    Here, we prove that there exists a $K>0$ such that 
    \begin{equation}
        d_{\mathbb{H}}(\iota_2\circ f\circ r_1(t),\iota_1\circ r_1(t))< K\ \ \ \ (\forall t\geq 0).
    \end{equation}
    First, for $t=i\in\Z_{\geq 0}$, the point $\gamma_i\coloneqq r_1(i)$ is a vertex of the Cayley graph $G_{\mathbf{e}_1}$.
    Therefore, we have
    \begin{align*}
        d_{\mathbb{H}}(\iota_2\circ f\circ r_1(i),\iota_1\circ r_1(i)) &= d_{\mathbb{H}}(\iota_2\circ \beta_c(\gamma_i), \tau_2\circ \beta_c(\gamma_i))<C.
    \end{align*}
    In the case of $t\in(i,i+1)\ (i\in\Z_{\geq 0})$, we have 
    \begin{align*}
         &d_{\mathbb{H}}(\iota_2\circ f\circ r_1(t),\iota_1\circ r_1(t)) \\
        <&d_{\mathbb{H}}(\iota_2\circ f\circ r_1(t),\tau_2\circ f\circ r_1(t))+d_{\mathbb{H}}(\tau_2\circ f\circ r_1(t), \iota_1\circ r_1(t)) \\
        <&C+d_{\mathbb{H}}(\tau_2\circ f\circ r_1(t), \iota_1\circ r_1(t)) \\
        <&C+d_{\mathbb{H}}(\tau_2\circ f\circ r_1(t), \tau_2\circ f\circ r_1(i)) + d_{\mathbb{H}}(\tau_2\circ f\circ r_1(i), \iota_1\circ r_1(t))\\
        =&C+d_{\mathbb{H}}(\tau_2\circ f\circ r_1(t), \tau_2\circ f\circ r_1(i)) + d_{\mathbb{H}}(\iota_1\circ r_1(i), \iota_1\circ r_1(t)) \\
        <&C+(\operatorname{Lip}(\tau_2)\operatorname{Lip}(f)+\operatorname{Lip}(\iota_1))|t-i| \\
        <&C+\operatorname{Lip}(\tau_2)\operatorname{Lip}(f)+\operatorname{Lip}(\iota_1)\eqqcolon K.
    \end{align*}
    This implies $\iota_1\circ r_1(t)\in U_P$ for sufficiently large $t>0$.
    Therefore, we obtain the conclusion.
\end{proof}

From \Cref{proposition:TheTopologyIsIndependentOfChoice}, it follows that the space $\widehat{\mathbb{H}}$ (or $\widehat{\mathbb{E}}$) is independent of the choice of the family $\mathbf{e}$ of ideal arcs, the basepoint $p$ and the embedding $\iota$.
Thus, we can write $\widehat{\mathbb{H}}=\mathbb{H}\cup \partial_\infty \Gamma$ (or $\widehat{\mathbb{E}}=\mathbb{E}\cup \partial_\infty\Gamma$).

\begin{proposition}
    Let $F\subset \mathbb{H}$ be a fundamental domain of $\Gamma$ which is a hyperbolic ideal polygon.
    Then, the complement of a neighborhood of $\partial_\infty \Gamma \subset \widehat{\mathbb{H}}$ can be covered by the image of $F$ by finite elements of $\Gamma$.
\end{proposition}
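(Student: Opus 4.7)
The plan is to combine the compactness of $\partial_\infty \Gamma$ with the local finiteness of the $\Gamma$-tiling of $\mathbb{H}$ by translates of $F$. Since $\partial_\infty \Gamma$ is homeomorphic to a Cantor set and the sets $\{U_P\}_{P\in \mathcal{P}}$ form a basis of neighborhoods at each point of $\partial_\infty\Gamma$, I would first choose finitely many half-planes $P_1,\ldots,P_k\in \mathcal{P}$ with $\partial_\infty\Gamma\subset \bigcup_{j=1}^k U_{P_j}\subset U$. Setting $K \coloneqq \widehat{\mathbb{H}}\setminus \bigcup_j U_{P_j}$, this is a closed subset of $\widehat{\mathbb{H}}$ disjoint from $\partial_\infty\Gamma$, hence $K\subset \mathbb{H}$, and it coincides with the convex region $\bigcap_j (\mathbb{H}\setminus P_j)$ cut out by finitely many geodesics in $\mathcal{L}$.

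The key step is to show that $K$ has compact closure inside $\mathbb{H}$. Suppose otherwise: there is a sequence $x_n \in K$ that leaves every compact subset of $\mathbb{H}$ and, after passing to a subsequence, converges to some $\xi\in \partial \mathbb{H}$ in the usual compactification $\overline{\mathbb{H}}$. Pick $\gamma_n\in \Gamma$ with $x_n\in \gamma_n F$. Up to a subsequence, either $\gamma_n$ diverges to an end $e\in \partial_\infty \Gamma$ of the Cayley graph $G_{\mathbf{e}}$, or $\gamma_n$ stays in a finite set and $x_n$ escapes toward an ideal vertex $\gamma\cdot p^\infty$ of some translate $\gamma F$; in the latter case the parabolic element $\gamma\sigma\gamma^{-1}$ stabilizing $\gamma\cdot p^\infty$ determines an end $e\in \partial_\infty \Gamma$ via the ray $(\gamma\sigma^n\gamma^{-1})_n$ in $G_{\mathbf{e}}$. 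In both cases, I claim $x_n\to e$ in $\widehat{\mathbb{H}}$: for every basic neighborhood $U_P$ of $e$, the geodesic ray representing $e$ in $G_{\mathbf{e}}$ eventually lies in $P$ under $\iota$, so its image accumulates at a point of $\partial\mathbb{H}$ lying in the closure of the arc of $\partial P$ at infinity, and a direct comparison using the half-plane structure forces $x_n\in P$ eventually. Since $K$ is closed in $\widehat{\mathbb{H}}$, the limit $e$ would lie in $K\cap \partial_\infty\Gamma=\emptyset$, a contradiction. Hence $\overline{K}\subset \mathbb{H}$ is compact.

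Finally, the local finiteness of the tiling $\{\gamma F\}_{\gamma\in \Gamma}$ in the hyperbolic topology implies that only finitely many translates $\gamma F$ meet $\overline K$, and these cover $\widehat{\mathbb{H}}\setminus U\subset K$. The main obstacle is the convergence claim in the preceding step: one must bridge the hyperbolic geometry of $\mathbb{H}$ with the combinatorial topology of $G_{\mathbf{e}}$ used to define the compactification, and in particular verify that cusp excursions---which accumulate at parabolic fixed points of $\partial \mathbb{H}$---are recognized as convergent in $\widehat{\mathbb{H}}$ via the end of $G_{\mathbf{e}}$ represented by the stabilizing parabolic, and that the $\mathbb{H}$-distance from $x_n$ to the relevant embedded ray remains controlled enough to transport the convergence from the Cayley graph back to $\mathbb{H}$.
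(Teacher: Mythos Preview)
Your first step, using compactness of $\partial_\infty\Gamma$ to extract finitely many $P_1,\ldots,P_k\in\mathcal{P}$ with $\partial_\infty\Gamma\subset\bigcup_j U_{P_j}\subset U$, is exactly what the paper does. The gap is in the next step: the assertion that $K=\bigcap_j(\mathbb{H}\setminus P_j)$ has compact closure in $\mathbb{H}$ is false in general. For a concrete counterexample, let $P_1,\ldots,P_k$ be the half-planes bounded by the sides of $F$ and lying on the side opposite $F$; these lie in $\mathcal{P}$ since the sides of $F$ project to geodesic ideal arcs. Every geodesic ray in the Cayley tree leaves the base vertex through one of these sides and never returns, so $\bigcup_j U_{P_j}\supset\partial_\infty\Gamma$. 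Yet $K=\overline{F}$, an ideal polygon, which is not precompact in $\mathbb{H}$. The proposition holds here trivially (one translate covers $K$), but not through your compactness route.

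The error is precisely in case~(b) of your contradiction argument, the point you yourself flag as the obstacle. A sequence $x_n\in\gamma F$ escaping toward an ideal vertex $\xi$ need not converge in $\widehat{\mathbb{H}}$ at all. The parabolic $\sigma$ fixing $\xi$ gives two ends $e_\pm=[\sigma^{\pm n}]$, and a basic neighborhood $U_P$ of $e_+$ can be taken with $\partial P$ landing at $\xi$ and $P$ on the side into which $\iota(\sigma^n)$ drifts. Points going straight into the cusp of $\gamma F$ at $\xi$ stay in the complementary sector and never enter such a $P$, so $x_n\not\to e_+$, and symmetrically $x_n\not\to e_-$. Cusp excursions are thus invisible to the topology of $\widehat{\mathbb{H}}$: the set $K$ can be closed in $\widehat{\mathbb{H}}$, disjoint from $\partial_\infty\Gamma$, and still fail to be precompact in $\mathbb{H}$.

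The paper bypasses this by going directly from the finite cover $\{U_{P_j}\}$ to the conclusion. One way to justify its ``clearly'': since the half-planes bounded by edges of the tiling $\{\gamma F\}$ already form a neighborhood basis at each end, one may take the $P_j$ of this special form; then the tiles in $K$ correspond to the vertices of a subtree of the dual Cayley tree which, because every end lies in some $U_{P_j}$, contains no infinite ray and is therefore finite by local finiteness.
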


\begin{proof}
    Let $U\subset \widehat{\mathbb{H}}$ be a neighborhood of $\partial_\infty \Gamma$.
    It is known that $\partial_\infty \Gamma$ is homeomorphic to the Cantor set, in particular, it is compact.
    Therefore, there exist finite half-planes $P_1,\ldots,P_n$ such that $ \partial_\infty \Gamma\subset U_{P_1} \cup \cdots\cup U_{P_n} \subset U$.
    Clearly the complement of $U_{P_1} \cup \cdots\cup U_{P_n}$ can be covered by the image of $F$ by finite elements of $\Gamma$.
\end{proof}

\section{Uniform convergence of distance functions: general case}

In this section, we generalize \Cref{theorem:IdealPolygonsCase} to the case that the underlying surface $S$ is not simply connected.
We take $n$ open disks $D_1,\ldots,D_n$ in an oriented, closed surface of genus $g$ such that the closures $\overline{D_1},\ldots,\overline{D_n}$ are pairwise disjoint.
We remove $D_1,\ldots,D_n$ and distinct $m_i\geq 0$ points on the boundary of $D_i$ from the surface.
Let $S$ be the resulting surface.

Let $X$ be a closed Riemann surface of genus $g$ with $n$ punctures $p_1,\ldots,p_n$.
Let $\varphi$ be a holomorphic quadratic differential on $X$ which has a pole of order $m_i+2$ at each puncture $p_i$.
Let $g_t$ denote the rescaled metric $(4t)^{-1}g_{t\varphi}$ on $\widetilde{X}$ as in \Cref{subsection:StatementIdealPolygonsCase} and $d_t$ denote the distance function determined from $g_t$ on $\widetilde{X}$.
Let $\pr_\varphi\colon \widetilde{X} \to T(\varphi)$ be the collapsing map of the dual $\R$-tree to the vertical measured foliation of $\widetilde{\varphi}$.
We set $I_\varphi = d_{T(\varphi)}\circ \operatorname{Pr}_\varphi$, where $\operatorname{Pr}_\varphi(x_1,x_2)=(\pr_\varphi x_1,\pr_\varphi x_2)$ for $x_1,x_2\in \widetilde{X}$.

\begin{theorem}\label{theorem:GeneralCases}
    Let $U\subset \widehat{\mathbb{H}}$ (or $\widehat{\mathbb{E}}$) be a neighborhood of $\partial_\infty \Gamma \subset \widehat{\mathbb{H}}$ (or $\widehat{\mathbb{E}}$).
    Then the metric functions $d_t$ uniformly converge to $I_\varphi$ on $\widetilde{X}\setminus U$.
\end{theorem}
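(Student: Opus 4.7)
The plan is to follow the proof of \Cref{theorem:IdealPolygonsCase} as closely as possible, using the last proposition of \Cref{section:HyperbolicPlaneWithTheEndsOfTheCayleyGraph} to upgrade the local estimates to a uniform bound on $\widetilde{X}\setminus U$. The inequality $I_\varphi(x_1,x_2)\leq d_t(x_1,x_2)$ on all of $\widetilde{X}\times\widetilde{X}$ follows from \Cref{lemma:IntersectionNumbersBoundedFromDistance} without change, since its proof is purely local and rests on the representation \Cref{eq:LocalRepresentationOftheMetric} of $g_t$ in natural coordinates of $\widetilde{\varphi}$. Thus the task reduces to showing that, for every $\varepsilon>0$, there is $t_0>0$ such that $d_t(x_1,x_2) < I_\varphi(x_1,x_2)+\varepsilon$ for all $t>t_0$ and all $x_1,x_2\in\widetilde{X}\setminus U$.

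By the last proposition of \Cref{section:HyperbolicPlaneWithTheEndsOfTheCayleyGraph}, pick $\gamma_1,\ldots,\gamma_N\in\Gamma$ with $\widetilde{X}\setminus U\subset \bigcup_{k=1}^N \gamma_k F$. Fix a sink neighborhood $U_i$ around each puncture $p_i$ of $X$ as in \Cref{lemma:estimateForInfintiteLines}. Only finitely many lifts of these sink neighborhoods meet $\bigcup_k \gamma_k F$; let $\widetilde{V}$ denote their union. The residual set
\begin{equation*}
\widetilde{K}:=\Bigl(\bigcup_{k=1}^N \gamma_k F\Bigr)\setminus \widetilde{V}
\end{equation*}
is relatively compact in $\widetilde{X}$, so the horizontal and vertical $\widetilde{\varphi}$-measures of any arc in $\widetilde{K}$ are bounded by some constant depending only on $\varphi$ and $\{\gamma_k\}$. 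This plays the role of the constant $K$ in the proof of \Cref{theorem:IdealPolygonsCase}, with the single sink neighborhood there replaced by the finite collection of lifts forming $\widetilde{V}$.

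Given $x_1,x_2\in\widetilde{X}\setminus U$, take a $|\widetilde{\varphi}|$-geodesic $\Gamma_1$ between them and run the four-step deformation of \Cref{theorem:IdealPolygonsCase} verbatim to produce $\Gamma_{4,t}$: straighten to $\Gamma_1$, convert to a $\widetilde{\varphi}$-staircase $\Gamma_2$ off of $\varepsilon$-neighborhoods of $\widetilde{\varphi}^{-1}(0)$, drag horizontal measure across each zero to obtain $\Gamma_3$, then replace the short verticals near zeros by $g_t$-geodesics to obtain $\Gamma_{4,t}$. Decomposing $\Gamma_{4,t}$ into horizontal arcs, vertical arcs, and short $g_t$-geodesic detours, the pieces in $\widetilde{V}$ are estimated by \Cref{lemma:estimateForInfintiteLines}, the pieces in $\widetilde{K}$ by \Cref{lemma:estimateForCompactArcs} using the uniform $\widetilde{\varphi}$-measure bound, and the detours by \Cref{lemma:DiameterOfEpsilonNeighborhoodOfzero}. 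Summing exactly as in the ideal polygon case yields
\begin{equation*}
d_t(x_1,x_2) \leq \ell_{g_t}(\Gamma_{4,t}) \leq I_\varphi(x_1,x_2) + A e^{-c\sqrt{t}} + B\varepsilon,
\end{equation*}
with constants $A,B,c>0$ depending only on $\varphi$, $U$, and $\varepsilon$. Choosing $\varepsilon$ small first and then $t$ large delivers the required uniform estimate.

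The principal new difficulty, absent from the simply connected case, is ensuring that the $|\widetilde{\varphi}|$-geodesic $\Gamma_1$ can be chosen to lie in a uniformly controlled region: it must cross only a uniformly bounded number of zeros of $\widetilde{\varphi}$, and it must avoid all sink neighborhoods except the finitely many lifts comprising $\widetilde{V}$. I plan to handle this by slightly enlarging $\bigcup_k \gamma_k F$ to a finite union of $\Gamma$-translates of $F$ that contains the $|\widetilde{\varphi}|$-geodesic between any two of its points, so that the estimates above go through with constants depending only on the enlarged finite region. Establishing this convexity-type enlargement — essentially a visibility statement for $(\widetilde{X},|\widetilde{\varphi}|)$ relative to the sink neighborhood structure — is the main technical step; everything downstream then reduces to the argument of \Cref{theorem:IdealPolygonsCase}.
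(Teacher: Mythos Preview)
Your overall strategy matches the paper's, but the step you flag as the ``principal new difficulty'' is exactly where the paper does something you have not. Rather than trying to enlarge a finite union of hyperbolic translates $\gamma_kF$ into a $|\widetilde{\varphi}|$-convex region (which there is no reason to expect is possible, since the sides of $F$ are hyperbolic geodesics, not $|\widetilde{\varphi}|$-geodesics), the paper replaces $F$ by a fundamental domain $F_\varphi$ whose edges are \emph{straight} $|\varphi|$-geodesic ideal arcs homotopic rel endpoints to the edges of $F$. A separate proposition shows that any region covered by finitely many $\Gamma$-translates of $F$ is also covered by finitely many translates of $F_\varphi$. Each copy of $F_\varphi$ is then $|\widetilde{\varphi}|$-convex by construction, so for $x_1,x_2$ in a single copy the geodesic $\Gamma_1$ stays inside it, passes through only the finitely many zeros of $\widetilde{\varphi}$ lying in that copy, and meets only the lifts of sink neighborhoods attached to that copy. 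The finite-union case follows because $\Gamma$ is free and the domains $\gamma F_\varphi$ are separated by $|\widetilde{\varphi}|$-geodesic arcs. Your proposed ``visibility statement'' is therefore true, but the mechanism is to change the fundamental domain, not to enlarge the set of translates.

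There is a second, smaller gap: your appeal to \Cref{lemma:estimateForInfintiteLines} for the infinite horizontal and vertical lines only covers poles of order at least $3$, where the sink neighborhood decomposes into half-planes and strips and one has $f\circ\Gamma_1(s)\geq C+s$. When $p_i$ is a double pole, the neighborhood $U_i\setminus\{p_i\}$ is a flat half-infinite cylinder, the vertical and horizontal leaves spiral at a fixed angle $\theta$ to $\partial U_i$, and the correct lower bound is $f\circ V(s)\geq C+s\sin\theta$. The paper proves a separate lemma for this case; the exponential-decay estimate still goes through, but the constant now depends on $\theta$.
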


In order to prove this, we prepare the proposition on the fundamental domain.
Let $\Gamma$ be a discrete free subgroup of $\operatorname{Isom}^+(\widetilde{X})$ such that $\widetilde{X}/\Gamma=X$.
Let $\mathbf{e}=\{e_1,\ldots,e_{2g+n-1}\}$ be a family of geodesic ideal arcs on $X$ such that $X\setminus \bigcup_i e_i$ is a single simply connected domain.
Let $F$ be a fundamental domain bounded by lifts of $e_1,\ldots,e_{2g+n-1}$ to $\widetilde{X}$.
Taking \textit{straight} $|\varphi|$-geodesic ideal arcs homotopic (rel.\ endpoints) to the geodesic ideal arcs $e_1,\ldots,e_{2g+n-1}$, we obtain the $|\varphi|$-convex fundamental domain $F_\varphi$ of $\Gamma$.
Here, a $|\varphi|$-geodesic ideal arc is said to be straight, if it does not spiral towards any puncture at the ends of the ideal arc.
Around a double pole of $\varphi$, there may exist $|\varphi|$-geodesic ideal arcs that spiral towards the pole.
\begin{proposition}\label{proposition:FiniteCopies}
    If a domain is covered by the image of $F$ by finite elements of $\Gamma$, then it is also covered by the image of $F_\varphi$ by finite elements of $\Gamma$.
\end{proposition}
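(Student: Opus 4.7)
First I reduce the proposition to the single--fundamental--domain statement: if $F \subset \bigcup_{k=1}^{M} h_k F_\varphi$ for some $h_1,\dots,h_M\in\Gamma$, then any finite union $\bigcup_{j=1}^{N} \gamma_j F$ is contained in the finite collection $\{\gamma_j h_k F_\varphi\}_{j,k}$ of $\Gamma$-translates of $F_\varphi$. So it suffices to show that $F$ itself is covered by finitely many $\Gamma$-translates of $F_\varphi$.

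Next I decompose the symmetric difference $F \triangle F_\varphi$. The ideal polygon $F$ is bounded by finitely many lifts $\tilde{e}_1,\dots,\tilde{e}_{2(2g+n-1)}$ of the hyperbolic geodesic ideal arcs $e_i$, while $F_\varphi$ is bounded by the corresponding straight $|\varphi|$-geodesic lifts $\tilde{e}_1^\varphi,\dots,\tilde{e}_{2(2g+n-1)}^\varphi$. Each $\tilde{e}_i^\varphi$ is homotopic rel endpoints to $\tilde{e}_i$ and shares its two ideal endpoints, which are parabolic fixed points of $\Gamma$ on $\partial_\infty \widetilde{X}$. Hence each pair $(\tilde{e}_i,\tilde{e}_i^\varphi)$ cobounds an ``ideal bigon'' $B_i \subset \widetilde{X}$, and the closure of $F \triangle F_\varphi$ is contained in the finite union $\bigcup_i B_i$. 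It remains to cover each $B_i$ by finitely many $\Gamma$-translates of $F_\varphi$.

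To cover $B_i$, fix sufficiently small horoballs $H_p$ and $H_q$ around its two ideal endpoints $p,q$. The complement $B_i \setminus (H_p \cup H_q)$ is a compact subset of $\widetilde{X}$, and therefore covered by finitely many $\Gamma$-translates of $F_\varphi$ by a standard compactness argument. For the cusp part $B_i \cap H_p$, pass to the upper half--plane model placing $p = \infty$, so the parabolic stabilizer is generated by a translation $z\mapsto z+c$ and $F_\varphi\cap \{\Im z > T\}$ (for $T$ large) is a vertical strip of horizontal width $c$, whose $\Gamma$-translates tile the horoball by horizontal shifts. The hyperbolic edge $\tilde{e}_i$ is a vertical ray, and the straight $|\varphi|$-geodesic $\tilde{e}_i^\varphi$, by virtue of \emph{not} spiralling at the puncture, lifts to a curve whose horizontal coordinate stays bounded above height $T$ (its projection lies in a single half--plane domain of $\varphi$ for poles of order $\geq 3$, and approaches the puncture monotonically through a cylindrical end for a double pole). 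Consequently $B_i\cap H_p$ has bounded horizontal extent and meets only finitely many horizontal translates of $F_\varphi$; the same argument handles $H_q$, and the Euclidean case is entirely analogous.

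The main obstacle is establishing the bounded--horizontal--displacement statement for lifts of straight $|\varphi|$-geodesics in horoballs, which is precisely where the non--spiralling hypothesis on $\tilde{e}_i^\varphi$ is essential: a spiralling $|\varphi|$-geodesic would yield a bigon of unbounded horizontal extent in the upper half--plane model, intersecting infinitely many translates of $F_\varphi$ and breaking finiteness.
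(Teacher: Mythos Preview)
Your argument is correct and reaches the same conclusion, but the route differs from the paper's. You proceed constructively: decompose $F\setminus F_\varphi$ into ideal bigons between paired edges, cover the compact core of each bigon by compactness, and handle the cusp ends in the upper half--plane model by showing that a non--spiralling $|\varphi|$-geodesic has bounded horizontal displacement in the horoball, so only finitely many parabolic translates of $F_\varphi$ are needed. The paper instead argues by contradiction: if $F$ met infinitely many $\Gamma$-translates of $F_\varphi$, then some edge $e$ of $F$ would meet infinitely many $\Gamma$-translates $g_i\cdot e'$ of a single edge $e'$ of $F_\varphi$; this is impossible because $e'$ has finite $|\varphi|$-length outside small coordinate disks around the punctures, and near a shared ideal endpoint both $e$ and $e'$ have convergent tangent directions (this is precisely where non--spiralling is invoked), hence are disjoint in a small enough punctured disk. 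Your approach is more hands--on and makes the role of the parabolic stabiliser explicit; the paper's is shorter and sidesteps the bigon decomposition entirely. One small imprecision worth tightening: $F_\varphi\cap\{\Im z>T\}$ is not literally a vertical strip of width $c$, since its sides are $|\varphi|$-geodesics rather than vertical hyperbolic geodesics; what you actually need---and what the bounded--horizontal--displacement claim gives---is that it is \emph{contained} in a vertical strip of bounded width, and that already suffices for the finiteness.
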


\begin{proof}[Proof of \Cref{proposition:FiniteCopies}]
    It is sufficient to show that a single fundamental domain $F$ can be covered by the image of $F_\varphi$ by finite elements of $\Gamma$. 
    We suppose that it is not true.
    Then, there exist an edge $e$ of $F$, an edge $e'$ of $F_\varphi$ and infinitely many isometries  $\{g_i\} \subset \Gamma$ such that $(g_i\cdot e' )\cap e \neq \varnothing$.
    Let $(U_i,z_i)$ be a small coordinate disk around each puncture $p_i$. 
    First, we consider the case that $e$ and $e'$ have no common endpoints.
    Since the $|\varphi|$-length of $e'\cap (X\setminus \bigcup_i U_i)$ is finite, it contradicts the assumption. 
    Next, we suppose that $e$ and $e'$ have a common endpoint $p_i$.
    Let $\gamma\colon [0,\infty)\to e$ be a smooth curve such that $\gamma(t)$ tends to $p_i$ as $t\to \infty$.
    Then, in the coordinate disk $(U_i,z_i)$, the argument of the tangent vector $\gamma'(t)$ converges as $t\to\infty$.
    It also holds for $e'$ (note that $e'$ is a non-spiraling ideal arc).
    Therefore, in a sufficiently small neighborhood of the puncture, $e$ and $e'$ have no intersection.
    For the same reason as the case of having no common endpoints, this contradicts the assumption.
\end{proof}

From \Cref{proposition:FiniteCopies}, it is sufficient to show the following.

\begin{proposition}
    The distance functions $d_t$ uniformly converge to $I_\varphi$ on a domain covered by the image of $F_\varphi$ by finite elements of $\Gamma$.
\end{proposition}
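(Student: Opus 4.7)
The plan is to adapt the proof of Theorem 3.1 (the ideal polygon case) to the universal cover $\widetilde{X}$ equipped with the lifted differential $\widetilde{\varphi}$, using the $\Gamma$-invariance of $d_t$ and $I_\varphi$ to transfer the local estimates on $X$ to the domain $\Omega = \bigcup_{i=1}^{N} g_i F_\varphi$. First I would observe that the pointwise inequality $I_\varphi \leq d_t$ lifts verbatim from Lemma 3.2, since its proof depends only on the local representation $ds_{g_t}^2 = \frac{1}{2}(\cosh G(t)+1)d\xi^2 + \frac{1}{2}(\cosh G(t)-1)d\eta^2$ in natural coordinates of $\widetilde{\varphi}$.

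For the reverse direction, I would fix $x_1, x_2 \in \Omega$ and perform exactly the four-step deformation from the proof of Theorem 3.1: straighten an arc from $x_1$ to $x_2$ to a $|\widetilde{\varphi}|$-geodesic $\Gamma_1$; convert $\Gamma_1$ to a $\widetilde{\varphi}$-staircase curve $\Gamma_2$ outside the $\varepsilon$-neighborhoods of lifts of zeros; drag the horizontal measure across those neighborhoods to obtain $\Gamma_3$; and replace the resulting short vertical arcs near lifts of zeros by $g_t$-geodesics of length at most $M\varepsilon$, as in Lemma 3.6. The $g_t$-length of the resulting curve $\Gamma_{4,t}$ decomposes into horizontal, vertical, and short-geodesic parts, which are bounded by the same computation as in the proof of Theorem 3.1.

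To make these estimates uniform over $\Omega \times \Omega$, I would exploit the following: the set $\Omega$ meets only finitely many lifts of punctures, hence only finitely many sink neighborhoods $U_{j}$ of $\widetilde{\varphi}$, and the complement $\Omega \setminus \bigcup_j U_j$ is compact. By $\Gamma$-equivariance of $\widetilde{\varphi}$, each $U_j$ is $|\widetilde{\varphi}|$-isometric to the sink neighborhood on $X$ that it covers, so Lemma 3.4 applies with constants independent of $j$. A $|\widetilde{\varphi}|$-geodesic between two points of $\Omega$ is contained in a slightly larger finite union of translates of $F_\varphi$ (by the same argument as in the proof of Proposition 5.4), so it passes through a uniformly bounded number $N_0$ of lifts of zeros and enters a uniformly bounded collection of sink neighborhoods. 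Combined with a uniform bound $K_0$ on horizontal and vertical measures inside the compact part of this slight enlargement of $\Omega$, the computation from the proof of Theorem 3.1 yields
\[
|d_t(x_1,x_2) - I_\varphi(x_1,x_2)| \leq (2K_0 + 4N_0 \varepsilon + 2)\,D_0\, e^{-\varepsilon \sqrt{t}} + N_0 M \varepsilon
\]
for every $x_1, x_2 \in \Omega$ and every sufficiently large $t$, where $D_0$ is the common constant from Lemmas 3.4 and 3.5. Choosing $\varepsilon$ small and then $t$ large gives the desired uniform convergence.

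The main obstacle I anticipate is controlling the $|\widetilde{\varphi}|$-geodesic between points of $\Omega$ when one or both endpoints lie deep inside a sink neighborhood, since there the $|\widetilde{\varphi}|$-distance itself is unbounded. The resolution should hinge on the half-plane and horizontal-strip decomposition of each sink: within these Euclidean pieces the geodesic is a straight line segment, and Lemma 3.4 already provides the exponential decay of $G(t)$ along such lines, so the staircase approximation continues to apply with uniform constants as the endpoints recede toward the pole.
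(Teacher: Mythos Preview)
Your outline is correct and follows the same four-step deformation strategy as the paper; the resulting inequality is the same one the paper obtains. Two points of comparison are worth noting.

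First, the paper avoids your ``slightly larger finite union'' argument by reducing to a \emph{single} copy of $F_\varphi$: since $F_\varphi$ is $|\widetilde{\varphi}|$-convex (it is bounded by $|\varphi|$-geodesic ideal arcs), the $|\widetilde{\varphi}|$-geodesic $\Gamma_1$ between any $x_1,x_2\in F_\varphi$ already lies in $F_\varphi$, so the uniform constant $K$ comes from the compactness of $F_\varphi\setminus\bigcup_i\pi^{-1}(U_i)$ and the bound $N$ on zeros is just the number of zeros in $F_\varphi$. Your approach also works, but the justification you cite (``the same argument as in the proof of Proposition~5.4'') does not actually supply it; what you need is that the translates of $F_\varphi$ form a tree of convex tiles, so the geodesic between tiles $g_iF_\varphi$ and $g_jF_\varphi$ visits only the translates along the tree-path from $g_i$ to $g_j$, a set of uniformly bounded size.

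Second, your assertion that each lifted sink neighborhood $U_j$ is $|\widetilde{\varphi}|$-isometric to the sink neighborhood on $X$ it covers is false when the underlying puncture is a pole of order exactly $2$: there $U_i\setminus\{p_i\}$ is a Euclidean cylinder and its preimage components are half-planes, not isometric copies. The paper treats this case with a separate (but entirely analogous) estimate, using that along a horizontal or vertical ray in such a half-plane the $|\widetilde{\varphi}|$-distance to $\widetilde{\varphi}^{-1}(0)$ grows at least like $C+s\sin\theta$ for a fixed angle $\theta$, which still yields the exponential bound $De^{-C\sqrt{t}}$. You should insert this adjustment; otherwise the argument is complete.
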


\begin{proof}
    For simplicity, we show the uniform convergence of the distance functions on a single fundamental domain $F_\varphi$.
    The method of the proof is almost the same as \Cref{theorem:IdealPolygonsCase}.
    However, for the sake of convenience for readers, we see each step one by one.

    The following lemma is shown in the same manner as \Cref{lemma:IntersectionNumbersBoundedFromDistance}
    \begin{lemma}
        For each $t>0$, the inequality $I_\varphi<d_t$ holds on $F_{\varphi}$.
    \end{lemma}

    Next, we estimate infinite vertical or horizontal arcs of $\varphi$ around each puncture.

    \begin{lemma}\label{lemma:GeneralCaseestimateForInfiniteLines}
        Let $p_i$ be a puncture of $X$ and $U_i$ be a sufficiently small sink neighborhood of $p_i$ with respect to $\varphi$.
        We suppose that the $|\varphi|$-length of any nontrivial loop around $p_i$ in $U_i\setminus \{p_i\}$ is at least $2C$ and $\partial U_i$ is of distance at least $C$ away from zeros of $\varphi$ in the $|\varphi|$-metric.
        Then, there exists a sufficiently large $t_0>0$ such that, for every $t>t_0$, there exists a constant $D$ depending only on $\varphi$ such that the following hold:
        \begin{enumerate}
            \item Let $H\colon [0,\infty)\to \widetilde{X}$ be a horizontal line of $\widetilde{\varphi}$ (see \Cref{lemma:estimateForInfintiteLines}).
            We suppose that $H$ is parametrized by the $|\widetilde{\varphi}|$-arc length and the image of $\pi\circ H$ is contained in $U_i$.
            Then, for each $a\geq 0$,
            \begin{equation}\label{eq:HorizontalInfiniteSegment2}
                \ell_{g_t}(H|_{[0,a]})-i(\mathcal{F}_{\widetilde{\varphi},\mathrm{vert}},H|_{[0,a]}) <De^{-C\sqrt{t}}.
            \end{equation}
            \item Let $V\colon [0,\infty)\to \widetilde{X}$ be a vertical line of $\widetilde{\varphi}$ (see \Cref{lemma:estimateForInfintiteLines}).
            We suppose that $V$ is parametrized by the $|\widetilde{\varphi}|$-arc length and $\pi\circ V$ is contained in $U_i$.
            Then, 
            \begin{equation}\label{eq:VerticalInfiniteSegment2}
                \ell_{g_t}(V) < De^{-C\sqrt{t}}.
            \end{equation}
        \end{enumerate}
    \end{lemma}
    \begin{proof}
        If $p_i$ is a higher order pole than $2$, the proof of \Cref{lemma:estimateForInfintiteLines} works here.
        Therefore, we only deal with the case that $p_i$ is a double pole of $\varphi$.
        In this case, we can take a neighborhood $U_i$ of $p_i$ so that $U_i\setminus \{p_i\}$ is isometric to a one-sided infinite Euclidean cylinder whose core length is $2C$.
        Let $\theta\in (0,\pi)$ be the angle between $H$ (or $V$) and $\pi^{-1}(\partial U_i)$ (\Cref{figure:AngleFromCore}).
        \begin{figure}
           \centering
           \begin{overpic}[scale=1.1]{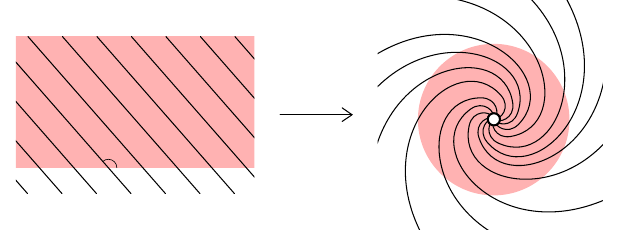}
               \put(49,19){$\pi$}
               \put(42,9){$\pi^{-1}(\partial U_i)$}
               \put(17,12){$\theta$}
               \put(85,4.5){$U_i$}
           \end{overpic}
           \caption{The solid lines denote the horizontal or vertical leaves}
           \label{figure:AngleFromCore}
        \end{figure}
        Then, the distance from $H(s)$ (or $V(s)$) to $\widetilde{\varphi}^{-1}(0)$ is at least $C+s\sin\theta$.
        Therefore, setting $B=\sinh^{-1}(\pi(4g-4+2n+\sum_im_i)/2\pi C^2)$, we have
        \begin{align*}
            \ell_{g_t}(V) &= \frac{1}{\sqrt{2}}\int_V \sqrt{\cosh G(t)-1} \, d\eta \\
            &\leq \frac{1}{\sqrt{2}}\int_V \sqrt{\cosh (Be^{-f\sqrt{t}})-1} \, d\eta & (\text{\Cref{proposition:estimateFromMinsky'sOne}}) \\
            &\leq \frac{A}{\sqrt{2}}\int_V e^{-f\sqrt{t}}\, d\eta & \text{\Cref{eq:BasicInequality}},
        \end{align*}
        where the final inequality holds if $t>(x_0/C)^2$, since $f\circ V(s)>C$.
        We now have 
        \begin{align*}
            \int_V e^{-f\sqrt{t}} d\eta &= \int_0^\infty e^{-f\circ V(s) \sqrt{t}}\,ds \\
            &\leq \int_0^\infty e^{-(C+s\sin\theta)\sqrt{t}} \,ds =\frac{e^{-C\sqrt{t}}}{\sqrt{t}\sin\theta}.
        \end{align*}
        In the same way as the proof of \Cref{lemma:estimateForInfintiteLines}, we can estimate the length of a horizontal line $H$.
        Therefore, we get the conclusion.
    \end{proof}
    By the same manner as \Cref{lemma:DiameterOfEpsilonNeighborhoodOfzero}, we have the following.
    \begin{proposition}\label{lemma:DiameterOfEpsilonNeighborhoodOfzero2}
        Let $p\in \widetilde{X}$ be a zero of $\widetilde{\varphi}$.
        We take $\varepsilon>0$ so that no zero of $\widetilde{\varphi}$ is contained in the $|\widetilde{\varphi}|$-ball $B_{|\widetilde{\varphi}|}(p,3\varepsilon)$ of radius $3\varepsilon$ centered at $p$.
        Then, for a sufficiently large $t$, the diameter of $B_{|\widetilde{\varphi}|}(p,\varepsilon)$ in the $g_t$-metric is at most $M\varepsilon$, where $M$ is a constant depending only on the degree of the zero $p$.
    \end{proposition}

    Finally, we move to the step of deforming a curve as in the proof of \Cref{theorem:IdealPolygonsCase} (as described in \Cref{fig:StepsOfTheDeformation2}).
    We fix a sufficiently small $0<\varepsilon<C$ so that, for any two zeros $z_i,z_j\in F_\varphi$ of $\widetilde{\varphi}$, the disks $B_{|\widetilde{\varphi}|}(z_i,2\varepsilon)$ and $B_{|\widetilde{\varphi}|}(z_j,2\varepsilon)$ have no intersection.
    We take any two points $x_1,x_2 \in F_\varphi$ and a path $\Gamma_0 \subset F_\varphi$ from $x_1$ to $x_2$.
    We deform $\Gamma_0$ via homotopy relative to endpoints.
    (Step 1) We take the $|\widetilde{\varphi}|$-geodesic representative $\Gamma_1 \subset F_\varphi$ in the homotopy class of $\Gamma_0$ rel.\ endpoints.
    Let $U_i (i=1,\ldots,n)$ be sink neighborhoods as in \Cref{lemma:GeneralCaseestimateForInfiniteLines}. 
    Since $F_\varphi\setminus\pi^{-1}(U_i)$ is compact, the total vertical and horizontal measure of $\Gamma_1 \cap (F_\varphi\setminus\pi^{-1}(U_i))$ is bounded from above by the constant $K$ independent of the choice of $x_1,x_2\in F_\varphi$.
    (Step 2) We deform $\Gamma_1$ to a $\varphi$-staircase curve $\Gamma_2$ outside the $\varepsilon$-neighborhood of zeros of $\widetilde{\varphi}$.
    Note that $\Gamma_2$ may not be contained in $F_\varphi$, however the total horizontal and vertical measures of $\Gamma_2$ are the same as  $\Gamma_1$.
    (Step 3) We drag out the horizontal measure of $\Gamma_2$ in the $\varepsilon$-neighborhood of each zero which $\Gamma_2$ passes through.
    For each dragging out operation, the horizontal measure remains the same, however the vertical measure increases by at most $4\varepsilon$.
    Since the number $N$ of zeros of $\widetilde{\varphi}$ in $F_\varphi$ bounds the number of zeros which $\Gamma_1$ passes through from above, the total of increased vertical measures is at most $4N\varepsilon$.
    Let $\Gamma_3$ be the resulting curve.
    (Step 4) We replace each vertical arc contained in $\varepsilon$-neighborhoods of zeros by the $g_t$-geodesic segment keeping the endpoints of the vertical arc.
    \Cref{lemma:DiameterOfEpsilonNeighborhoodOfzero2} implies the $g_t$-length of the geodesic is bounded by $M\varepsilon$.
    We denote the resulting curve by $\Gamma_{4,t}$.

    Similarly to the case of ideal polygons, $\Gamma_{4,t}$ can be decomposed into horizontal line parts $\Gamma_{\mathrm{hori}}$, vertical line parts $\Gamma_{\mathrm{vert}}$, and $g_t$-geodesics part $\Gamma_t$.
    From \Cref{lemma:GeneralCaseestimateForInfiniteLines}
    we find that the $g_t$-lengths of $\Gamma_{\mathrm{hori}}\cap \bigcup \pi^{-1}(U_i)$ and $\Gamma_{\mathrm{vert}}\cap \bigcup \pi^{-1}(U_i)$ is uniformly estimated.
    Therefore we have,
    \begin{align*}
        \ell_{g_t}(\Gamma_{\mathrm{hori}}) &= \ell_{g_t}(\Gamma_{\mathrm{hori}}\cap (X\setminus \bigcup_i \pi^{-1}(U_i)))+\ell_{g_t}(\Gamma_{\mathrm{hori}}\cap \bigcup \pi^{-1}(U_i)) \\
        &\leq i(\widetilde{\mathcal{F}}_{\varphi,\mathrm{vert}},\Gamma_{\mathrm{hori}}\cap (X\setminus \bigcup_i \pi^{-1}(U_i)))(1+De^{-\varepsilon\sqrt{t}}) \\
        &\quad +i(\widetilde{\mathcal{F}}_{\varphi,\mathrm{vert}},\Gamma_{\mathrm{hori}}\cap \bigcup_i \pi^{-1}(U_i))+De^{-\varepsilon \sqrt{t}} \\
        &\leq i(\widetilde{\mathcal{F}}_{\varphi,\mathrm{vert}},\Gamma_{\mathrm{hori}})+(K+1)De^{-\varepsilon\sqrt{t}} \\
        &=i(\widetilde{\mathcal{F}}_{\varphi,\mathrm{vert}},\Gamma_1)+(K+1)De^{-\varepsilon\sqrt{t}} \\
        &=I_\varphi(x_1,x_2)+(K+1)De^{-\varepsilon\sqrt{t}}.
    \end{align*}
    We also have
    \begin{align*}
        \ell_{g_t}(\Gamma_{\mathrm{vert}}) &=\ell_{g_t}(\Gamma_{\mathrm{vert}}\cap (X\setminus \bigcup \pi^{-1}(U_i)))+\ell_{g_t}(\Gamma_{\mathrm{vert}}\cap \bigcup\pi^{-1}(U_i)) \\
        &\leq De^{-\varepsilon\sqrt{t}}(K+4N\varepsilon)+De^{-C\sqrt{t}} \\
        &\leq De^{-\varepsilon\sqrt{t}}(K+4N\varepsilon+1).
    \end{align*}
    And, we have
    \begin{equation*}
        \ell_{g_t}(\Gamma_t)<NM\varepsilon.
    \end{equation*}
    Therefore, we obtain the desired conclusion.
\end{proof}

\begin{remark}
    In \Cref{lemma:estimateForInfintiteLines,lemma:estimateForCompactArcs,lemma:GeneralCaseestimateForInfiniteLines},
    we evaluated the asymptotic length of a horizontal arc $H$ or a vertical arc $V$ along a harmonic map ray.
    In fact, we find from \cite[p.470]{wolf1989harmonic} (see also \cite[Proposition 7.6]{pan2022ray}) that $\cosh G(t)$ monotonically decreases on the universal cover $\widetilde{X}$.
    Therefore, it follows that the $g_t$-length of $H$ or $V$ also monotonically decreases.
\end{remark}

\section{Convergence to the point in the Thurston boundary}

\subsection{The Thurston compactification}\label{subsection:TheThurstonBoundary}

We take $n$ open disks $D_1,\ldots,D_n$ in an oriented, closed surface of genus $g$. We suppose that the closures $\overline{D_1},\ldots,\overline{D_n}$ are pairwise disjoint. We remove $D_1,\ldots,D_n$ and $m_i\geq 0$ points on the boundary of each $D_i$ from the surface.
Let $S$ be the resulting surface.
In this section, we introduce the Thurston boundary of the Teichmüller space of the surface $S$ with closed boundaries or crown ends in a similar way to the paper \cite{alessandrini2016horofunction}, which defines the Thurston compactifications of the Teichmüller spaces for surfaces with only closed boundary.

An \textit{arc} on $S$ is the image of a closed interval by an embedding which maps the boundary of the interval to $\partial S$ and also the interior of the interval to $S\setminus \partial S$.
All homotopies of arcs are relative to $\partial S$.
An arc on $S$ is \textit{essential} if it is not homotopic to $\partial S$.

Let $\mathcal{C}$ denote the set of homotopy classes of essential simple closed curves on $S$, and $\mathcal{A}$ denote the set of homotopy classes of arcs on $S$.
We define the \textit{length function} $\ell_\ast\colon \mathcal{T}(S) \to \R_{\geq 0}^{\mathcal{C}\cup\mathcal{A}}$ as follows: for each $[C,f]\in\mathcal{T}(S)$, 
\begin{equation*}
    \ell_\ast([C,f])=(\ell_{[C,f]}([\gamma]))_{[\gamma]\in\mathcal{C}\cup \mathcal{A}},
\end{equation*}
where $\ell_{[C,f]}([\gamma])=\inf_{\gamma\in[\gamma]} \ell_C(f(\gamma))$.

We set $P(\R_{\geq 0}^{\mathcal{C}\cup\mathcal{A}})=(\R_{\geq 0}^{\mathcal{C}\cup\mathcal{A}}\setminus\{0\})/\R_{>0}$.
Let $\pi\colon \R_{\geq 0}^{\mathcal{C}\cup\mathcal{A}}\setminus\{0\}\to P(\R_{\geq 0}^{\mathcal{C}\cup\mathcal{A}})$ be the projection.
We endow $\R_{\geq 0}^{\mathcal{C}\cup \mathcal{A}}$ with the product topology and $P(\R_{\geq 0}^{\mathcal{C}\cup\mathcal{A}})$ with the quotient topology.

\begin{proposition}\label{proposition:LengthFunctionIsInjective}
    $\ell_\ast\colon \mathcal{T}(S)\to \R_{\geq 0}^{\mathcal{C}\cup\mathcal{A}}$ is injective.
\end{proposition}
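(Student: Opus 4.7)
The plan is to recover the marked hyperbolic structure from the values of $\ell_\ast$ by reconstructing Fenchel--Nielsen--type coordinates adapted to the presence of closed boundaries and crown ends. Suppose $\ell_{[C_1,f_1]}([\gamma])=\ell_{[C_2,f_2]}([\gamma])$ for every $[\gamma]\in\mathcal{C}\cup\mathcal{A}$; I want to produce an isometry $g\colon C_1\to C_2$ with $g\circ f_1\simeq f_2$.

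First, I would fix a topological decomposition of $S$ along a maximal collection of pairwise disjoint, pairwise non-homotopic simple closed curves in $\mathcal{C}$ together with arcs in $\mathcal{A}$ cutting $S$ into elementary pieces. When $S$ has only closed boundary, as in \cite{alessandrini2016horofunction}, the pieces are pairs of pants and right-angled hexagons; for a crown end, additional arcs with endpoints on the boundary arcs of the crown cut the end into crown pieces (degenerate right-angled hexagons having some ideal vertices at crown vertices). Each elementary piece is then rigidly determined by the lengths of a specified subset of its geodesic sides: pants by their three cuff lengths, right-angled hexagons by three alternating side lengths, and crown pieces by a standard finite set of arc/boundary lengths obtained from the classification of hyperbolic polygons with ideal vertices. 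Each such side corresponds to an element of $\mathcal{C}\cup\mathcal{A}$, so hypothesis forces the corresponding pieces of $C_1$ and $C_2$ to be isometric.

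Second, I would recover the gluing data. For each interior simple closed curve $\alpha$ in the decomposition there is a twist parameter $\tau_\alpha$; choosing a simple closed curve $\beta_\alpha$ that intersects $\alpha$ essentially and misses all other cuts, the function $\tau_\alpha\mapsto \ell(\beta_\alpha)$ is determined by, and together with auxiliary lengths (a trick curve used to resolve sign ambiguity) uniquely determines, $\tau_\alpha$. For each decomposing arc $\alpha$ there is an analogous shearing parameter along $\partial S$, which is recovered from an auxiliary arc $\beta_\alpha$ using the same monotonicity argument. Since the length of every such $\beta_\alpha$ lies in the data $\ell_\ast$, all twist and shear parameters agree for $[C_1,f_1]$ and $[C_2,f_2]$.

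Once the pieces and the gluing parameters coincide, the two marked hyperbolic structures are equal in $\mathcal{T}(S)$. The main obstacle will be the crown ends: one has to check that the chosen arcs actually suffice to decompose each crown into pieces whose geometry is read off from $\mathcal{A}$, and that the shearing along an arc ending on a (punctured) boundary arc of a crown is detected by a length of some essential arc rather than by a width parameter that lives outside $\mathcal{C}\cup\mathcal{A}$. This is essentially the adaptation of \cite[\S 4]{alessandrini2016horofunction} to the mixed boundary/crown setting, and it uses only that the classification of hyperbolic polygons with a prescribed pattern of finite and ideal vertices is determined by finitely many side lengths.
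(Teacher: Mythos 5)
Your overall strategy (decompose, pin down the pieces by lengths in $\mathcal{C}\cup\mathcal{A}$, then recover the gluing parameters from auxiliary lengths via convexity) is the same as the paper's, but the step you yourself flag as ``the main obstacle'' --- the crown ends --- is exactly the new content of this proposition, and your proposed mechanism for it does not work as stated. First, in your polygon decomposition the sides whose lengths you would need are largely \emph{not} elements of $\mathcal{C}\cup\mathcal{A}$: seams between two interior pants curves, sub-segments of the decomposition curves, and sub-segments of the crown's infinite boundary geodesics are not closed curves and are not arcs with endpoints on $\partial S$, so their lengths are not part of the data $\ell_\ast$. (For instance, cutting a crown along orthogeodesic arcs encircling the spikes leaves a central annular piece whose boundary contains segments of the crown sides; its geometry is not read off from arc lengths alone in any ``hexagon classification'' way.) Second, your ``shearing parameter along a decomposing arc'' is not the right object: regluing along a compact orthogeodesic arc with endpoints on $\partial S$ is rigid (an isometry of a segment matching the endpoints is unique), so there is no modulus there; the actual extra moduli of a crown end sit in the relative position of the spikes around the core, equivalently in the twist along the closed curve separating the crown from the rest of the surface, and you give no argument that these are detected by $\ell_\ast$.

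For comparison, the paper avoids cutting along arcs altogether: it decomposes along $3g-3+b+2c$ closed curves into pairs of pants and crowns, asserts that a crown with $m_i$ boundary cusps is determined by the lengths of $m_i$ specific essential arcs inside it, and invokes \cite[Lemma 2.16]{gupta2021harmonic} to say that these piece data together with the twist parameters along the decomposition curves determine the structure. Twists along curves shared by two pants are recovered as in \cite{fathi2012thurston}; for a curve bounding a crown, the twist is recovered from an arc $[\alpha]$ crossing it and its Dehn-twist image $[\alpha']$, using strict convexity of $\theta\mapsto\ell_{C_\theta}([\alpha])$ under twisting. To repair your argument you would need to supply these two ingredients (or equivalents): a proof that finitely many arc lengths in $\mathcal{A}$ determine the hyperbolic structure of a crown, and a convexity/properness argument showing the twist along a crown-bounding curve is determined by lengths in $\mathcal{C}\cup\mathcal{A}$; as written, both are deferred rather than proved.
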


\begin{proof}
    A \textit{crown} is an annulus with finite points removed from one of its boundaries.
    If we cut a hyperbolic surface $C\in \mathcal{T}(S)$ along disjoint $3g-3+b+2c$ essential simple closed curves, then $C$ is decomposed into pairs of pants and crowns, where $b$ is the number of closed boundaries of $C$ and $c$ is the number of crown ends of $C$.
    Let $\mathcal{K}$ be the system of curves giving the decomposition.
    As well known, the hyperbolic structure of the pair of pants is determined by the boundary lengths. 
    For the crown with $m_i$ boundary cusps, its hyperbolic structure is determined by the lengths of $m_i$ arcs contained in the crown (see \Cref{figure:Curves}).
    \begin{figure}
       \centering
       \begin{overpic}[]{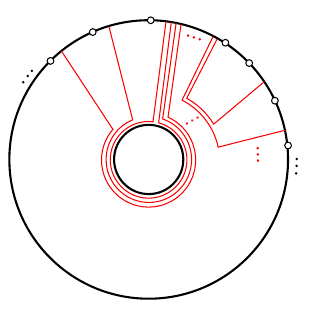}
           \put(46,96){1}
           \put(72,89){2}
           \put(81,81){3}
           \put(89,69){4}
           \put(94,52){5}
           \put(23,93){$m_i$}
           \put(-7,81){$m_i-1$}
       \end{overpic}
       \caption{The lengths of the red $m_i$ curves determine the hyperbolic structure on the crown with $m_i$ boundary cusps.}
       \label{figure:Curves}
    \end{figure}

    Additionally, if we determine the twist parameters associated with each curve in $\mathcal{K}$, the hyperbolic structure of $C$ is completely determined \cite[Lemma 2.16]{gupta2021harmonic}.

    We pick any $[\gamma]\in \mathcal{K}$.
    If $[\gamma]$ is the common boundaries of two (possibly same) pairs of pants, the twist parameter associated with $[\gamma]$ is determined by certain two curves intersecting with $[\gamma]$ (see \cite{fathi2012thurston}).
    We consider the case that $[\gamma]$ is the boundary of a crown.
    To detect the twist parameter along $[\gamma]$, we take any arc $[\alpha]\in \mathcal{A}$ intersecting with $[\gamma]$ (\Cref{figure:HyperbolicStructureOfCrown}).
    \begin{figure}
       \centering
       \begin{overpic}[scale=0.7]{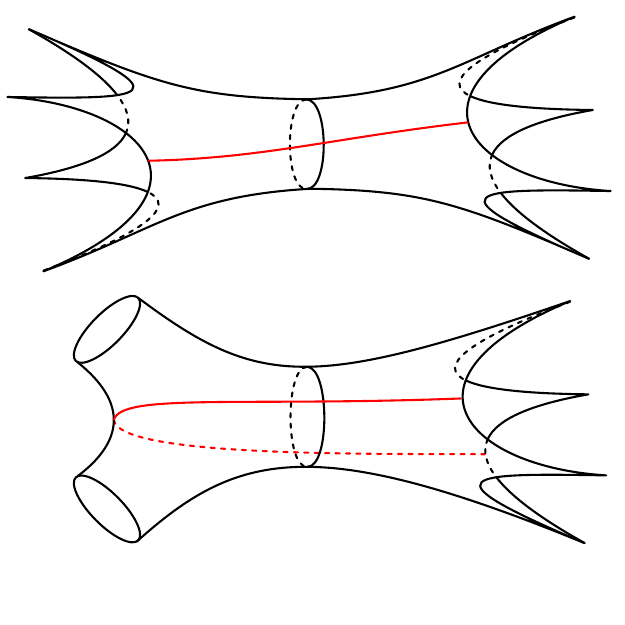}
           \put(47,89){$[\gamma]$}
           \put(47,44){$[\gamma]$}
       \end{overpic}
       \caption{The red curves denote $[\alpha]$.}
       \label{figure:HyperbolicStructureOfCrown}
    \end{figure}
    Let $[\alpha']$ be the arc obtained from $[\alpha]$ by a single Dehn twist along $[\gamma]$.
    Let $C_\theta$ be the hyperbolic surface which is twisted by $\theta\in \R$ along $[\gamma]$.
    By direct calculation on the geometry of hyperbolic plane, the map given by $\theta\mapsto \ell_{C_\theta}([\alpha])$ is strictly convex.
    Therefore, by the same discussion as \cite{fathi2012thurston}, we can detect the twist parameter along $[\gamma]$ by the lengths of $[\alpha]$ and $[\alpha']$.
    Thus, the lengths of $9g-9+b$ essential simple closed curves and $\sum_{i} m_i+2c$ arcs determine the hyperbolic structure of $C$.
\end{proof}
By a similar argument in \cite{alessandrini2016horofunction,liu2010lengthspectra}, the composition $\pi\circ \ell_\ast$ is still injective. Therefore, it follows that $\mathcal{T}(S)$ is embedded into $P(\R_{\geq 0}^{\mathcal{C}\cup\mathcal{A}})$ by $\pi\circ \ell_\ast$.
\begin{definition}
    The closure of the image $\pi\circ \ell_\ast(\mathcal{T}(S))$ is called the \textit{Thurston compactification} of the Teichmüller space of $S$.
    We denote it by $\overline{\mathcal{T}(S)}$.
\end{definition}
Let $S^d$ denote the double of $S$. Then, $S^d$ is a closed surface of genus $2g+n$ with $\sum_{i=1}^n m_i$ punctures.
The Teichmüller space $\mathcal{T}(S)$ is naturally identified with the invariant subset $\mathcal{T}^{\mathrm{sym}}(S^d)$ of the Teichmüller space $\mathcal{T}(S^d)$ of $S^d$ by the involution along the axis of the double.
Since the Thurston compactification $\overline{\mathcal{T}(S^d)}$ of the Teichmüller space $\mathcal{T}(S^d)$ is compact and $\overline{\mathcal{T}^{\mathrm{sym}}(S^d)}$ is a closed subset of $\overline{\mathcal{T}(S^d)}$, we find that $\overline{\mathcal{T}(S)}$ is a compact set.

A \textit{measured foliation} on $S$ is a pair of singular measured foliation on $S$ and transverse measure on it, where a singular measured foliation has singularities as in \Cref{figure:Singularities}.
We define an equivalence relation on the space of measured foliations on $S$ by isotopy and Whitehead moves (see \cite{fathi2012thurston} for more details).
\begin{figure}
   \centering
   \begin{overpic}[scale=0.8]{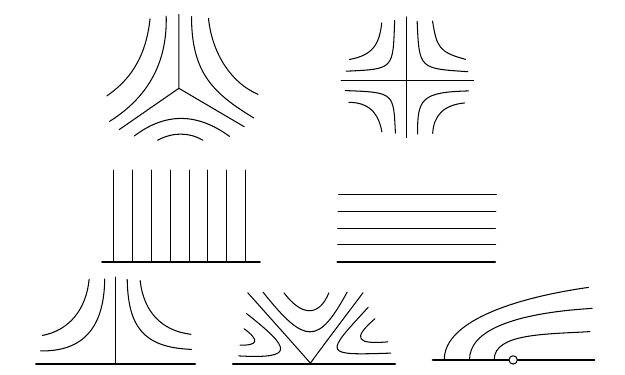}
   \end{overpic}
   \caption{Top: 3 and 4 pronged singularities. Middle: foliation around a boundary. Bottom left and middle: singularities on a boundary. Bottom right: around a removed point on a boundary, there is such a singularity.}
   \label{figure:Singularities}
\end{figure}
Let $\mathcal{MF}(S)$ denote the quotient space of measured foliations on $S$ by the equivalence relation.

Let $X$ be a closed Riemann surface of genus $g$ with $n$ punctures and $\varphi$ be a holomorphic quadratic differential on $X$ with poles of order $m_i+2$ at each puncture $p_i$.
By the way introduced in \Cref{subsection:MeasuredFoliationAndDualR-tree}, we obtain the vertical measured foliation $\mathcal{F}_{\varphi,\mathrm{vert}}$ of $\varphi$ on $X$, which is a \textit{measured foliation with pole singularities} (see \cite{gupta2017meromorphic,gupta2019meromorphic}).
By blowing up each $p_i$, removing points corresponding to asymptotic horizontal direction $\varphi$, and collapsing leaves in half-plane domains of $\varphi$ to removed points, we obtain a measured foliation on $S$ from $\mathcal{F}_{\varphi,\textrm{vert}}$.
The measured foliation on $S$ is denoted by the same symbol $\mathcal{F}_{\varphi,\mathrm{vert}}\in \mathcal{MF}(S)$

Similarly to $\mathcal{T}(S)$, we can identify $\mathcal{MF}(S)$ with the subset $\mathcal{MF}^{\mathrm{sym}}(S^d)\subset \mathcal{MF}(S^d)$ consisting of the invariant measured foliations by the action of the involution of the double.
For a measured foliation $\mathcal{F}=(\mathcal{F},\mu) \in\mathcal{MF}(S)$, we define a function $I_\ast (\mathcal{F})$ on $\mathcal{C}\cup\mathcal{A}$ by, for $[\gamma]\in\mathcal{C}\cup\mathcal{A}$,
\begin{equation*}
    I_\ast (\mathcal{F})([\gamma])=\inf_{\gamma\in[\gamma]}\mu(\gamma).
\end{equation*}
The image of $\mathcal{MF}(S)$ by $\pi\circ I_\ast$ is called the \textit{Thurston boundary}, and it is denoted by $\mathcal{PMF}(S) \subset P(\R_{\geq0}^{\mathcal{C}\cup\mathcal{A}})$.

Since the discussion in \cite{alessandrini2016horofunction} similarly works in our case, we have the following.
\begin{proposition}
    The boundary of $\overline{\mathcal{T}(S)}$ coincides with $\mathcal{PMF}(S)$.
    The Thurston compactification $\overline{\mathcal{T}(S)}$ is homeomorphic to a closed ball of dimension $6g-6+\sum_{i=1}^n(m_i+3)$.
\end{proposition}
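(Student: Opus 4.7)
My plan is to exploit the doubling identifications $\mathcal{T}(S)=\mathcal{T}^{\mathrm{sym}}(S^d)$ and $\mathcal{MF}(S)=\mathcal{MF}^{\mathrm{sym}}(S^d)$ already set up above and reduce everything to the classical Thurston compactification theorem for the punctured closed surface $S^d$, on which $\overline{\mathcal{T}(S^d)}$ is a closed topological ball with boundary $\mathcal{PMF}(S^d)$. The idea is to recognise $\overline{\mathcal{T}(S)}$ as the fixed locus of the involution $\sigma$ of $S^d$ acting on $\overline{\mathcal{T}(S^d)}$, and then both identify its boundary and apply Smith theory to conclude it is a ball.

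First I would make precise the doubling correspondence on the length-spectrum side: every arc class $[\alpha]\in\mathcal{A}$ on $S$ doubles to a $\sigma$-invariant simple closed curve class $[\hat{\alpha}]$ on $S^d$ with $\ell_{C^d}([\hat{\alpha}])=2\ell_{C}([\alpha])$, while each class $[\gamma]\in\mathcal{C}$ either yields a $\sigma$-invariant curve or a $\sigma$-symmetric pair on $S^d$ whose lengths recover $\ell_C([\gamma])$. Combined with \Cref{proposition:LengthFunctionIsInjective}, this shows that the embedding $\pi\circ\ell_\ast\colon\mathcal{T}(S)\to P(\R_{\geq 0}^{\mathcal{C}\cup\mathcal{A}})$ is, up to a homeomorphism onto its image, the restriction to $\mathcal{T}^{\mathrm{sym}}(S^d)$ of the Thurston embedding $\mathcal{T}(S^d)\hookrightarrow P(\R_{\geq 0}^{\mathcal{C}(S^d)})$. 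Taking closures, $\overline{\mathcal{T}(S)}$ is naturally identified with the closure of $\mathcal{T}^{\mathrm{sym}}(S^d)$ inside $\overline{\mathcal{T}(S^d)}$, which by continuity of $\sigma$ is precisely the $\sigma$-fixed locus of $\overline{\mathcal{T}(S^d)}$.

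Next I would identify this fixed locus. Since $\sigma$ is a continuous involution of the closed ball $\overline{\mathcal{T}(S^d)}$, its fixed set is itself a closed topological ball by Smith theory; its interior is $\mathcal{T}^{\mathrm{sym}}(S^d)=\mathcal{T}(S)$, and its boundary is the intersection with $\mathcal{PMF}(S^d)$, namely $\mathcal{PMF}^{\mathrm{sym}}(S^d)$. The identification $\mathcal{MF}(S)=\mathcal{MF}^{\mathrm{sym}}(S^d)$, together with the compatibility of the intersection functions $I_\ast$ on $S$ and $S^d$ under doubling, then translates this into the boundary statement $\partial\overline{\mathcal{T}(S)}=\mathcal{PMF}(S)$. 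Finally, the dimension is read off directly from \Cref{theorem:ParametrizationResult}, giving $6g-6+\sum_{i=1}^n(m_i+3)$; one may alternatively obtain the same number by counting $\sigma$-invariant Fenchel–Nielsen coordinates on $S^d$ as in the proof of \Cref{proposition:LengthFunctionIsInjective}.

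The main obstacle I expect is verifying that the length-spectrum embedding on $S$ genuinely matches the $\sigma$-invariant restriction of the Thurston embedding on $S^d$, i.e.\ that projecting from $P(\R_{\geq 0}^{\mathcal{C}(S^d)})$ onto $P(\R_{\geq 0}^{\mathcal{C}\cup\mathcal{A}})$ is a homeomorphism on the $\sigma$-invariant locus and respects closures; this requires a careful bookkeeping of how symmetric and antisymmetric simple closed curves on $S^d$ restrict to $\mathcal{C}\cup\mathcal{A}$, but it is a direct doubling computation of exactly the kind carried out in \cite{alessandrini2016horofunction}. Once that is in place, the remaining ingredients (closedness of the symmetric locus and the Smith-theoretic fact that the involution fixed set of a ball is a ball) are standard.
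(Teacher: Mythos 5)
Your reduction to the double $S^d$ is a genuinely different route from the paper (which simply adapts the direct argument of \cite{alessandrini2016horofunction} on curves and arcs of $S$), but as written it has a gap at its central step. Smith theory does not assert that the fixed-point set of a continuous involution of a closed ball is a closed ball: it yields only homological information, namely that the fixed set of a $\Z/2$-action on a finite-dimensional, mod~$2$ acyclic compactum is nonempty and mod~$2$ acyclic in \v{C}ech cohomology. Fixed sets of involutions of disks need not be manifolds at all (by converse results to Smith theory, essentially any mod~$2$ acyclic compact polyhedron occurs as such a fixed set), so the sentence ``its fixed set is itself a closed topological ball by Smith theory'' assumes exactly what the proposition claims; neither the ball structure, nor its dimension, nor the assertion that the topological boundary of the fixed set equals $\operatorname{Fix}(\sigma)\cap\mathcal{PMF}(S^d)$ follows from Smith theory. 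These facts would have to be established by analyzing the local structure of $\overline{\mathcal{T}(S)}$ near boundary points, which is precisely the content of the argument in \cite{alessandrini2016horofunction} that the paper invokes.

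There is a second unproved identification: you claim that, by continuity of $\sigma$, the closure of $\mathcal{T}^{\mathrm{sym}}(S^d)$ in $\overline{\mathcal{T}(S^d)}$ is precisely the $\sigma$-fixed locus. Continuity only gives the inclusion $\overline{\mathcal{T}^{\mathrm{sym}}(S^d)}\subset\operatorname{Fix}(\sigma)$, since the fixed locus is closed and contains $\mathcal{T}^{\mathrm{sym}}(S^d)$. The reverse inclusion requires showing that every $\sigma$-fixed point of $\mathcal{PMF}(S^d)$ is the projective class of a foliation admitting a genuinely $\sigma$-invariant representative (an equivariant realization statement, since invariance a priori holds only up to isotopy and Whitehead moves), and moreover that each such class is a limit of $\sigma$-symmetric hyperbolic structures; without this, even a correct ``fixed set is a ball'' statement would not identify $\partial\overline{\mathcal{T}(S)}$ with all of $\mathcal{PMF}(S)$. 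The first part of your plan (matching length and intersection spectra under doubling, e.g.\ $\ell_{C^d}([\hat\alpha])=2\ell_C([\alpha])$) is fine and is indeed the kind of computation carried out in \cite{alessandrini2016horofunction}, but the two steps above carry the actual content of the proposition and are missing. (A minor point: \Cref{theorem:ParametrizationResult} parametrizes the fixed-length-data slice $\mathcal{T}_L(S)$, not $\mathcal{T}(S)$, so the dimension should be quoted from the formula in \Cref{section:TeichmullerSpace} rather than from that theorem.)
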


\subsection{The limit along a ray in the Thurston boundary}

In this section, we describe the convergence point of the family $\{C_{t\varphi}\}$ in terms of a length function of the isotopy classes of essential simple closed curves and properly embedded, essential arcs.

Let $S$ be the same surface as in \Cref{subsection:TheThurstonBoundary}.
Let $X$ be a closed Riemann surface of genus $g$ with $n$ punctures $p_1,\ldots,p_n$ and let $\varphi$ be a holomorphic quadratic differential on $X$.
As in the previous section, we suppose that $\varphi$ has a pole of order $m_i+2\ (m_i\geq 0)$ at each puncture $p_i$.
We fix a marking $f\colon X\to S\setminus \partial S$ from $(X,\theta(\varphi))$, where $\theta(\varphi)$ is the direction data determined from the principal part of $\varphi$ (see \Cref{section:TeichmullerSpace,subsection:DirectionDataObtainedFromPincipalParts}).
Let $\alpha\colon(-\infty,\infty)\to X$ be a smooth simple arc connecting two punctures $p_i,p_j$ of $X$, namely, $\lim_{t\to\infty}\alpha(t)=p_i, \lim_{t\to-\infty}\alpha(t)=p_j$.
If the unit tangent vectors along $\alpha$ converge to unit tangent vectors at the punctures as $t\to\pm\infty$ and the limits are not in the direction data $\theta(\varphi)$, we say that $\alpha$ has \textit{generic directions}.
Then, the $f$-image of $\alpha$ with its endpoints is a properly embedded arc into $S$ (see \Cref{figure:ArcConnectingBoundary}).
\begin{figure}
   \centering
   \begin{overpic}[scale=0.8]{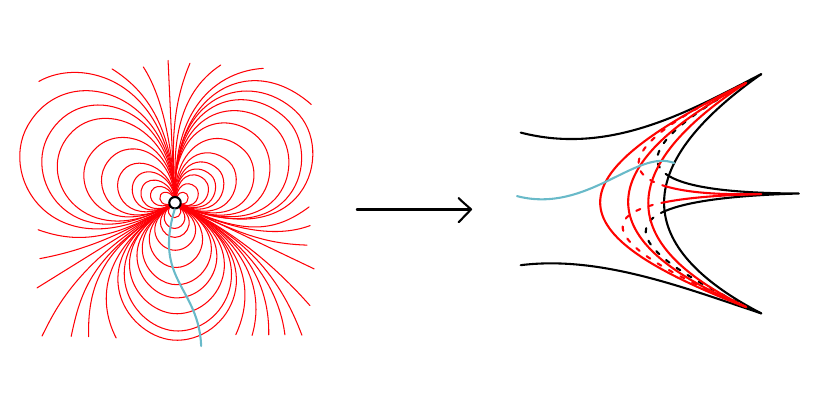}
       \put(49,27){$f$}
   \end{overpic}
   \caption{Since the directions of tangent vectors of the blue arc $\alpha$ converge to a point not in the asymptotic directions of the horizontal measured foliation given by $\varphi$, the arc $\alpha$ is mapped to a properly embedded arc by $f$.}
   \label{figure:ArcConnectingBoundary}
\end{figure}
Let $\alpha_1,\alpha_2\colon(-\infty,\infty)\to X$ be smooth arcs having generic directions.
The arc $\alpha_1$ is \textit{isotopic to} $\alpha_2$ \textit{relative to the direction data} $\theta$, if there exists an isotopy $\{\alpha_t\}$ from $\alpha_1$ to $\alpha_2$ such that the tangent vector of $\alpha_t$ at each of the endpoints always belongs to the same sector between two adjacent directions in $\theta(\varphi)$.
The set $\mathcal{A}$ is naturally identified with the set of isotopy classes (rel.\ the direction data) of essential simple arcs having generic directions.

From the vertical measured foliation $\mathcal{F}_{\varphi,\mathrm{vert}}$, we define a map $I_\ast \mathcal{F_{\varphi,\mathrm{vert}}}\colon \mathcal{C}\cup\mathcal{A}\to \R_{\geq 0}$ by
\begin{equation*}
    I_\ast \mathcal{F}_{\varphi,\mathrm{vert}}([\gamma])=\inf_{\gamma\in[\gamma]} i(\mathcal{F}_{\varphi,\mathrm{vert}},\gamma).
\end{equation*}
The following is a corollary derived from \Cref{theorem:GeneralCases}.
\begin{corollary}\label{corollary:PointInBoundary}
    For each $t>0$, let $[C_{t\varphi},f_{t\varphi}]\in\Teich(S)$ be the hyperbolic surface with $\Psi([C_{t\varphi},f_{t\varphi}])=t\varphi$ (see \Cref{theorem:ParametrizationResult} for the definition of $\Psi$).
    Then,
    \begin{equation*}
        (4t)^{-1/2}\ell_\ast [C_{t\varphi},f_{t\varphi}]\to I_\ast\mathcal{F}_{\varphi,\mathrm{vert}}\in \R^{\mathcal{C}\cup\mathcal{A}}\ \ \ (t\to\infty).
    \end{equation*}   
\end{corollary}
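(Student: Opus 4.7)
The convergence in $\R^{\mathcal{C}\cup\mathcal{A}}$ is componentwise in the product topology, so it suffices to prove, for each class $[\gamma]\in\mathcal{C}\cup\mathcal{A}$, that $(4t)^{-1/2}\ell_{[C_{t\varphi},f_{t\varphi}]}([\gamma])\to I_\ast\mathcal{F}_{\varphi,\mathrm{vert}}([\gamma])$. My plan is to handle simple closed curves directly from \Cref{theorem:GeneralCases}, then reduce the case of arcs via the symmetric doubling already used in the paper to identify $\mathcal{T}(S)$ with $\mathcal{T}^{\mathrm{sym}}(S^d)$.

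For $[\gamma]\in\mathcal{C}$, pick a deck transformation $\gamma\in\Gamma$ in the corresponding conjugacy class. Since $h_{t\varphi}$ is an isometry from $(X,h_{t\varphi}^\ast C_{t\varphi})$ to $C_{t\varphi}\setminus\partial C_{t\varphi}$, the hyperbolic length of the geodesic representative is the translation length of $\gamma$ on $(\widetilde X,g_t)$ scaled by $\sqrt{4t}$, so
\[
(4t)^{-1/2}\ell_{[C_{t\varphi},f_{t\varphi}]}([\gamma])=\inf_{x\in\widetilde X}d_t(x,\gamma x),
\]
while the standard dual-$\R$-tree characterization gives $I_\ast\mathcal{F}_{\varphi,\mathrm{vert}}([\gamma])=\inf_{x\in\widetilde X}I_\varphi(x,\gamma x)$. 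The pointwise inequality $I_\varphi\leq d_t$ on $\widetilde X\times\widetilde X$, which is the general-case analog of \Cref{lemma:IntersectionNumbersBoundedFromDistance} that appears inside the proof of \Cref{theorem:GeneralCases}, yields $\inf_x d_t\geq\inf_x I_\varphi$. For the reverse bound, pick $x_0\in F_\varphi$ whose projection $\pr_\varphi(x_0)$ lies on the translation axis of $\gamma$ in $T(\varphi)$, so that $I_\varphi(x_0,\gamma x_0)=\inf_x I_\varphi(x,\gamma x)$. Since $F_\varphi\cup\gamma F_\varphi$ accumulates only at parabolic fixed points of $\Gamma$, which are disjoint from $\partial_\infty\Gamma$ in $\widehat{\mathbb H}$, the union lies in $\widetilde X\setminus U$ for some neighborhood $U$ of $\partial_\infty\Gamma$. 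Then \Cref{theorem:GeneralCases} gives $d_t(x_0,\gamma x_0)\to I_\varphi(x_0,\gamma x_0)$, which combined with the lower bound settles the closed-curve case.

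For $[\alpha]\in\mathcal{A}$, I would reduce to the closed-curve case by doubling. Let $\sigma\colon S^d\to S^d$ be the reflection involution; Schwarz reflection of $\varphi$ across the geodesic and ideal-polygonal boundary components of $C_{t\varphi}$ produces a $\sigma$-invariant meromorphic quadratic differential $\varphi^d$ on $X^d$ with poles of order $\geq 2$ at the punctures of $X^d$ (the doubled crown cusps), and the doubling of $h_{t\varphi}$ realizes the parametrization on $(X^d,\varphi^d)$, so \Cref{theorem:GeneralCases} applies there. The arc $\alpha$ doubles to the simple closed curve $\alpha^d=\alpha\cup\sigma(\alpha)$ on $S^d$, and by $\sigma$-symmetry,
\[
\ell_{C_{t\varphi}^d}([\alpha^d])=2\ell_{C_{t\varphi}}([\alpha]),\qquad I_\ast\mathcal{F}_{\varphi^d,\mathrm{vert}}([\alpha^d])=2I_\ast\mathcal{F}_{\varphi,\mathrm{vert}}([\alpha]).
\]
Applying the closed-curve case on $(X^d,\varphi^d)$ to $[\alpha^d]$ and dividing by $2$ yields the claim for $[\alpha]$.

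The main technical obstacle is the arc case: one must verify that the doubled data $(X^d,\varphi^d,h^d_{t\varphi^d})$ satisfies the hypotheses of \Cref{theorem:GeneralCases}, in particular that the Schwarz-reflected differential has the correct pole order at each doubled puncture and that the doubled map is harmonic in the paper's sense. An alternative avoiding doubling is to truncate a lifted $\varphi$-geodesic representative of $\alpha$ in $\widetilde X$ inside the sink neighborhoods of the end punctures, bound the contributions of the truncated tails by the exponential estimates of \Cref{lemma:GeneralCaseestimateForInfiniteLines}, and identify the bulk via the uniform convergence of \Cref{theorem:GeneralCases}; this stays entirely within the tools already developed in the paper but is computationally heavier.
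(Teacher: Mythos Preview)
Your treatment of closed curves is essentially the paper's argument: both use that $(4t)^{-1/2}\ell_{[C_{t\varphi},f_{t\varphi}]}([\gamma])=\inf_{x}d_t(x,g_{[\gamma]}x)$ over a fundamental domain and then invoke the uniform convergence of \Cref{theorem:GeneralCases} on finitely many $\Gamma$-translates of $F_\varphi$.

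For arcs, however, the doubling route has a genuine gap that is more serious than the verification you flag. The double $S^d$ is defined topologically, and $C_{t\varphi}^d$ is defined as a hyperbolic surface, but there is no Riemann surface $X^d$ independent of $t$ on which a doubled differential $\varphi^d$ lives. The Riemann surface $X$ has punctures, not real boundary, so there is nothing across which to Schwarz-reflect $\varphi$; the only conformal structure naturally available on the double is that of $C_{t\varphi}^d$ itself, which varies with $t$. Concretely, when $m_i=0$ the puncture $p_i$ corresponds to a closed geodesic in $C_{t\varphi}^d$ rather than to a puncture of $S^d$, so any holomorphic quadratic differential on the doubled Riemann surface must be regular there, whereas $\varphi$ has a pole of order $2$ at $p_i$; no Schwarz reflection can reconcile this. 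Consequently the hypothesis of \Cref{theorem:GeneralCases}, which requires a \emph{fixed} punctured Riemann surface and a ray $t\varphi$ on it, is not met by your doubled data, and the reduction of $[\alpha]\in\mathcal{A}$ to $[\alpha^d]\in\mathcal{C}(S^d)$ does not go through.

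The paper instead argues directly, and this is precisely the ``alternative'' you sketch at the end: for $[\alpha]\in\mathcal{A}$ with endpoints on boundary components $\partial_1,\partial_2$, one chooses small half-plane (or cylindrical) neighborhoods $P_1,P_2$ of the corresponding punctures, observes that any $C_{t\varphi}$-geodesic representative must cross both, obtains a lower bound $\inf_{x_i\in P_i}d_t(x_1,x_2)$, and an upper bound $\inf_{x_i\in P_i}d_t(x_1,x_2)+2De^{-\delta\sqrt t}$ from the exponential tail estimate of \Cref{lemma:GeneralCaseestimateForInfiniteLines}. Uniform convergence then gives $\inf_{x_i\in P_i}I_\varphi(x_1,x_2)=I_\ast\mathcal{F}_{\varphi,\mathrm{vert}}([\alpha])$. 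So your fallback is the correct argument; the doubling shortcut is not available in this setting.
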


\begin{proof}
    Let $\Gamma$ be a discrete free subgroup of $\operatorname{Isom}^+(\widetilde{X})$ with $X=\widetilde{X}/\Gamma$.
    First, we consider a closed curve $[\gamma]\in\mathcal{C}$.
    Let $g_{[\gamma]}\in \Gamma$ be the holonomy corresponding to the closed curve $[\gamma]$.
    Then the $\frac{1}{4t}C_{t\varphi}$-length of the geodesic representative of $[\gamma]$ is equal to 
    $\inf_{x\in F} d_t(x,g_{[\gamma]}x)$, where $F$ is a fundamental domain of $\Gamma$ which is bounded by lifts of hyperbolic (or Euclidean) geodesic ideal arcs.
    From \Cref{theorem:GeneralCases},
    we find that $d_t(x,g_{[\gamma]}x)$ uniformly converges to $I_\varphi (x,g_{[\gamma]}x)$ on $F$.
    Therefore, the infimum of $d_t(x,g_{[\gamma]}x)$ over $F$ also converges to the infimum of $I_\varphi(x,g_{[\gamma]}x)$ over $F$.
    Since $\inf_{x\in F}I_\varphi(x,g_{[\gamma]}x)=I_\ast\mathcal{F}_{\varphi,\mathrm{vert}}([\gamma])$, the corollary holds for $[\gamma]\in \mathcal{C}$.

    Next, we consider $[\gamma]\in \mathcal{A}$.
    We suppose that isotopy classes of the marking $f_{t\varphi}$ are always the same isotopy class of a marking $f$.
    Let $\partial_1, \partial_2$ be two boundary components connected by $f([\gamma])$.
    For each $i=1,2$,
    if we take a sufficiently small half-plane domain $P_i$ bounded by a horizontal line of $\varphi$ (or a Euclidean cylindrical domain $A_i$) which is corresponding to $\partial_i$, then the harmonic maps $h_{t\varphi}$ map $P_i$ (or $A_i$) to a neighborhood of $\partial_i$  (see \cite{gupta2021harmonic} and \Cref{figure:BehaviorOfMap}).
    \begin{figure}[t]
       \centering
       \begin{overpic}[scale=0.9]{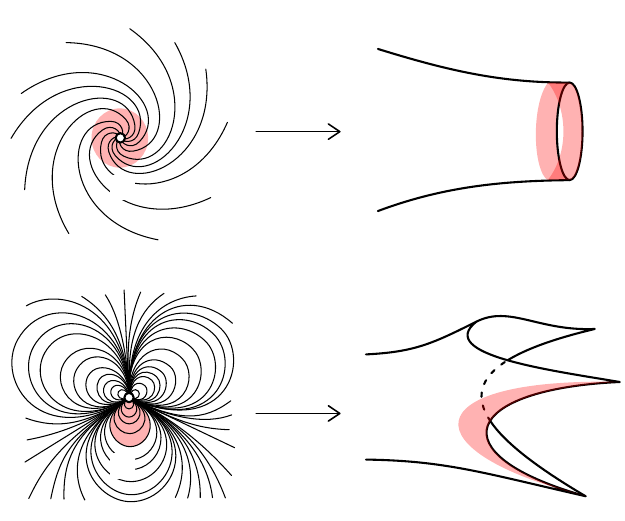}
           \put(18,8){$P_i$}
           \put(18,52){$A_i$}
           \put(45,20){$h_{t\varphi}$}
           \put(45,64.5){$h_{t\varphi}$}
       \end{overpic}
       \caption{The solid curves on the left side are horizontal leaves.}
       \label{figure:BehaviorOfMap}
    \end{figure}
    We can assume that the $|\varphi|$-distance from $P_i$ (or $A_i$) to $\varphi^{-1}(0)$ is at least $\delta>0$.
    For each $t>0$, the $C_{t\varphi}$-geodesic representative $\gamma_t\in [\gamma]$ should pass through $P_i$ (or $A_i$).
    Therefore, we have 
    \begin{equation}\label{eq:Inequality1}
        (4t)^{-1/2} \ell_{C_{t\varphi}} (f_{t\varphi}([\gamma])) \geq \inf _{\substack{x_1\in P_1 (\text{or}A_1), \\ x_2\in P_2 (\text{or}A_2) }} d_t(x_1,x_2).
    \end{equation}
    For each point $x\in P_i$ (or $A_i$), it follows from \Cref{lemma:GeneralCaseestimateForInfiniteLines} that the $g_t$-length of the vertical line starting from $x$ to the puncture is less than $De^{-\delta\sqrt{t}}$ for sufficiently large $t$.
    Therefore, we can connect each point $x\in P_i$ (or $A_i$) and the boundary $\partial_i$ by a curve whose length is less than $De^{-\delta \sqrt{t}}$.
    Thus, we have 
    \begin{equation}\label{eq:Inequality2}
        (4t)^{-1/2} \ell_{C_{t\varphi}} (f_{t\varphi}([\gamma])) \leq \inf _{\substack{x_1\in P_1 (\text{or}A_1), \\ x_2\in P_2 (\text{or}A_2) }} d_t(x_1,x_2)+2De^{-\delta\sqrt{t}}
    \end{equation}
    From \Cref{theorem:GeneralCases}, the distance $d_t(x_1,x_2)$ for two points $x_1\in P_1$ (or $A_1$), $x_2\in P_2$ (or $A_2$) uniformly converges to $I_\varphi(x_1,x_2)$.
    Therefore, from \Cref{eq:Inequality1,eq:Inequality2}, we have  
    \begin{equation*}
        (4t)^{-1/2} \ell_{C_{t\varphi}} (f_{t\varphi}([\gamma]))\to \inf _{\substack{x_1\in P_1 (\text{or}A_1), \\ x_2\in P_2 (\text{or}A_2) }} I_\varphi(x_1,x_2),
    \end{equation*}
    as $t\to\infty$.
    Since $I_\varphi(x_1,x_2)$ realizes the minimal intersection number $I_\ast \mathcal{F}_{\varphi,\mathrm{vert}}([\gamma])$, we obtain the conclusion.
\end{proof}

From \Cref{corollary:PointInBoundary} and the construction of the Thurston compactification, we have the following.

\begin{corollary}
    For each $t>0$, let $[C_{t\varphi},f_{t\varphi}]\in \mathcal{T}(S)$ be the marked hyperbolic surface with $\Psi([C_{t\varphi},f_{t\varphi}])=t\varphi$.
    Then, the hyperbolic surface $[C_{t\varphi},f_{t\varphi}]$ converges to the point $[\mathcal{F}_{\varphi,\mathrm{vert}}] \in \mathcal{PMF}(S)$ in the Thurston boundary as $t\to\infty$.
\end{corollary}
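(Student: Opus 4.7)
The plan is to derive this final statement directly from \Cref{corollary:PointInBoundary} together with the construction of the Thurston compactification in \Cref{subsection:TheThurstonBoundary}. By definition, the embedding $\mathcal{T}(S) \hookrightarrow P(\R_{\geq 0}^{\mathcal{C}\cup\mathcal{A}})$ is given by $\pi\circ\ell_\ast$, where $\pi$ is the projection $\R_{\geq 0}^{\mathcal{C}\cup\mathcal{A}}\setminus\{0\}\to P(\R_{\geq 0}^{\mathcal{C}\cup\mathcal{A}})$, and the boundary point $[\mathcal{F}_{\varphi,\mathrm{vert}}]\in\mathcal{PMF}(S)$ is by definition $\pi(I_\ast \mathcal{F}_{\varphi,\mathrm{vert}})$. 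Since $\pi$ is invariant under positive scaling, we have $\pi(\ell_\ast[C_{t\varphi},f_{t\varphi}]) = \pi((4t)^{-1/2}\ell_\ast[C_{t\varphi},f_{t\varphi}])$ for every $t>0$.

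Next I would invoke \Cref{corollary:PointInBoundary}: the vector $(4t)^{-1/2}\ell_\ast[C_{t\varphi},f_{t\varphi}]$ converges to $I_\ast \mathcal{F}_{\varphi,\mathrm{vert}}$ in $\R_{\geq 0}^{\mathcal{C}\cup\mathcal{A}}$ with the product topology. Since $\pi$ is continuous at every nonzero point of $\R_{\geq 0}^{\mathcal{C}\cup\mathcal{A}}$, the composition $\pi\circ(4t)^{-1/2}\ell_\ast[C_{t\varphi},f_{t\varphi}]$ then converges to $\pi(I_\ast\mathcal{F}_{\varphi,\mathrm{vert}}) = [\mathcal{F}_{\varphi,\mathrm{vert}}]$, provided we verify that $I_\ast\mathcal{F}_{\varphi,\mathrm{vert}}$ is not identically zero.

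The main (small) obstacle is this nonvanishing check. Since $\varphi$ has a pole of order $m_i+2\geq 2$ at each puncture, its sink neighborhoods contain half-plane or Euclidean cylindrical domains (in the notation of \Cref{subsection:GeometryOfHarmonicMaps} and \Cref{figure:BehaviorOfMap}) with positive horizontal width. Picking either an essential simple closed curve winding around such a cylindrical domain corresponding to a closed boundary (in the double pole case), or a properly embedded arc in $\mathcal{A}$ whose $|\varphi|$-geodesic representative traverses such a half-plane and has nonzero total horizontal variation $\int|d\xi|$, produces a class $[\gamma]\in\mathcal{C}\cup\mathcal{A}$ with $I_\ast\mathcal{F}_{\varphi,\mathrm{vert}}([\gamma])>0$. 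Hence $I_\ast\mathcal{F}_{\varphi,\mathrm{vert}}\neq 0$, continuity of $\pi$ at this point applies, and combining with the preceding identification yields
\begin{equation*}
    \pi\circ\ell_\ast[C_{t\varphi},f_{t\varphi}] \longrightarrow [\mathcal{F}_{\varphi,\mathrm{vert}}] \in \mathcal{PMF}(S)\subset \overline{\mathcal{T}(S)}
\end{equation*}
as $t\to\infty$, which is the desired convergence in the Thurston compactification.
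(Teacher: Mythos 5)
Your main deduction is exactly the paper's (implicit) argument: the paper derives this statement from \Cref{corollary:PointInBoundary} and the construction of the Thurston compactification without further detail, and your spelling-out --- scale invariance of the projection $\pi$, convergence of $(4t)^{-1/2}\ell_\ast[C_{t\varphi},f_{t\varphi}]$ in the product topology, continuity of $\pi$ at a nonzero limit --- is the intended way to fill that in.

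The one place where you go beyond the paper, the verification that $I_\ast\mathcal{F}_{\varphi,\mathrm{vert}}\neq 0$, contains a faulty step. For $[\gamma]\in\mathcal{A}$ the quantity $I_\ast\mathcal{F}_{\varphi,\mathrm{vert}}([\gamma])$ is an infimum over the whole isotopy class, so exhibiting one representative (your $|\varphi|$-geodesic traversing a half-plane) with $\int|d\xi|>0$ gives no positive lower bound. In fact the claim you are trying to verify can fail in the crown-end case: for $\varphi=z\,dz^2$ on $\C$ (ideal triangle) or $\varphi=z^2\,dz^2$ (ideal quadrilateral), every essential arc class has a representative contained in the singular vertical leaf, whose prongs tend to interior points of the boundary sides, so $I_\ast\mathcal{F}_{\varphi,\mathrm{vert}}\equiv 0$ even though arcs crossing a half-plane have positive horizontal variation (consistently, in these examples the ray $[C_{t\varphi},f_{t\varphi}]$ is constant by symmetry and does not converge to any boundary point, so nonvanishing genuinely is not automatic). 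Your closed-curve option in the double-pole case is correct, but the justification should be the representative-independent bound $\int_\gamma|d\xi|\geq\bigl|\Re\oint_\gamma\sqrt{\varphi}\bigr|=2\pi|\Im\Res(\varphi,p_i)|=|L_i|/2>0$. So the nonvanishing of $I_\ast\mathcal{F}_{\varphi,\mathrm{vert}}$ cannot be proved in general; it is an implicit hypothesis of the statement (it is what makes $[\mathcal{F}_{\varphi,\mathrm{vert}}]$ a well-defined point of $\mathcal{PMF}(S)$), and it should either be assumed, as the paper tacitly does, or established under an explicit condition (for instance, the presence of a double pole with $\Im\Res\neq 0$ as above). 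With that adjustment the rest of your argument goes through and coincides with the paper's.
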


\bibliographystyle{alpha}
\bibliography{DegenerationOfHyperbolicSurfacesWithBoundary}

\end{document}